\documentclass[12pt]{amsart}

\usepackage{amsmath}
\usepackage{amssymb}
\usepackage[curve]{xypic}
\usepackage{caption}
\usepackage{xspace}
\usepackage{cite}
\usepackage{enumitem}
\usepackage{url}
\usepackage{tikz}
\usepackage{color}
\usepackage{xcolor}
\usetikzlibrary{patterns}
\usepackage[section]{placeins}

\makeatletter 
\def\@cite#1#2{{\m@th\upshape\bfseries%
[{#1\if@tempswa{\m@th\upshape\mdseries, #2}\fi}]}}
\makeatother 

\theoremstyle{plain}
\newtheorem{thm}{Theorem}[section]
\newtheorem{cor}[thm]{Corollary}
\newtheorem{prop}[thm]{Proposition}
\newtheorem{lem}[thm]{Lemma}

\theoremstyle{definition}

\newtheorem{conj}[thm]{Conjecture}
\newtheorem{conv}[thm]{Convention}
\theoremstyle{remark}

\numberwithin{equation}{subsection}
\captionsetup{figurewithin=section}

\renewcommand{\bold}[1]{\medskip \noindent {\bf #1 }\nopagebreak}


\newcommand{\nc}{\newcommand}
\newcommand{\rnc}{\renewcommand}





\nc\bA{\mathbb{A}}
\nc\bB{\mathbb{B}}
\nc\bC{\mathbb{C}}
\nc\bD{\mathbb{D}}
\nc\bE{\mathbb{E}}
\nc\bF{\mathbb{F}}
\nc\bG{\mathbb{G}}
\nc\bH{\mathbb{H}}
\nc\bI{\mathbb{I}}
\nc{\bJ}{\mathbb{J}} 
\nc\bK{\mathbb{K}}
\nc\bL{\mathbb{L}}
\nc\bM{\mathbb{M}}
\nc\bN{\mathbb{N}}
\nc\bO{\mathbb{O}}
\nc\bP{\mathbb{P}}
\nc\bQ{\mathbb{Q}}
\nc\bR{\mathbb{R}}
\nc\bS{\mathbb{S}}
\nc\bT{\mathbb{T}}
\nc\bU{\mathbb{U}}
\nc\bV{\mathbb{V}}
\nc\bW{\mathbb{W}}
\nc\bY{\mathbb{Y}}
\nc\bX{\mathbb{X}}
\nc\bZ{\mathbb{Z}}
\nc\cA{\mathcal{A}}
\nc\cB{\mathcal{B}}
\nc\cC{\mathcal{C}}
\rnc\cD{\mathcal{D}}
\nc\cE{\mathcal{E}}
\nc\cF{\mathcal{F}}
\nc\cG{\mathcal{G}}
\rnc\cH{\mathcal{H}}
\nc\cI{\mathcal{I}}
\nc{\cJ}{\mathcal{J}} 
\nc\cK{\mathcal{K}}
\rnc\cL{\mathcal{L}}
\nc\cM{\mathcal{M}}
\nc\cN{\mathcal{N}}
\nc\cO{\mathcal{O}}
\nc\cP{\mathcal{P}}
\nc\cQ{\mathcal{Q}}
\rnc\cR{\mathcal{R}}
\nc\cS{\mathcal{S}}
\nc\cT{\mathcal{T}}
\nc\cU{\mathcal{U}}
\nc\cV{\mathcal{V}}
\nc\cW{\mathcal{W}}
\nc\cY{\mathcal{Y}}
\nc\cX{\mathcal{X}}
\nc\cZ{\mathcal{Z}}

\nc{\dmo}{\DeclareMathOperator}
\dmo{\Tw}{Twist}
\dmo{\CP}{Pres}
\rnc{\Re}{\operatorname{Re}}
\rnc{\Im}{\operatorname{Im}}
\rnc{\span}{\operatorname{span}}
\dmo{\rank}{rank}
\dmo{\End}{End}
\dmo{\Jac}{Jac}
\dmo{\Id}{Id}
\dmo{\lcm}{lcm}
\dmo{\Area}{Area}

\nc{\Tm}{Teichm\"uller\xspace}
\nc{\odd}{\cH^{\text{odd}}(4)}
\nc{\hyp}{\cH^{\text{hyp}}(4)}
\nc{\prym}{\tilde{\mathcal{Q}}(3,-1^3)}
\nc{\G}{GL^+(2,\bR)}


%

\begin{document}

\title[Orbit closures in $\odd$]{Classification of higher rank orbit closures in $\odd$}
%
\author[Aulicino]{David~Aulicino}
\address{\hspace{-0.5cm} Math\ Department\newline
University of Chicago\newline
5734 South University Avenue\newline
Chicago, IL 60637}
\email{aulicino@math.uchicago.edu}
\author[Nguyen]{Duc-Manh~Nguyen}
\address{\hspace{-0.5cm} IMB Bordeaux-Universit\'e de Bordeaux \newline
351, Cours de la Lib\'eration \newline
33405 Talence Cedex \newline FRANCE}
\email{duc-manh.nguyen@math.u-bordeaux1.fr}
\author[Wright]{Alex~Wright}
\address{\hspace{-0.5cm} Math\ Department\newline
University of Chicago\newline
5734 South University Avenue\newline
Chicago, IL 60637}
\email{alexmwright@gmail.com}
%

\begin{abstract}
The moduli space of genus 3 translation surfaces with a single zero has two connected components. We show that in the odd connected component $\odd$ the only $\G$ orbit closures are closed orbits, the Prym locus $\prym$, and $\odd$. 

Together with work of Matheus-Wright, this implies that there are only finitely many non-arithmetic closed orbits (\Tm curves) in $\odd$ outside of the Prym locus. 
\end{abstract}

\maketitle
\thispagestyle{empty}


\section{Introduction}\label{S:intro}

An affine invariant submanifold is a subset of a stratum of translation surfaces that is defined locally by real linear homogeneous equations in period coordinates (see Section 2 for a more precise definition). We say that an affine invariant submanifold $\cN$ is \emph{rank 1} if the only way to deform a translation surface in $\cN$ so that the deformed surface is also in $\cN$ is to combine the $\G$ action with  deformations that fix absolute periods. Otherwise we say $\cN$ is \emph{higher rank}.

In minimal strata $\cH(2g-2)$, rank 1 affine invariant submanifolds are closed orbits. The purpose of this paper is to show

\begin{thm}\label{T:main}
The only proper higher rank affine invariant submanifold of $\odd$ is the Prym locus $\prym$.  
\end{thm}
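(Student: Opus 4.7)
The plan is to classify proper higher-rank affine invariant submanifolds $\cN \subsetneq \odd$. Because $\cH(4)$ has a single zero, every affine invariant submanifold has vanishing rel, and hence $\dim_\bC \cN = 2\operatorname{rank}(\cN)$. A rank-$3$ submanifold would have the same dimension as the connected component $\odd$, contradicting properness, so $\cN$ must have rank exactly $2$ and $\dim_\bC \cN = 4$, matching $\dim_\bC \prym$. It therefore suffices to prove $\cN \subseteq \prym$, since equality then follows from dimensions.

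The main tool is Wright's cylinder deformation theorem. First I would produce a dense set of horizontally completely periodic surfaces in $\cN$, using standard density results (e.g.\ Smillie--Weiss) together with $\G$-invariance. On such $(X,\omega)$ the horizontal cylinders partition into $\cN$-equivalence classes, and the rank-$2$ hypothesis forces the cohomology classes of core curves to span only a $2$-dimensional subspace of $H^1(X;\bR)$. Combined with the known enumeration of cylinder diagrams in $\cH(4)$, this restricts the possible cylinder configurations realized by points of $\cN$ to a short finite list that can be handled case by case.

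For each candidate configuration I would shrink one $\cN$-equivalence class of cylinders to degenerate $(X,\omega)$ to the boundary of $\odd$, producing an affine invariant submanifold in a strictly smaller stratum such as $\cH(2)$, $\cH(1,1)$, or a disconnected lower-genus stratum. Using the already-known classifications in those strata together with dimension counting, one should be able to eliminate every configuration that is not also a degeneration of $\prym$. The surviving configurations should carry enough forced cylinder identifications---equal circumferences, heights, and moduli across equivalence classes---to exhibit a holomorphic involution $\tau$ on $(X,\omega)$ with $\tau^*\omega=-\omega$ and exactly four fixed points (one at the order-$4$ zero and three elsewhere). Such a $\tau$ realizes $(X,\omega)$ as the orientation double cover of an element of $\prym$, and density of periodic surfaces extends the inclusion $\cN \subseteq \prym$ to the whole invariant submanifold.

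The main obstacle, I expect, lies in this last step: cylinder identifications and local combinatorial symmetries do not automatically assemble into a genuine global holomorphic involution, and additional geometric input---most likely the odd spin parity of $\odd$, which excludes hyperelliptic accidents and constrains how the local pieces can glue---will be needed to rigidify the data. The boundary analysis itself will also be technical, since $\odd$ adjoins several different boundary strata, each of which requires its own lower-dimensional classification input before one can hope to discard configurations incompatible with the Prym picture.
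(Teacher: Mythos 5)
Your high-level skeleton matches the paper's: the rank must be $2$, so $\dim_\bC\cN=4=\dim_\bC\prym$; one reduces to horizontally periodic surfaces and a finite case analysis over the cylinder diagrams of $\odd$; and the endgame is to exhibit an involution with derivative $-1$ and four fixed points and conclude by a dimension count. But there are two genuine gaps. First, your elimination mechanism --- shrinking an $\cN$-equivalence class of cylinders to degenerate into $\cH(2)$, $\cH(1,1)$, or a disconnected stratum, and invoking lower-genus classifications there --- is not justified as stated: you give no argument that the limit of $\cN$ under such a degeneration is again an affine invariant submanifold of a boundary stratum, nor that its rank and dimension are controlled (this requires boundary theory not available here, and is not how the paper proceeds). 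The paper instead stays inside the stratum and kills the bad cases with quantitative consequences of the Cylinder Deformation Theorem, chiefly the identity $P(X,\cE)=P(Y,\cE)$ for $\cN$-parallel cylinders $X,Y$ and any equivalence class $\cE$ (Proposition \ref{P:P}), applied to vertical cylinders manufactured by shearing free cylinders and equivalence classes.

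Second, and more seriously, the step you yourself flag as ``the main obstacle'' --- upgrading forced cylinder identifications to a genuine global involution --- is exactly where the bulk of the work lies, and your proposed rigidifying input (odd spin parity) is not what does the job; parity enters only in restricting the list of cylinder diagrams. What actually forces the involution in the surviving cases O1B, O2B, O3B is a chain of area-proportion identities from Proposition \ref{P:P}: one cuts the surface along homologous vertical saddle connections into a vertical cylinder and two slit tori, shows that every cylinder in one slit torus has an $\cN$-parallel partner occupying the same area fraction of the other, and then proves a rigidity statement (Lemma \ref{L:P12}) that two slit tori with matching cylinder data are isometric; case O3B requires a separate two-model comparison of heights and twists. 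Also note the paper's closing step needs only a single surface in $\cN\cap\prym$ with irrational ratio of moduli (so its orbit closure is $4$-dimensional), not density of periodic surfaces carrying the involution. As written, your proposal identifies the right target but supplies no argument for the step that constitutes the actual proof.
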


By proper, we mean not equal to a connected component of a stratum. Together with recent work Eskin-Mirzakhani-Mohammadi, discussed below, Theorem \ref{T:main} gives that in $\odd$, every $\G$-orbit is either closed, dense in $\prym$, or dense in $\odd$. 

The Prym locus $\prym$ is the set of all holonomy double covers of genus 1 quadratic differentials with 1 zero of order 3 and 3 simple poles. (The holonomy double cover of a quadratic differential is a possibly branched double cover equipped with an abelian differential whose square is the pullback of the quadratic differential.) Equivalently, the Prym locus is the set of all translation surfaces in the minimal stratum in genus 3 that admit a flat involution with derivative -1 and 4 fixed points. It follows from work of Lanneau \cite{Lconn} (and, without too much work, the methods in this paper) that $\prym$ is connected. 

Below we will indicate an application of Theorem \ref{T:main} to finiteness of Teichm\"uller curves, and explain that it provides evidence for a conjecture of 
Mirzakhani. Theorem \ref{T:main} confirms this conjecture in the special case of $\odd$, which is the first connected component of a stratum analyzed so far that actually contains a higher rank affine invariant submanifold.  Now we give the context for our work. 

Ten years ago, McMullen and Calta independently found and studied infinitely many rank 1 affine invariant submanifolds in each of the two strata in genus 2 \cite{Ca,Mc} (they did not use this terminology). Prior to this, it had been known that almost every translation surface has dense orbit \cite{Ma2, V2}, but the only examples of proper orbit closures not coming from covering constructions were the non-arithmetic Teichm\"uller curves of Veech and Ward \cite{V,W}.


McMullen went on to classify orbit closures in genus two \cite{McM:spin, Mc4, Mc5}. A very particular consequence of his work is that no proper higher rank affine invariant submanifolds exist in genus 2.   

Some generalizations of McMullen's techniques were made in genus 3 \cite{HLM-AY, HLM-Q, N}, showing that some especially interesting translation surfaces have dense orbits. This made it reasonable to conjecture that few truly new orbit closures exist in genus greater than 2.   

Recently, progress has been obtained in arbitrary genus. 

\begin{thm}[Eskin-Mirzakhani-Mohammadi \cite{EM, EMM}]\label{T:EMM}
Every $\G$ orbit closure of a translation surface is an affine invariant submanifold.
\end{thm}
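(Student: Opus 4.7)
The plan is to reduce the classification of orbit closures to a classification of $P$-invariant ergodic probability measures, where $P \subset \G$ is the upper-triangular subgroup, and then to prove the measure classification by adapting techniques from homogeneous dynamics to the twisted setting of the stratum. The whole scheme has the same shape as Ratner's classification of unipotent orbit closures on homogeneous spaces, but the absence of a transitive group action on the stratum forces a substantially more intrinsic argument.

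The first and hardest step is to show that every ergodic $P$-invariant probability measure on a stratum is $\G$-invariant and affine, i.e.\ given in period coordinates as the Lebesgue measure on a real-linear subspace. Ratner's theorems do not apply directly: the affine structure on the stratum is twisted by the Kontsevich--Zorich cocycle, and there is no ambient Lie group whose unipotent subgroups generate the unstable foliation of the Teichm\"uller flow. I would instead pursue a random exponential drift or ``lambda lemma'' strategy: pick two generic $P$-trajectories that shadow each other for a random time determined by a stopping condition on the KZ cocycle, and analyze how the relative displacement in period coordinates evolves under repeated shadowing and separation. The Lyapunov filtration of the KZ cocycle is used to extract additional invariance of the conditional measures on unstable leaves beyond what $P$-invariance supplies directly. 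Controlling escape of mass in the non-compact moduli space requires the quantitative recurrence estimates of Eskin--Masur and Athreya.

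The second step is to pass from measures to orbit closures. Given $X$ with orbit closure $\overline{\G \cdot X}$, one produces $P$-invariant probability measures on $\overline{\G \cdot X}$ by averaging the point mass at $X$ along long pieces of $P$-orbit and extracting weak-$*$ limits, again using quantitative non-escape to prevent loss of mass at infinity. By the first step each such limit is supported on some affine invariant submanifold $M \subseteq \overline{\G \cdot X}$. A quantitative isolation and equidistribution theorem for affine invariant measures then upgrades this to $\overline{\G \cdot X} = M$: the argument proceeds by induction on dimension, using that nearby affine submanifolds cannot persistently repel trajectories without producing a contradictory limiting measure.

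The main obstacle is unquestionably the measure classification of the first step; the entire proof of Theorem~\ref{T:EMM} is built on top of it. The technical heart is coordinating the shadowing/drift mechanism with cocycle growth and compact-set returns simultaneously, in a setting where the unstable direction is only locally an affine group. The isolation and equidistribution argument in the second step is also delicate, but it is structurally closer to the standard homogeneous-dynamics playbook and can reasonably be approached once the measure classification is in hand.
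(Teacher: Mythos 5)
Theorem~\ref{T:EMM} is not proved in this paper at all: it is imported as a black box from \cite{EM, EMM}, and the rest of the paper only uses its statement. So the only meaningful comparison is with the strategy of those two references, and on that score your outline is an accurate pr\'ecis: \cite{EM} proves that every ergodic $P$-invariant probability measure is $\G$-invariant and affine, via an exponential-drift argument in the style of Benoist--Quint coordinated with the Lyapunov filtration of the Kontsevich--Zorich cocycle and the Eskin--Masur/Athreya recurrence estimates; \cite{EMM} then deduces the orbit-closure statement by averaging along $P$-orbits, an isolation/equidistribution property for affine invariant measures, and induction on dimension. Your description of where the difficulty sits (the measure classification, and within it the interplay of drift, cocycle growth, and returns to compact sets) is also correct.

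That said, what you have written is a roadmap, not a proof. Every step that carries actual content is named rather than carried out: the drift mechanism that extracts additional invariance of the leafwise conditional measures, the entropy/Lyapunov bookkeeping that converts that invariance into $\G$-invariance and affineness, and the avoidance estimates behind the isolation theorem each occupy a substantial part of a long paper, and none of them is reduced here to anything checkable. In the context of this exercise the honest verdict is that you have correctly identified the architecture of the known proof but supplied none of it; there is no error of approach, only the absence of the proof itself.
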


This result and its proof give almost no  information that might be useful in classifying affine invariant submanifolds.\footnote{Added in proof: After the completion of the present work, Filip proved that affine invariant submanifolds are varieties \cite{Fi1, Fi2}. This work \emph{does} give a great deal of  algebro-geometric information; however it is at this moment still unclear how to apply it to the classification problem.}

The linear equations that locally define an affine invariant submanifold may be taken to have coefficients in a number field \cite{Wfield}. However, the earlier work mentioned above suggests that affine invariant submanifolds  are incredibly special: most sets of linear equations with coefficients in a number field should \emph{not} locally define part of an affine invariant submanifold. The most precise such conjecture in this direction is due to Mirzakhani. We paraphrase it here.  

\begin{conj}[Mirzakhani]
Any higher rank affine invariant submanifold $\cN$ is either a stratum, or is ``not primitive" in the sense that every translation surface covers a quadratic differential of lower genus. 
\end{conj}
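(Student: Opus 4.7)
The plan is to argue by contradiction: suppose $\cN$ is a proper higher rank affine invariant submanifold that is not a stratum and in which no translation surface covers a quadratic differential of lower genus (``primitive''), and derive a contradiction by showing the tangent space to $\cN$ at some point equals that of the ambient stratum. Three tools would carry the argument: the cylinder deformation theorem of Wright, the field of affine definition $\bk(\cN)$, and an induction on complex dimension via boundary degenerations of $\cN$.

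First I would repackage the higher rank hypothesis as an abundance of independent cylinder-height deformations. Using density of horizontally periodic surfaces in $\cN$ (Smillie--Weiss together with Theorem \ref{T:EMM}), I would choose a well-adapted horizontally periodic $(X,\omega)\in\cN$ and apply the cylinder deformation theorem: each equivalence class of $\cN$-parallel cylinders contributes an independent real tangent direction to $\cN$. The higher rank assumption forces the tangent space at $(X,\omega)$ to strictly contain the $\G$-directions plus the absolute-period-preserving deformations, and the aim would be to push this bookkeeping until the tangent space must equal the full tangent space of the stratum, so that $\cN$ is a connected component by Theorem \ref{T:EMM}.

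Second, I would leverage the field of affine definition $\bk(\cN)$ as an obstruction to primitivity. When $\bk(\cN)$ strictly contains $\bQ$, Galois conjugation on period coordinates typically produces a nontrivial endomorphism structure on the Jacobian, and through the work of M\"oller (and of Filip, cited in the ``added in proof'' footnote above) this ought to force a covering construction that contradicts primitivity. Hence $\bk(\cN)=\bQ$, and combined with the higher rank hypothesis this should maximize the numerical rank and feed back into the cylinder deformation count from the first step. To induct on complex dimension one would use the boundary theory of Mirzakhani--Wright: collapse an equivalence class of cylinders to land in a lower-dimensional boundary AIS, apply the inductive hypothesis (each boundary piece is a stratum or non-primitive), and then reassemble to constrain $\cN$ itself.

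The hardest step is bridging primitivity, a global geometric condition on all of $\cN$, with the local/algebraic input from $\bk(\cN)$ and cylinder deformations. Making rigorous the implication ``primitive $\Rightarrow \bk(\cN)=\bQ$'' requires subtle Hodge-theoretic control at a generic surface in $\cN$, and the converse direction---that $\bk(\cN)=\bQ$ plus higher rank actually forces $\cN$ to fill a connected component of its stratum---is essentially the crux of the conjecture. A secondary obstacle is combinatorial: cylinder deformations only exhibit tangent directions supported on cylinders present on the chosen surface, so producing an $(X,\omega)\in\cN$ whose cylinder diagram is rich enough to witness the full stratum tangent space is delicate. In low genus this can be arranged by explicit geometric constructions, as in the proof of Theorem \ref{T:main}, but such constructions are unavailable in the general setting and would need to be replaced by a more structural argument, perhaps going through Filip's algebraic description of affine invariant submanifolds.
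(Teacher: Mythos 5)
The statement you are addressing is not proved in the paper: it is recorded as an open conjecture of Mirzakhani, and the paper's actual contribution (Theorem \ref{T:main}) is to confirm it only in the single special case $\cN \subseteq \odd$, by an explicit case analysis of the eight multi-cylinder diagrams. So there is no proof in the paper to compare against, and your proposal has to be judged as an attack on an open problem.

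Judged that way, it is a strategy outline with gaps that are not merely technical. First, the step ``higher rank plus $\bk(\cN)=\bQ$ forces $T_M(\cN)$ to fill the stratum's tangent space'' has no mechanism behind it: the Cylinder Deformation Theorem only exhibits tangent directions supported on the cylinders of one chosen periodic surface, and nothing in the higher rank hypothesis guarantees a surface whose $\cN$-parallelism classes are rich enough to span; producing such a surface \emph{is} the content of the conjecture, not a route to it. Second, the implication ``field of definition strictly larger than $\bQ$ contradicts primitivity'' is unproven (and its rank 1 analogue is false --- non-arithmetic Teichm\"uller curves and the genus 2 eigenform loci are primitive with $\bk(\cN)\neq\bQ$); the M\"oller/Filip machinery yields real multiplication or special endomorphisms on a factor of the Jacobian, which is a much weaker conclusion than a map to a lower genus quadratic differential. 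Third, the proposed induction via boundary degenerations presupposes that the degenerations of a primitive higher rank $\cN$ are again higher rank and primitive, and that the conclusion can be reassembled from the boundary; neither is available. The paper sidesteps all of this by working in $\cH(4)$, where the rank is at most 2, Proposition \ref{P:8diagrams} reduces everything to finitely many diagrams, and the required symmetry is exhibited by hand via Proposition \ref{P:cover} and Lemmas \ref{L:O1B}--\ref{L:O3B}. None of those steps generalizes in the way your outline needs.
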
  

In the non-primitive case, $\cN$ should arise from a ``covering construction", but it is an open problem to determine exactly which sorts of such constructions are possible. 

The third author showed that in rank 1 affine invariant submanifolds, all translation surfaces are completely periodic \cite{Wcyl}. This is one reason why rank 1 is very special. The Cylinder Deformation Theorem of \cite{Wcyl} (Theorem \ref{T:CDT} below) supports the conjecture that only few affine invariant submanifolds of higher rank exist. 

Our work builds upon recent work of the last two authors on the hyperelliptic connected component of the minimal stratum in genus 3.

\begin{thm}[Nguyen-Wright \cite{NW}]\label{T:hyp}
There are no proper higher rank affine invariant submanifolds in $\hyp$. 
\end{thm}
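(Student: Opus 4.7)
\medskip
\noindent \textbf{Proof proposal.}
Suppose for contradiction that $\cN \subsetneq \hyp$ is a proper higher rank affine invariant submanifold. Since $\cH(4)$ is a minimal stratum, there is a single zero and no extra marked points, so the relative period coordinates coincide with the absolute period coordinates. Thus the complex dimension of $\cN$ is exactly $2r$, where $r$ is its rank. The hypothesis of higher rank rules out $r=1$; and if $r=3$ then the tangent space of $\cN$ projects onto all of $H^1(X,\bC)$, forcing $\cN$ to be open in $\hyp$ and hence (by $\G$-invariance and connectedness of $\hyp$) equal to $\hyp$. So it suffices to rule out the case $r=2$, that is, $\dim_\bC \cN = 4$ inside the $6$-dimensional $\hyp$.

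The strategy is to produce, inside $\cN$, horizontally periodic surfaces with rich cylinder decompositions, and to use the Cylinder Deformation Theorem (Theorem~\ref{T:CDT}) together with the hyperelliptic involution to exhibit enough tangent vectors to contradict $r=2$. First, I would invoke the standard fact that horizontally periodic surfaces are dense in any affine invariant submanifold (via Smillie--Weiss type results) to pick $(X,\omega)\in\cN$ fully decomposed into horizontal cylinders. Because $(X,\omega)\in\hyp$, the hyperelliptic involution $\tau$ permutes the horizontal cylinders, either fixing a cylinder (acting on it by rotation by $\pi$) or pairing two cylinders; this symmetry severely restricts the possible topological types of cylinder diagrams, and I would enumerate them using the genus $g=3$ Euler characteristic constraint and the constraint that $\tau$ has the correct number of Weierstrass points.

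For each admissible hyperelliptic cylinder diagram, the Cylinder Deformation Theorem produces twist and shear tangent vectors of $\cN$ corresponding to equivalence classes of $\cN$-parallel cylinders. If a horizontally periodic surface in $\cN$ has cylinders falling into at least two $\cN$-equivalence classes, then CDT already supplies two independent absolute tangent vectors in the horizontal direction; rotating and finding a transverse periodic direction (which will again be hyperelliptic-symmetric) produces further vectors, and I would compare the total span with the $2$-complex-dimensional absolute tangent space forced by $r=2$ to get a contradiction. When the cylinder diagram is very restrictive (few cylinders, or all cylinders $\cN$-equivalent), the surface is necessarily combinatorially very special, and I would use the hyperelliptic symmetry to show that its $\G$-orbit closure is rank $1$, contradicting $(X,\omega)\in\cN$ with $r=2$.

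The main obstacle is excluding precisely those configurations in which every horizontally periodic surface in $\cN$ has all its cylinders lying in a single $\cN$-equivalence class, so that CDT yields only one new tangent vector in each periodic direction and the estimates above become tight. I expect to resolve this by a degeneration argument: shrinking an $\cN$-equivalence class to zero height (staying inside the closure of $\cN$ by Theorem~\ref{T:EMM}) produces a boundary surface in a lower-dimensional stratum, and analyzing which such boundary strata are compatible with hyperellipticity in $\cH(4)$ should rule out the pathological case, pinning $\cN$ down to a configuration where the previous paragraph applies and thereby completing the contradiction.
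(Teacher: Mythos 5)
First, note that the paper does not prove this statement: Theorem~\ref{T:hyp} is quoted from Nguyen--Wright \cite{NW}, and the present paper's own case analysis (Sections 4--8) is the analogue of that argument for $\odd$ rather than $\hyp$. Your outline is in the right spirit --- find horizontally periodic surfaces, apply the Cylinder Deformation Theorem, and run a case analysis over cylinder diagrams --- but as written it has genuine gaps at the two places where the real work happens. The first is the supply of cylinders: density of horizontally periodic surfaces (Smillie--Weiss) only gives you \emph{some} periodic surface, with no control on the number of horizontal cylinders or on how they fall into $\cN$-equivalence classes. The actual argument needs Theorem~\ref{T:manyC} (a periodic surface with at least $\rank(\cN)$ cylinders exists in $\cN$), Proposition~\ref{P:NWfree} to handle the two-cylinder situation, and then the shear-rotate-Smillie--Weiss step of Proposition~\ref{P:3cyl} to boost to three cylinders. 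Without this, your ``rich cylinder decomposition'' is unavailable.

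The second and more serious gap is the contradiction mechanism itself. ``Comparing the total span with the $2$-dimensional absolute tangent space'' is not yet an argument: the precise facts needed are (i) in a minimal stratum the twist space spanned by the shears of $\cN$-equivalence classes of horizontal cylinders has dimension at most $\rank(\cN)$, so three \emph{free} cylinders force rank $3$ (Proposition~\ref{P:3free}), and (ii) not all three cylinders can lie in a single equivalence class (\cite[Section 8]{Wcyl}), leaving exactly the ``one free, two $\cN$-parallel'' configuration of Proposition~\ref{P:onefree}. Ruling out that remaining configuration in each cylinder diagram is the bulk of \cite{NW}, and it is done with quantitative area-proportion arguments (Proposition~\ref{P:P}) applied to transverse cylinders produced by shearing --- not by the degeneration you propose. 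Your final step, shrinking an equivalence class to zero height and ``staying inside the closure of $\cN$ by Theorem~\ref{T:EMM},'' is unjustified: Eskin--Mirzakhani--Mohammadi gives no boundary structure for affine invariant submanifolds, and no such compactification theory was available to \cite{NW}. So the pathological case you isolate is exactly the case your proposal cannot close.
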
 



%
%

The analysis of $\odd$ is more complicated than $\hyp$, due to the existence of a greater number of cylinder diagrams. Consequently we have developed new techniques  capable of handling many cases at once.  However, the main novelty of this paper is to start with a rank 2 affine invariant submanifold $\cN$, and to establish sufficient symmetry in the flat geometry to show that any $M\in \cN$ must cover a surface of lower genus. 

An immediate corollary of our work, together with recent work of Matheus-Wright \cite{MW}, is 

\begin{thm}\label{T:main}
There are only finitely many non-arithmetic Teichm\"uller curves in $\odd$ outside of the Prym locus. 
\end{thm}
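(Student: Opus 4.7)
The plan is to deduce this statement as an immediate consequence of two ingredients: the classification theorem of the present paper and the finiteness theorem of Matheus-Wright \cite{MW}.

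First, I would recall that a Teichm\"uller curve is by definition a closed $\G$-orbit, and is in particular an affine invariant submanifold of rank $1$. By the classification theorem (the first Theorem~\ref{T:main} above), the only proper higher rank affine invariant submanifold of $\odd$ is the Prym locus $\prym$. Consequently, any Teichm\"uller curve $C \subset \odd$ that is not contained in $\prym$ is not contained in \emph{any} proper higher rank affine invariant submanifold of $\odd$, since $\prym$ is the unique candidate.

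Second, I would invoke the result of Matheus-Wright \cite{MW}, which asserts that within each connected component of a stratum of abelian differentials there are only finitely many non-arithmetic Teichm\"uller curves that fail to be contained in some proper higher rank affine invariant submanifold. Combining this with the observation of the previous paragraph immediately yields that the set of non-arithmetic Teichm\"uller curves in $\odd \setminus \prym$ is finite.

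The nontrivial work underlying this corollary is concentrated entirely in the classification theorem, which is proved in the body of this paper; the Matheus-Wright theorem enters as a black box. There is no genuine obstacle in the deduction itself: it amounts to observing that the Matheus-Wright hypothesis (that a Teichm\"uller curve not be contained in a proper higher rank affine invariant submanifold) is automatically satisfied for every Teichm\"uller curve outside the Prym locus, thanks to the classification. Thus the main conceptual content to highlight in the writeup is the reduction of the hypothesis of \cite{MW} to the hypothesis ``$C \not\subset \prym$'' via Theorem~\ref{T:main}.
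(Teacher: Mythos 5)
Your deduction is exactly the one the paper intends: the theorem is stated there as an immediate corollary of the classification theorem together with the Matheus--Wright finiteness result, and your reduction of the Matheus--Wright hypothesis (``not contained in any proper higher rank affine invariant submanifold'') to the hypothesis ``$C \not\subset \prym$'' is the whole content of the deduction. The only point to handle with care is that you state the Matheus--Wright theorem as holding for every connected component of every stratum, whereas it is established only for certain components (in particular for minimal strata in low genus, hence for $\odd$), so it should be quoted in that more restricted form.
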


McMullen has constructed infinitely many non-arithmetic Teichm\"uller curves in the Prym locus \cite{Mc2}, which have been studied by M\"oller \cite{Mprym} and Lanneau-Nguyen \cite{LNprym}. There are at present no known non-arithmetic Teichm\"uller curves in $\odd$ outside of the Prym locus. For the context of Theorem \ref{T:main}, including work of Bainbridge, Bouw, Calta, McMullen, M\"oller, and Veech \cite{BaM, Ca, Mc, Mc4, M, M2, M3, V}, see \cite{MW} and the introduction to \cite{NW}. A list of all known non-arithmetic Teichm\"uller curves can be found in the introduction to \cite{W2}.

\bold{Acknowledgments.} We thank Alex Eskin and Maryam Mirzakhani for helpful discussions about affine invariant submanifolds. We are  grateful to Samuel Leli\`evre for creating a document on cylinder diagrams in $\cH(4)$, now available as Appendix C in \cite{MMY}.

The first author was partially supported by the National Science Foundation under Award No. DMS - 1204414.  

\section{Affine invariant submanifolds and cylinder deformations}\label{S:intro}

This section reviews relevant background. 

Every stratum $\cH$ admits a finite orbifold cover $\cH'$ to which the $\G$ action lifts, such that $\cH'$ is a fine moduli space. (Passing to such a cover allows us to assume that $\cH'$ has no orbifold points and is a fine moduli space. This avoids any subtleties in the discussion of local coordinates below. The cover must typically be finite to ensure that, for example, the lift of an orbit  is a finite union of orbit.) For example, $\cH'$ can be taken to be the moduli space of translation surfaces in $\cH$ together with a choice of level 3 structure. Anyone not familiar with this issue is advised to pretend $\cH'=\cH$. In minimal strata, one can indeed take $\cH=\cH'$, and hence we will soon ignore the distinction.

 Near any $M\in \cH'$, local coordinates may be defined as follows. Pick a basis $\gamma_1, \ldots, \gamma_n$ for the relative homology group $H_1(M, \Sigma; \bZ)$, where $\Sigma$ is the set of zeros of $M$.  The local coordinates are 
$$M'=(X', \omega') \mapsto \left( \int_{\gamma_i} \omega'\right) \in \bC^n.$$
An \emph{affine invariant submanifold} in $\cH'$ is a submanifold that is defined in these local coordinates by homogeneous linear equations with real coefficients. An affine invariant submanifold of $\cH$ is the image of one in $\cH'$ under the covering map. 

Let us fix some standard notation. A \emph{cylinder} on a translation surface is the image of an isometric injection from an open Euclidean cylinder $ (\bR/c\bZ)\times (0,h)$. The \emph{height} of the cylinder is $h$, the \emph{circumference} is $c$, and the \emph{modulus} is $h/c$.  We will always assume that cylinders on translation surfaces are maximal, so that their height cannot be increased. A core curve of a cylinder is defined to be the image of any circle $ (\bR/c\bZ)\times \{x_0\}$.  

A saddle connection on a translation surface is a line segment connecting two zeros. The holonomy of a relative homology class on $(X,\omega)$ is the integral of $\omega$ over any cycle representing this class. The holonomy of a saddle connection is the holonomy of the associated relative homology class; this is a vector in $\bC$. For more basic definitions, see \cite{MT, Z}.

Now suppose that $\cN$ is an affine invariant submanifold, and take $M\in \cN$. Suppose that $X$ is a cylinder on $M$. On small deformations $M'$ of $M$ (i.e., for all $M'$ in a sufficiently small simply connected neighborhood of $M$), the cylinder $X$ persists. That is, there is a corresponding cylinder $X'$ on $M'$ whose height, circumference, and direction are all close to those of $X$, and such that  the core curves of $X$ and $X'$ represent the same homology class. Sometimes we may speak of $X'$ as being ``the cylinder $X$ on $M'$."

Two cylinders $X$ and $Y$ on $M\in \cN$ are said to be \emph{$\cN$-parallel} if they are parallel, and remain parallel on all deformations of $M$. The deformations are assumed to be small, so that $X$ and $Y$ persist. Two cylinders $X$ and $Y$ are $\cN$-parallel if and only if there is a constant $c\in \bR$ such that on all deformations in $\cN$, the holonomy of the core curve of $Y$ is $c$ times that of $X$. In other words, two cylinders on $M\in \cN$ are $\cN$-parallel if and only if one of the linear equations defining $\cN$ in local period coordinates makes it so. The relation of being $\cN$-parallel is an equivalence relation, and when we speak of an equivalence class of a cylinder, we mean the set of all cylinders $\cN$-parallel to it.

Define the matrices
$$u_t = \left(\begin{array}{cc} 1 & t\\0 & 1 \end{array}\right), \quad a_s = \left(\begin{array}{cc} 1 & 0\\0 & e^s \end{array}\right),\quad r_\theta = \left(\begin{array}{cc} \cos \theta  & -\sin \theta  \\\sin \theta  & \cos \theta  \end{array}\right).$$
Let $\cC$ be a collection of parallel cylinders on a translation surface $M$. Suppose they are all of angle $\theta \in[0,\pi)$, meaning that plus or minus the holonomy of all the core curves have angle $\theta$ measured counterclockwise from the  positive real direction. Define $a_s^\cC(u_t^\cC(M))$ to be the translation surface obtained by applying $r_{-\theta}$ to $M$, then applying the matrix $a_s u_t$ to the images of the cylinders in $\cC$, and then applying $r_\theta$. 

\begin{thm}[The Cylinder Deformation Theorem \cite{Wcyl}]\label{T:CDT}
Suppose that $\cC$ is an equivalence class of $\cN$-parallel cylinders on $M\in \cN$ at angle $\theta$. Then for all $s, t\in \bR$, the surface $a_s^\cC(u_t^\cC(M))\in \cN$. 
\end{thm}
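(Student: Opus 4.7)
The plan is to reduce the theorem to the infinitesimal statement that the generating tangent vector of the two-parameter flow lies in $T_M \cN$. Once this is done, the full theorem follows: as long as the cylinders in $\cC$ persist, the flow $a_s^\cC u_t^\cC$ is linear in period coordinates, $\cN$ is cut out by linear equations in these coordinates, and one bootstraps from small $(s,t)$ to all $(s,t)\in\bR^2$ by continuation, using that the $\cN$-parallel relation is preserved along the flow.

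First, by $\G$-invariance of $\cN$, I may apply $r_{-\theta}$ and assume $\theta = 0$, so the cylinders in $\cC$ are horizontal. Choose period coordinates near $M$ in which each cylinder of $\cC$ persists. In these coordinates, $u_t^\cC$ is generated by a cohomology class $\tau_\cC \in H^1(M,\Sigma;\bC)$ that vanishes on the core curves of cylinders in $\cC$ and on every relative cycle disjoint from $\cC$, and takes the real value $h_i$ on any saddle connection crossing a cylinder $C_i \in \cC$. Similarly, $a_s^\cC$ is generated by $i\tau_\cC$, since the tangent of $u_t$ is $v \mapsto \Im(v)$ while that of $a_s$ is $v \mapsto i\Im(v)$. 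Because $\cN$ is cut out by real linear equations in complex period coordinates, $T_M \cN$ is a complex subspace of $H^1(M,\Sigma;\bC)$, so $\tau_\cC \in T_M\cN$ implies both generators lie in $T_M\cN$. Thus it suffices to show $\tau_\cC \in T_M \cN$.

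The core of the argument is this last step. Since $\G$ preserves $\cN$, the tangent vector $\tau_{\mathrm{all}}$ corresponding to $u_t$ acting on all of $M$ lies in $T_M\cN$. One has the decomposition
\[ \tau_{\mathrm{all}} = \sum_{\cC'} \tau_{\cC'}, \]
summed over all equivalence classes $\cC'$ of $\cN$-parallel horizontal cylinders on $M$, and the goal is to show each summand individually lies in $T_M\cN$. The approach I would take is to invoke that $\cN$ is defined by linear equations with coefficients in a number field $K$ \cite{Wfield}: core curve periods of cylinders in distinct equivalence classes scale independently under $\mathrm{Gal}(K/\bQ)$, each Galois conjugate of $\tau_{\mathrm{all}}$ again lies in $T_M\cN$ (since the conjugate equations still define $\cN$), and suitable $K$-linear combinations of these conjugates isolate each $\tau_{\cC'}$ while remaining in $T_M\cN$.

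The hard part is precisely this separation step. The decomposition of $\tau_{\mathrm{all}}$ into cylinder-class contributions is geometrically elementary, but showing that each contribution $\tau_{\cC'}$ individually satisfies every linear equation defining $\cN$ requires the number-field structure together with the intrinsic characterization of equivalence classes in terms of those equations. Once $\tau_\cC \in T_M \cN$ is established, the remaining work is routine: the flow preserves $\cN$ for small $(s,t)$, and extends to all of $\bR^2$ by iterating the argument at intermediate surfaces, where the cylinders of $\cC$ continue to form a single $\cN$-parallel equivalence class.
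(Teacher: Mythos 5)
The paper does not prove this theorem; it is imported from \cite{Wcyl}, so I am comparing your sketch against the proof there. Your reduction is the right one and matches \cite{Wcyl}: pass to $\theta=0$, observe that the stretch generator is $i$ times the shear generator and that $T_M(\cN)=\bC\otimes_{\bR}T_M^{\bR}(\cN)$, decompose the derivative of the full horocycle flow as $\tau_{\mathrm{all}}=\sum_{\cC'}\tau_{\cC'}$, and reduce everything to showing $\tau_{\cC}\in T_M(\cN)$. The gap is in the one step you yourself flag as the hard part: the separation of the summands. The Galois-conjugation mechanism does not work, for two reasons. First, it is circular: the field-of-definition theorem of \cite{Wfield} is proved \emph{using} the Cylinder Deformation Theorem (its arguments run through horizontally periodic surfaces and cylinder deformations), so it cannot be an input here. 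Second, even granting it, the mechanism is not coherent: $\tau_{\mathrm{all}}=\sum_i h_i\,\eta_{C_i}$ has coefficients the cylinder heights $h_i$, which are arbitrary real numbers with no relation to the number field $K$; an element of $\mathrm{Gal}(K/\bQ)$ conjugates the defining equations of $\cN$ into equations cutting out a \emph{different} subspace of $H^1(M,\Sigma;\bC)$, it does not act on the vector $\tau_{\mathrm{all}}$ or preserve $T_M(\cN)$, and there is no sense in which ``core curve periods of distinct equivalence classes scale independently under Galois.'' So no $K$-linear combination of conjugates isolates $\tau_{\cC'}$.

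What is actually needed, and what \cite{Wcyl} supplies, is a dynamical input your sketch omits entirely. Since $u_tM\in\cN$ for all $t$ and $\cN$ is closed, the closure of the line $t\mapsto(th_1,\dots,th_n)$ in the twist torus $\prod_i\bR/c_i\bZ$ is a subtorus all of whose points give surfaces in $\cN$; its Lie algebra is the smallest rational subspace containing the modulus vector $(h_i/c_i)_i$, and every standard twist in that subspace therefore lies in $T_M(\cN)$. This is how one obtains tangent vectors beyond $\tau_{\mathrm{all}}$ itself. The equivalence classes are then separated by a perturbation argument: cylinders in distinct $\cN$-equivalence classes are, by definition, not tied by the linear equations, so one can deform $M$ inside $\cN$ to make the moduli of distinct classes rationally independent of one another, which forces the rational subspace above to contain each class's block $(h_i/c_i)_{i\in\cC'}$ (extended by zero) separately; a limiting argument using closedness of $\cN$ handles non-generic $M$. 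Without this subtorus-plus-perturbation step, knowing $\tau_{\mathrm{all}}\in T_M(\cN)$ gives no information about the individual $\tau_{\cC'}$. Your final continuation step (integrating a tangent direction through a locally linear closed set to get the statement for all $s,t$) is fine once $\tau_{\cC}\in T_M(\cN)$ is established at every surface along the path.
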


We call $a_s^\cC$ the \emph{cylinder stretch}, and $u_t^\cC$ the \emph{cylinder shear}.

If $\cC=\{C\}$ consists of a single cylinder on $M\in \cN$, and $a_s^\cC(u_t^\cC(M))\in \cN$ for all $s$ and $t$, then we will say that $C$ is \emph{$\cN$-free}. If $\cN$ is clear from context, we will call $C$ \emph{free}.  A corollary of the Cylinder Deformation Theorem, which we shall use frequently, is that if a cylinder is not $\cN$-parallel to any other then it must be free. 

Suppose that $\cE$ is an equivalence class of $\cN$-parallel cylinders on $M$, and $X$ is a cylinder on $M$. Denote by $P(X,\cE)$ the \emph{proportion of $X$ which lies in $\cE$}. That is, 
$$P(X,\cE)= \frac{\Area(X\cap (\cup_{C\in \cE} C))}{\Area(X)}.$$ 

An elementary consequence of the Cylinder Deformation Theorem is 

\begin{prop}[Nguyen-Wright \cite{NW}]\label{P:P}
Let $X$ and $Y$ be $\cN$-parallel cylinders on $M$, and let $\cE$ be an equivalence class of $\cN$-parallel cylinders on $M$. Then $P(X, \cE)=P(Y,\cE)$. 
\end{prop}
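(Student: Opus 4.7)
The plan is to apply the Cylinder Deformation Theorem to $\cE$ and extract an identity that forces $P(X,\cE) = P(Y,\cE)$. First I would rotate so the cylinders of $\cE$ are horizontal, and dispose of the (easy) case in which $X$ and $Y$ are parallel to $\cE$: since maximal cylinders in a common direction are either equal or disjoint, either both $X, Y \in \cE$ (so both proportions equal $1$) or both are disjoint from $\cE$ (so both proportions equal $0$). Hence I may assume the common direction $\theta$ of $X$ and $Y$ satisfies $\sin\theta \neq 0$.

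The key computation is that, writing $\omega = dx + i\,dy$ locally, the deformation $a_s^\cE$ multiplies $dy$ by $e^s$ inside the $\cE$ cylinders and leaves $\omega$ unchanged outside. Consequently, for any closed curve $\gamma$ transverse to $\partial \cE$,
\[\int_\gamma \omega_s - \int_\gamma \omega = i(e^s - 1)\int_{\gamma \cap \cE} dy.\]
Because the left hand side is a period and depends only on the class of $\gamma$ in $H_1(M;\bZ)$, so does the quantity $\int_{\gamma \cap \cE} dy$. Taking $\gamma = \gamma_X, \gamma_Y$ to be core curves of $X, Y$ and invoking the defining property of $\cN$-parallel (the period of $\gamma_Y$ is a fixed real constant $k$ times that of $\gamma_X$ on every nearby deformation in $\cN$), I would apply this with $M' = a_s^\cE(M) \in \cN$ furnished by the Cylinder Deformation Theorem. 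The constant terms on both sides cancel, yielding
\[\int_{\gamma_Y \cap \cE} dy = k \int_{\gamma_X \cap \cE} dy.\]

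Finally I would identify $\int_{\gamma_X \cap \cE} dy$ as a multiple of $P(X, \cE)$. Using cylinder coordinates $(u, v)$ on $X$ with $u$ along the core curve and $v$ transverse, one has $dy = \sin\theta\,du$ along the core curve $\gamma_v$ at height $v$, so $\int_{\gamma_v \cap \cE} dy = \sin\theta \cdot L(v)$, where $L(v)$ is the length of $\gamma_v \cap \cE$. Since the previous paragraph shows this integral is independent of $v$, averaging over $v \in (0, h_X)$ and applying Fubini gives
\[\int_{\gamma_X \cap \cE} dy = \sin\theta \cdot c_X \cdot P(X, \cE),\]
and the analogous identity holds for $Y$. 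Combining with $c_Y = |k|\,c_X$ and tracking orientation signs, substitution into the identity from the previous paragraph yields $P(X,\cE) = P(Y,\cE)$. The main subtle point is the homological invariance of $\int_{\gamma \cap \cE} dy$, which is what legitimizes the Fubini step; once that is in hand the rest is a direct calculation.
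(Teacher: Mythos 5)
The paper does not prove this proposition itself but imports it from [NW], where the argument is exactly the one you give: apply the cylinder stretch $a_s^\cE$ furnished by the Cylinder Deformation Theorem, observe that it perturbs the period of a transverse core curve $\gamma$ by $i(e^s-1)\int_{\gamma\cap\cE}dy$, identify that integral with $\sin\theta\cdot c\cdot P(\cdot,\cE)$, and cancel against the proportionality constant $k$ of the $\cN$-parallel relation. Your proof is correct, including the careful handling of the degenerate case where $X$ and $Y$ are parallel to $\cE$ and of the orientation signs, and it takes essentially the same route as the source.
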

This proposition is one of the main tools in this paper. 

Let $T_M(\cN)$ denote the tangent space to $\cN$ at $M\in \cN$. This is naturally a subspace of $H^1(M,\Sigma; \bC)$ which is defined by linear equations with real coefficients. Therefore we can write $T_M(\cN)=\bC\otimes_{\bR} T^{\bR}_M(\cN)$, where $T^{\bR}_M(\cN) \subset H^1(M,\Sigma,\bR)$. Let $p:H^1(M,\Sigma; \bR)\to H^1(M; \bR)$ be the natural projection. Some of the results we cite in this paper, including Theorem \ref{T:EMM}, use

\begin{thm}[Avila-Eskin-M\"oller \cite{AEM}]
Let $\cN$ be an affine invariant submanifold, and let $M\in \cN$. Then $p(T^{\bR}_M(\cN))$ is symplectic. 
\end{thm}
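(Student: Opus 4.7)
The strategy is to exhibit $V := p(T^{\bR}_M(\cN))\subset H^1(M;\bR)$ as the fiber at $M$ of a measurable $SL(2,\bR)$-invariant subbundle of the real Hodge bundle over $\cN$, and then to appeal to Forni's analysis of the Hodge norm under the Teichm\"uller flow to conclude that any such subbundle which contains the tautological plane is symplectic.

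The first step is to verify that the fiber assignment $M\mapsto p(T^{\bR}_M(\cN))$ defines a measurable subbundle of the real Hodge bundle over $\cN$ which is invariant under the Kontsevich--Zorich (KZ) cocycle along the Teichm\"uller geodesic flow $g_t$. In period coordinates the $SL(2,\bR)$-action on $\cH$ is the standard linear action on the $\bR^2$-factor of $H^1(M,\Sigma;\bR)\otimes\bR^2$, so the real tangent distribution $T^\bR(\cN)$ is $SL(2,\bR)$-invariant by the very definition of an affine invariant submanifold, and projection to absolute cohomology preserves this invariance. Note also that the bundle automatically contains the tautological plane $\span\{\Re[\omega],\Im[\omega]\}$, since $\G$-orbit directions always lie in $T_M(\cN)$.

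The second step reduces symplectic non-degeneracy of the cup product on $V$ to showing that $K := V\cap V^{\perp}= 0$. Since the cup product is KZ-invariant, so is $V^{\perp}$, and hence $K$ assembles into an isotropic KZ-invariant measurable subbundle of the real Hodge bundle. The crux is then to invoke Forni's analysis of the Hodge norm $\|\cdot\|_H$ along $g_t$. Using the identity $[\alpha\wedge\beta] = \langle\alpha,*\beta\rangle_H$ relating the cup product, the Hodge inner product, and the Hodge star $*$, the isotropy of $K$ translates to $*K$ being Hodge-orthogonal to $V$. Forni's variational formula expresses $\tfrac{d}{dt}\log\|v\|_H$ as a quadratic form in $v$ whose null vectors occupy a prescribed position relative to the tautological plane; combining this with the KZ-invariance of $K$, the presence of the tautological plane inside $V$, and a Poincar\'e recurrence argument, one forces $K=0$.

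The main obstacle is precisely this last step. Forni establishes it directly for continuous invariant subbundles, but an affine invariant submanifold only produces a measurable subbundle of the Hodge bundle. The contribution of Avila--Eskin--M\"oller is to extend the argument to the measurable setting, via a careful reduction to ergodic components of an $SL(2,\bR)$-invariant measure on $\cN$ and a delicate use of the dynamics of the Hodge norm to rule out invariant isotropic sections. I would accordingly spend most of the proof on this extension, treating the earlier steps as routine unpacking of the definitions.
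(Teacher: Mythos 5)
The paper does not prove this theorem: it is imported verbatim from Avila--Eskin--M\"oller \cite{AEM} as background, so there is no in-paper proof against which to compare your proposal. Measured against the argument actually given in \cite{AEM}, your outline is a reasonable roadmap: one does realize $V=p(T^{\bR}_M(\cN))$ as an $SL(2,\bR)$-invariant measurable subbundle of the real Hodge bundle containing the tautological plane, reduce symplecticity to the vanishing of $K=V\cap V^{\perp}$, and attack $K$ via Forni's first-variation formulas for the Hodge norm along the Teichm\"uller geodesic flow combined with recurrence, the essential difficulty being the passage from continuous invariant subbundles (Forni, Forni--Matheus--Zorich) to merely measurable ones. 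But that difficulty is the entire content of \cite{AEM}, several sections of genuinely delicate analysis, and you explicitly defer it; so what you have is a correct plan rather than a proof, and the statement ``a Poincar\'e recurrence argument forces $K=0$'' is doing an enormous amount of undischarged work.

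One substantive inaccuracy in the endgame: the presence of the tautological plane inside $V$ is not what kills $K$. The tautological plane is symplectic for the cup product, so it can never lie in $K=V\cap V^{\perp}$, and its membership in $V$ contributes nothing directly to the vanishing of the kernel. What \cite{AEM} actually proves is that an $SL(2,\bR)$-invariant subbundle on which the cup product degenerates must carry an invariant piece on which the Kontsevich--Zorich cocycle acts isometrically for the Hodge norm, and a further argument rules out such an isometric piece inside the projected tangent bundle of an affine invariant submanifold. If you intend to reconstruct the proof, the correct target is this dichotomy (symplectic versus isometric) for measurable invariant subbundles, not a direct recurrence argument anchored to the tautological plane.
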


In \cite{Wcyl} the third author first suggested that the number $\frac12 \dim_{\bR} p(T^{\bR}_M(\cN))$ be called the (cylinder) \emph{rank of $\cN$}. Any proper affine invariant submanifold of $\cH(4)$ has rank 1 or 2.

\section{Periodic directions in $\odd$}\label{S:intro}

This section discusses the combinatorics of periodic translation surfaces in $\odd$, setting the foundation for our analysis. 

A translation surface is said to be periodic in some direction if the surface is the union of saddle connections and closed trajectories in this direction. After rotation, every periodic direction gives a horizontally periodic translation surface. 

In this paper, we consider horizontally periodic translation surfaces to be (combinatorially) equivalent if there is a homeomorphism between them, taking positively oriented horizontal leaves to positively oriented horizontal leaves. Thus the lengths of the horizontal saddle connections and the circumferences of horizontal cylinders can be changed, and the horizontal cylinders can be individually sheared and stretched, to yield  equivalent horizontally periodic translation surfaces. 

Equivalence classes under this notion of equivalence are called \emph{cylinder diagrams}. We emphasize that cylinder diagrams do not contain any metric information: they indicate only the number of cylinders, and the cyclic order of saddle connections on the top and bottom of each cylinder. Cylinder diagrams do however contain an orientation, so that ``left" and ``up" have meaning in these diagrams. Frequently, some saddle connections are labeled to indicate gluings, but the exact labels used are not considered part of the data of the cylinder diagram.

Some number of years ago, Samuel Leli\`evre used Sage to enumerate the 22 cylinder diagrams in $\cH(4)$, and produced a beautiful document illustrating all of these possibilities. This document helped to inspire and facilitate this work. Recently it has appeared as Appendix C in \cite{MMY}. 

Leli\`evre gives 7 cylinder diagrams in $\odd$ with 3 cylinders, and 7 with 2 cylinders. However, there is some additional symmetry which can be exploited. There is a $\bZ_2\times \bZ_2$ action on the set of cylinder diagrams, where the two generators act by reflection in the $x$-axis and in the $y$-axis respectively. 

These symmetries are most easily thought of by drawing the cylinder diagram on the page, and then reflecting this picture. This is easily shown to be well defined. The effect of ``reflection in the $x$-axis" is to switch the roles of top and bottom saddle connections on each cylinder. The effect of ``reflection in the $y$-axis" is to  reverse the cyclic order of saddle connections on each boundary component of each cylinder.  

\begin{prop}\label{P:8diagrams}
Every cylinder diagram in $\odd$ with at least 2 cylinders is equal to one of those given in Figure \ref{F:8diagrams}, possibly after horizontal or vertical reflection (or both). 
\end{prop}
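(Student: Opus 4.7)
The plan is to establish this as a finite combinatorial enumeration, relying on Leli\`evre's computer-assisted classification of the 22 cylinder diagrams in $\cH(4)$ as a starting point. First I would recall that any horizontally periodic translation surface in $\cH(4)$ has exactly $5$ horizontal saddle connections, since at a zero of order $4$ there are $2(4{+}1)=10$ horizontal separatrices, and each saddle connection contributes two endpoints. This gives the right ``budget'' for the combinatorial search: with $c$ cylinders one must specify $2c$ cyclic words whose entries come from a multiset of $5$ saddle-connection labels, each appearing exactly twice (once on a top and once on a bottom boundary), subject to the natural gluing compatibility.

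Next I would invoke the enumeration in Appendix~C of \cite{MMY}: among the 22 cylinder diagrams in $\cH(4)$, Leli\`evre identifies $7$ with three cylinders and $7$ with two cylinders lying in $\odd$, and the remaining diagrams lie in $\hyp$ or have a single cylinder. The separation between $\odd$ and $\hyp$ is in principle computed from the parity of the spin structure (Arf invariant) associated to a horizontally periodic surface, which may be read directly off the combinatorics of the diagram; alternatively, diagrams in $\hyp$ are precisely those admitting the central symmetry induced by the hyperelliptic involution. I would quote this from Leli\`evre rather than recompute it.

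Given the $14$ diagrams in $\odd$ with at least two cylinders, the last step is to identify the orbits under the $\bZ_2\times\bZ_2$ action generated by reflection in the horizontal axis (swapping top and bottom boundaries of every cylinder) and reflection in the vertical axis (reversing the cyclic order on each boundary). Since each orbit has size $1$, $2$, or $4$, and $14$ orbits must sum to the correct total, the bookkeeping to produce $8$ representatives is constrained; I would go diagram-by-diagram, matching each of the $14$ to a representative from Figure~\ref{F:8diagrams} and verifying by inspection that no two listed representatives are related by a reflection.

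The only real obstacle is the bookkeeping: the statement is essentially a verification rather than a theorem with a single key idea, and the risk is in carefully handling the reflections (especially telling apart diagrams that are \emph{almost} but not quite symmetric under some element of $\bZ_2\times \bZ_2$). A useful sanity check at the end is that the orbit sizes $(a,b,c)$ of orbits of size $1$, $2$, $4$ satisfy $a+b+c=8$ and $a+2b+4c=14$, i.e.\ $b+3c=6$, so one can confirm the orbit partition is consistent before declaring the $8$ representatives of Figure~\ref{F:8diagrams} to be a complete list.
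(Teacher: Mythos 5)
Your proposal matches the paper's approach: the authors likewise take Leli\`evre's enumeration of the $22$ cylinder diagrams in $\cH(4)$ (Appendix C of \cite{MMY}), extract the $7+7$ diagrams in $\odd$ with two or three cylinders, and reduce to the $8$ representatives of Figure \ref{F:8diagrams} under the $\bZ_2\times\bZ_2$ reflection action, noting only that they also verified the list by a tedious but straightforward hand computation. The reflection bookkeeping and the separatrix count you describe are exactly the kind of finite verification the paper asserts without writing out.
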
 

We have checked this proposition using Leli\`evre's list, and also by an independent analysis without the use of computers, which is tedious but straightforward. 

A surface in $\cH(4)$ can have at most 3 parallel cylinders in any given direction. 

Proposition \ref{P:8diagrams} can be restated as follows: Let $M\in \odd$ be horizontally periodic with at least 2 horizontal cylinders. Then possibly after horizontal or vertical reflection (or both), the cylinder diagram of $M$ is equal to one of the 8 diagrams listed in  Figure \ref{F:8diagrams}. A horizontal or vertical reflection of a translation surface is defined to be an affine self map whose derivative is the desired reflection on $\bR^2$. 

To acknowledge the possible need to vertically and horizontally reflect a translation surface, we will write the phrase ``up to symmetry". It is worth noting that reflecting a translation surface may change its orbit closure; but nonetheless such reflections will not affect our arguments. 

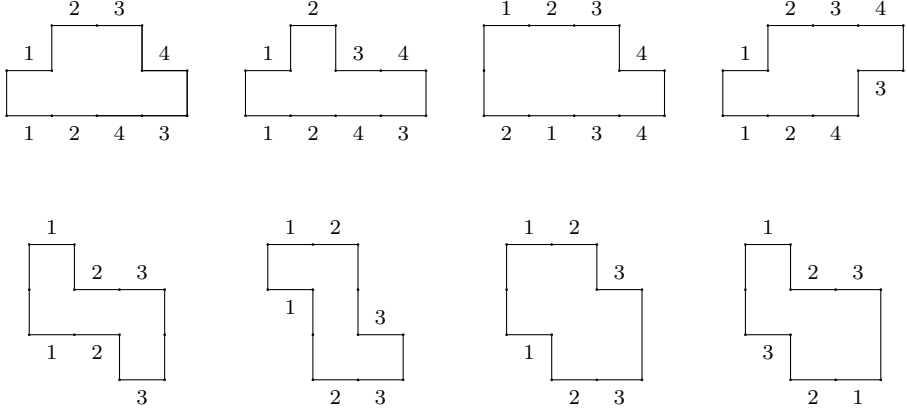
\begin{figure}[h!]
\begin{minipage}[t]{0.24\linewidth}
\centering
\begin{tikzpicture}[scale=0.30]
\draw (0,0) -- (0,2) -- (2,2) -- (2,4) -- (4,4) -- (6,4) -- (6,2) -- (8,2) -- (8,0) -- (6,0) -- (4,0) -- (2,0) --cycle;
\draw (4,0) -- (8,0) -- (8,2) -- (6,2) -- (6,4) -- (4,4);
\foreach \x in {(0,0), (2,0), (4,0), (6,0), (8,0), (8,2), (6,2), (6,4), (4,4), (2,4), (2,2), (0,2)} \draw \x circle (1pt);
\draw(1,2) node[above] {\tiny 1};
\draw(3,4) node[above] {\tiny 2};
\draw(5,4) node[above] {\tiny 3};
\draw(7,2) node[above] {\tiny 4};
\draw(1,0) node[below] {\tiny 1};
\draw(3,0) node[below] {\tiny 2};
\draw(5,0) node[below] {\tiny 4};
\draw(7,0) node[below] {\tiny 3};
\end{tikzpicture}
\end{minipage}
\begin{minipage}[t]{0.24\linewidth}
\centering
\begin{tikzpicture}[scale=0.30]
\draw (0,0)--(0,2)--(2,2)--(2,4)--(4,4)--(4,2)--(6,2)--(8,2)--(8,0)--(6,0)--(4,0)--(2,0)--cycle;
\foreach \x in {(0,0),(0,2),(2,2),(2,4),(4,4),(4,2),(6,2),(8,2),(8,0),(6,0),(4,0),(2,0)} \draw \x circle (1pt);
\draw(1,2) node[above] {\tiny 1};
\draw(3,4) node[above] {\tiny 2};
\draw(5,2) node[above] {\tiny 3};
\draw(7,2) node[above] {\tiny 4};
\draw(1,0) node[below] {\tiny 1};
\draw(3,0) node[below] {\tiny 2};
\draw(5,0) node[below] {\tiny 4};
\draw(7,0) node[below] {\tiny 3};
\end{tikzpicture}
\end{minipage}
\begin{minipage}[b]{0.24\linewidth}
\centering
\begin{tikzpicture}[scale=0.30]
\draw (0,0)--(0,2)--(0,4)--(2,4)--(4,4)--(6,4)--(6,2)--(8,2)--(8,0)--(6,0)--(4,0)--(2,0)--cycle;
\foreach \x in {(0,0),(0,2),(0,4),(2,4),(4,4),(6,4),(6,2),(8,2),(8,0),(6,0),(4,0),(2,0)} \draw \x circle (1pt);
\draw(1,4) node[above] {\tiny 1};
\draw(3,4) node[above] {\tiny 2};
\draw(5,4) node[above] {\tiny 3};
\draw(7,2) node[above] {\tiny 4};
\draw(1,0) node[below] {\tiny 2};
\draw(3,0) node[below] {\tiny 1};
\draw(5,0) node[below] {\tiny 3};
\draw(7,0) node[below] {\tiny 4};
\end{tikzpicture}
\end{minipage}
\begin{minipage}[b]{0.24\linewidth}
\centering
\begin{tikzpicture}[scale=0.30]
\draw (0,0)--(0,2)--(2,2)--(2,4)--(4,4)--(6,4)--(8,4)--(8,2)--(6,2)--(6,0)--(4,0)--(2,0)--cycle;
\foreach \x in {(0,0),(0,2),(2,2),(2,4),(4,4),(6,4),(8,4),(8,2),(6,2),(6,0),(4,0),(2,0)} \draw \x circle (1pt);
\draw(1,2) node[above] {\tiny 1};
\draw(3,4) node[above] {\tiny 2};
\draw(5,4) node[above] {\tiny 3};
\draw(7,4) node[above] {\tiny 4};
\draw(1,0) node[below] {\tiny 1};
\draw(3,0) node[below] {\tiny 2};
\draw(5,0) node[below] {\tiny 4};
\draw(7,2) node[below] {\tiny 3};
\end{tikzpicture}
\end{minipage}

\vspace{0.75cm}

\begin{minipage}[t]{0.24\linewidth}
\centering
\begin{tikzpicture}[scale=0.30]
\draw (0,2)--(0,4)--(0,6)--(2,6)--(2,4)--(4,4)--(6,4)--(6,2)--(6,0)--(4,0)--(4,2)--(2,2)--cycle;
\foreach \x in {(0,2),(0,4),(0,6),(2,6),(2,4),(4,4),(6,4),(6,2),(6,0),(4,0),(4,2),(2,2)} \draw \x circle (1pt);
\draw(1,6) node[above] {\tiny 1};
\draw(3,4) node[above] {\tiny 2};
\draw(5,4) node[above] {\tiny 3};
\draw(1,2) node[below] {\tiny 1};
\draw(3,2) node[below] {\tiny 2};
\draw(5,0) node[below] {\tiny 3};
\end{tikzpicture}
\end{minipage}
\begin{minipage}[b]{0.24\linewidth}
\centering
\begin{tikzpicture}[scale=0.30]
\draw (0,4)--(0,6)--(2,6)--(4,6)--(4,4)--(4,2)--(6,2)--(6,0)--(4,0)--(2,0)--(2,2)--(2,4)--cycle;
\foreach \x in {(0,4),(0,6),(2,6),(4,6),(4,4),(4,2),(6,2),(6,0),(4,0),(2,0),(2,2),(2,4)} \draw \x circle (1pt);
\draw(1,6) node[above] {\tiny 1};
\draw(3,6) node[above] {\tiny 2};
\draw(5,2) node[above] {\tiny 3};
\draw(1,4) node[below] {\tiny 1};
\draw(3,0) node[below] {\tiny 2};
\draw(5,0) node[below] {\tiny 3};
\end{tikzpicture}
\end{minipage}
\begin{minipage}[b]{0.24\linewidth}
\centering
\begin{tikzpicture}[scale=0.30]
\draw (0,2)--(0,4)--(0,6)--(2,6)--(4,6)--(4,4)--(6,4)--(6,2)--(6,0)--(4,0)--(2,0)--(2,2)--cycle;
\foreach \x in {(0,2),(0,4),(0,6),(2,6),(4,6),(4,4),(6,4),(6,0),(4,0),(2,0),(2,2)} \draw \x circle (1pt);
\draw(1,6) node[above] {\tiny 1};
\draw(3,6) node[above] {\tiny 2};
\draw(5,4) node[above] {\tiny 3};
\draw(1,2) node[below] {\tiny 1};
\draw(3,0) node[below] {\tiny 2};
\draw(5,0) node[below] {\tiny 3};
\end{tikzpicture}
\end{minipage}
\begin{minipage}[t]{0.24\linewidth}
\centering
\begin{tikzpicture}[scale=0.30]
\draw (0,2)--(0,4)--(0,6)--(2,6)--(2,4)--(4,4)--(6,4)--(6,2)--(6,0)--(4,0)--(2,0)--(2,2)--cycle;
\foreach \x in {(0,2),(0,4),(0,6),(2,6),(2,4),(4,4),(6,4),(6,0),(4,0),(2,0),(2,2)} \draw \x circle (1pt);
\draw(1,6) node[above] {\tiny 1};
\draw(3,4) node[above] {\tiny 2};
\draw(5,4) node[above] {\tiny 3};
\draw(1,2) node[below] {\tiny 3};
\draw(3,0) node[below] {\tiny 2};
\draw(5,0) node[below] {\tiny 1};
\end{tikzpicture}
\end{minipage}
\caption{Up to symmetry, there are $8$ cylinder diagrams in $\odd$ with either 2 or 3 cylinders. In all such pictures in this paper, opposite vertical edges of polygons are identified, and horizontal edge labeling indicate additional edge identifications.}
\label{F:8diagrams}
\end{figure}

\section{Getting three cylinders}\label{S:intro}

In this section we show that any rank 2 affine invariant submanifold of $\odd$ has a horizontally periodic surface with three horizontal cylinders. 

\begin{thm}[Wright \cite{Wcyl}]\label{T:manyC}
Let $\cN$ be an affine invariant submanifold. Then $\cN$ contains a horizontally periodic translation surface with at least $\rank(\cN)$ many horizontal cylinders. 
\end{thm}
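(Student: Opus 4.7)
The plan is to argue by contradiction, combining the Cylinder Deformation Theorem (Theorem~\ref{T:CDT}) with the symplecticity of $p(T^{\bR}_M(\cN))$ provided by Avila--Eskin--M\"oller. First, a standard horocycle recurrence argument applied to any $M_0\in\cN$ produces a horizontally periodic surface in $\cN$. Among all such surfaces, pick $M\in\cN$ with the maximum number $n$ of horizontal cylinders, and suppose for contradiction that $n<r:=\rank(\cN)$. The objective is to exhibit a horizontally periodic surface in $\cN$ with strictly more than $n$ cylinders, contradicting maximality.

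Let $\gamma_1,\ldots,\gamma_n\in H_1(M;\bR)$ be the horizontal core curves; they are pairwise disjoint, so $W:=\operatorname{span}(\gamma_1,\ldots,\gamma_n)$ is isotropic for the intersection form. Identify $H^1(M;\bR)\cong H_1(M;\bR)$ via the symplectic pairing, and let $V:=p(T^{\bR}_M(\cN))$, a symplectic subspace of real dimension $2r$. Let $W^V\subset V$ be the symplectic projection of $W$; it is isotropic in $V$ and of dimension at most $n$. For each $\cN$-equivalence class $\cE_j$ of horizontal cylinders, the Cylinder Deformation Theorem produces a real tangent vector $\beta_j\in T^{\bR}_M(\cN)$ whose image under $p$, via Poincar\'e duality, is $\sum_{C\in\cE_j}h_C\gamma_C\in W^V$. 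Because $n<r$, $W^V$ is isotropic but strictly smaller than a Lagrangian of $V$, so $(W^V)^{\perp_V}$ strictly contains $W^V$; choose $v\in(W^V)^{\perp_V}\setminus W^V$ and lift to $\tilde v\in T^{\bR}_M(\cN)$ with $p(\tilde v)=v$. Then $\tilde v(\gamma_i)=0$ for every $i$, while $\tilde v$ is not a combination of the cylinder shears $\beta_j$.

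Consider the straight-line path $M_t:=M+t\tilde v$ in period coordinates. For small $t$ it remains in $\cN$ and in the stratum; the horizontal cylinder circumferences are preserved because $\tilde v(\gamma_i)=0$, while the individual horizontal saddle connection lengths vary linearly with $t$. The main obstacle, and the most delicate point of the argument, is to extract from this path an actual horizontally periodic surface in $\cN$ whose cylinder diagram has strictly more cylinders than that of $M$. The approach I would pursue is to push $t$---possibly after modifying $\tilde v$ by a rel element in $\ker p|_{T^{\bR}_M(\cN)}$ or by a short cylinder stretch from Theorem~\ref{T:CDT} to steer the deformation within the stratum---until a limiting horizontally periodic configuration is reached in $\cN$ where one of the original cylinders is refined into two parallel subcylinders by the appearance of a new horizontal saddle connection. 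Closedness of $\cN$ then furnishes a horizontally periodic surface in $\cN$ with at least $n+1$ horizontal cylinders, contradicting the maximality of $n$. The hard part is ensuring that one can reach such a cylinder-splitting degeneration before the path exits either the stratum or $\cN$; controlling this will likely require a careful choice of $v\in(W^V)^{\perp_V}\setminus W^V$ together with the combinatorics of admissible cylinder diagrams in the ambient stratum.
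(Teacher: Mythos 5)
Your setup is essentially the right one and matches the first half of Wright's argument in \cite{Wcyl}: Smillie--Weiss to produce a horizontally periodic surface, maximality of the number $n$ of horizontal cylinders, isotropy of the span of the core curves inside the symplectic space $p(T^{\bR}_M(\cN))$, and the resulting existence of $\tilde v\in T^{\bR}_M(\cN)$ with $\tilde v(\gamma_i)=0$ for every core curve but $p(\tilde v)$ outside the span of the cylinder--shear directions. One point you elide: to make $\tilde v$ usable you need it to be nonzero on some \emph{horizontal saddle connection}, not merely outside the span of the $\beta_j$; this requires the lemma that the twist space $\{w\in T^{\bR}_M(\cN): w(s)=0 \text{ for every horizontal saddle connection } s\}$ is exactly the span of the standard shears together with rel, which is where the Cylinder Deformation Theorem enters a second time.

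The genuine gap is the deformation step, precisely the point you flag as ``the hard part.'' Deforming in the real direction $M_t=M+t\tilde v$ changes only the real parts of periods: every horizontal saddle connection stays horizontal, so $M_t$ remains horizontally periodic with the \emph{same} cylinder diagram for all $t$ until some saddle connection length reaches $0$, at which point the path leaves the stratum. No cylinder ever splits along this path, so the degeneration you hope to reach does not exist inside $\cN$. The correct move is to deform in the \emph{imaginary} direction, $M_t=M+it\tilde v$. Since $\tilde v(\gamma_i)=0$, the $n$ cylinders persist as horizontal cylinders with unchanged circumference, but a horizontal saddle connection $s$ with $\tilde v(s)\neq 0$ acquires a vertical component, so for small $t\neq 0$ the $n$ persistent cylinders no longer fill $M_t$ (or else the horizontal decomposition of $M_t$ already has more than $n$ cylinders). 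Now apply Smillie--Weiss again to the $u_t$--orbit closure of $M_t$: the $n$ cylinders survive in the limit because their circumferences and areas are $u_t$--invariant, and the limit is horizontally periodic, hence has at least $n+1$ horizontal cylinders, contradicting maximality --- with no degeneration and no exit from the stratum. This is the same mechanism used in the proof of Proposition \ref{P:3cyl}.
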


\begin{thm}[Smillie-Weiss \cite{SW2}]\label{T:SW}
The $u_t$ orbit closure of every translation surface contains a horizontally periodic translation surface. 
\end{thm}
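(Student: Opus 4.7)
The plan is to pass to a minimal $u_t$-invariant closed subset of the orbit closure $X := \overline{u_\bR M}$, and then show that this minimal subset contains (in fact consists of) horizontally periodic surfaces. By Zorn's lemma, applied to the nonempty collection of closed $u_t$-invariant subsets of $X$, there exists a minimal such subset $Y \subseteq X$; producing a horizontally periodic surface anywhere in $Y$ is then enough, since $Y \subseteq X$.

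Fix $N \in Y$. The strategy splits on the behavior of the trajectory $\{a_{-s} N\}_{s \geq 0}$, using that $a_{-s}$ contracts vertical directions and dilates horizontal ones, and satisfies the commutation $a_{-s} u_t = u_{e^s t}\, a_{-s}$. In the divergent case, where $a_{-s} N$ escapes every compact subset of the stratum as $s \to \infty$, Masur's criterion combined with the Minsky--Weiss quantitative non-divergence for horocycle flow yields, along a sequence $t_n \to \infty$, a limit $u_{t_n} N \to N_\infty \in Y$ on which horizontal saddle connections have been forced to shorten to the point of gluing (via conservation of total area, and because shortened non-degenerate horizontal segments must bound cylinders in the limit) into a full horizontal cylinder decomposition of $N_\infty$. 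This directly exhibits the desired horizontally periodic surface in $Y \subseteq X$.

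The main obstacle is the opposite case, in which $\{a_{-s} N\}$ returns to a compact subset of the stratum for arbitrarily large $s$. By Masur's criterion this only gives that the horizontal straight-line flow on $N$ is uniquely ergodic, which is strictly weaker than horizontal periodicity. Here my plan is to use Krylov--Bogolyubov on the compact system $(Y, u_t)$ to produce a $u_t$-invariant Borel probability measure $\mu$ on $Y$, and then combine minimality of $Y$ with the way $a_s$ rescales $u_t$-invariance (so that $a_s$-push-forwards of $\mu$ remain $u_t$-invariant up to reparametrization) to show that $\mu$ must concentrate on the closed horizontally periodic locus of the stratum; any surface in $\mathrm{supp}(\mu) \subseteq Y$ is then horizontally periodic. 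Making this last step rigorous---in particular, the transition from unique ergodicity of the horizontal flow on the individual surface $N$ to horizontal periodicity of some surface in the $u_t$-minimal set $Y$---is the technically delicate heart of the argument, and where I would expect to spend the most effort; an alternative would be to bypass this case entirely by arranging, via a preliminary perturbation within $X$ using an auxiliary element of $\G$, that the minimal set $Y$ can be chosen so that some surface in it has divergent $a_{-s}$-trajectory, reducing everything to the first case.
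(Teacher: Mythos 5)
The paper does not prove this statement: it is imported verbatim from Smillie--Weiss \cite{SW2} and used as a black box (in the proof of Proposition 4.4), so there is no internal proof to compare against. Your skeleton --- produce a $u_t$-minimal set $Y$ inside the orbit closure, then renormalize by the diagonal flow and use Minsky--Weiss nondivergence --- is indeed the Smillie--Weiss strategy, but the execution has gaps, and one central step fails outright.

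The failing step is the ``divergent'' case. You want horizontal saddle connections of $N$ to ``shorten to the point of gluing'' along a sequence $u_{t_n}N$. But $u_t$ fixes the horizontal component of every period, so the lengths of horizontal saddle connections and the heights, circumferences and areas of horizontal cylinders are exact $u_t$-invariants; nothing horizontal shortens along a horocycle orbit, and these invariants persist in any limit $u_{t_n}N\to N_\infty$. Moreover Masur's criterion is a one-way implication (recurrence of the contracted geodesic ray forces unique ergodicity of the corresponding foliation); divergence of $a_{-s}N$ yields no periodicity conclusion whatsoever. So neither ingredient you invoke produces a horizontally periodic $N_\infty$.

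The remaining gaps: (i) Zorn's lemma does not produce a minimal set in a noncompact stratum unless one first shows that every nonempty closed $u_t$-invariant set meets a fixed compact set --- this is precisely where quantitative nondivergence must enter, and you have not placed it there; (ii) Krylov--Bogolyubov requires compactness of $Y$, which you have not established; (iii) the assertion that a $u_t$-invariant measure on $Y$ must charge horizontally periodic surfaces is essentially the theorem itself, and no argument is offered (nor is the ``horizontally periodic locus'' closed); (iv) the proposed fallback of perturbing by an auxiliary element of $\G$ is unavailable, since $\overline{u_{\bR}M}$ is only $u_t$-invariant. The actual Smillie--Weiss argument avoids any case division on a single trajectory: it applies the geodesic flow to the entire minimal set $Y$, notes that each image is again $u_t$-minimal and hence meets the fixed compact set supplied by nondivergence, and extracts the horizontally periodic surface from a limit of such images. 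I would encourage you to restructure along those lines rather than around the dichotomy for $a_{-s}N$.
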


\begin{prop}[Nguyen-Wright \cite{NW}]\label{P:NWfree}
If every surface in an affine invariant submanifold $\cN$ contains at most $\rank(\cN)$ horizontal cylinders, then every cylinder on every surface in $\cN$ is free. 
\end{prop}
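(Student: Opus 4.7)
The plan is a proof by contradiction: assume some $M\in\cN$ carries a cylinder $C$ that is not $\cN$-free, and aim to construct a horizontally periodic surface in $\cN$ with strictly more than $r:=\rank(\cN)$ horizontal cylinders, contradicting the hypothesis. By the corollary of Theorem~\ref{T:CDT} recorded in the text, the failure of $C$ to be free forces the $\cN$-parallel equivalence class $\cE$ containing $C$ to satisfy $|\cE|\ge 2$.

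First I would reduce to a horizontally periodic model. After rotating so that the cylinders of $\cE$ are horizontal, Theorem~\ref{T:SW} provides a horizontally periodic $M'\in\cN$ in the $u_t$-orbit closure of $M$. Since $u_t$ preserves horizontal core curves and since being $\cN$-parallel is cut out by linear equations on the tangent space---a closed condition in period coordinates---the cylinders of $\cE$ persist as horizontal cylinders on $M'$ that remain pairwise $\cN$-parallel. Let $\cE'$ denote their equivalence class on $M'$, so $|\cE'|\ge 2$. Writing $n$ for the total number of horizontal cylinders on $M'$ and $k$ for the number of equivalence classes among them, one immediately has $k\le n-1$, and the hypothesis $n\le r$ gives $k\le r-1$.

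The heart of the argument, and the main expected obstacle, is to squeeze an extra horizontal cylinder out of this configuration. The plan is to combine cylinder deformations of $\cE'$ (which stay in $\cN$ by Theorem~\ref{T:CDT}) with a second application of Theorem~\ref{T:SW} in a suitably chosen transverse direction, and to arrange that the resulting horizontally periodic $M''\in\cN$ has more than $r$ horizontal cylinders. The key leverage is Proposition~\ref{P:P}: each of the two or more cylinders in $\cE'$ meets every other equivalence class in the same area proportion, which forces $M'$ to be highly symmetric relative to $\cE'$. Applying a stretch $a_s^{\cE'}$ to make the cylinders of $\cE'$ vertically large, and then running Smillie--Weiss in a direction transverse to $\cE'$, should produce a new horizontal decomposition that inherits contributions from each of the $|\cE'|\ge 2$ tall cylinders as well as from the other $k-1$ equivalence classes, for a total exceeding $r$.

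Making the last step rigorous is where the real work lies: one must rule out degenerate scenarios in which the newly found horizontally periodic surface has no more cylinders than $M'$ did. I expect this to require case analysis on how the cylinders of $\cE'$ are glued to the other horizontal equivalence classes on $M'$, guided by the proportional-area constraints of Proposition~\ref{P:P} and by the fact that $k\le r-1$ leaves relatively little room for the combinatorics of $M'$ to vary.
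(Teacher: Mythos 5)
Your reduction to a horizontally periodic surface is sound and matches the opening of the argument in \cite{NW}: rotate so that the non-free cylinder and some $\cN$-parallel partner are horizontal, pass to a horizontally periodic $M'$ in the $u_t$-orbit closure via Theorem \ref{T:SW} (horizontal cylinders have constant height and circumference along $u_t$, so they persist in the limit, and the linear relation expressing $\cN$-parallelism passes to the limit), and conclude that $M'$ has $n\le r$ horizontal cylinders falling into $k\le n-1\le r-1$ equivalence classes.

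The gap is the final step, and it is the entire content of the proposition. Your mechanism --- stretch $\cE'$ with $a_s^{\cE'}$ and run Smillie--Weiss in a transverse direction --- cannot work as described, for two reasons. First, the hypothesis applies in \emph{every} direction (by $\G$-invariance of $\cN$), so any periodic direction you locate on any surface in $\cN$ carries at most $r$ parallel cylinders; searching in transverse directions can therefore never exhibit more than $r$ of them. Second, even your heuristic count fails: contributions from the $|\cE'|\ge 2$ cylinders of $\cE'$ plus the other $k-1$ classes give at least $2+(k-1)=k+1\le n\le r$, which does not exceed $r$. What is actually needed is the forcing result of \cite{Wcyl} (the engine behind Theorem \ref{T:manyC}, and the ingredient used in \cite{NW}): if $M'$ is horizontally periodic and the number of equivalence classes of $\cN$-parallel horizontal cylinders is strictly less than $\rank(\cN)$, then the twist space is a \emph{proper} subspace of the cylinder-preserving subspace of $T_{M'}(\cN)$ --- a dimension count resting on the isotropy of the image of the twist space in absolute cohomology together with the Avila--Eskin--M\"oller symplecticity theorem --- and perturbing $M'$ in a cylinder-preserving, non-twist direction followed by Theorem \ref{T:SW} in the \emph{horizontal} direction yields a horizontally periodic surface with strictly more horizontal cylinders, on which the old cylinders and their equivalences persist. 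Iterating while the original $\cN$-parallel pair keeps $k\le n-1\le r-1<\rank(\cN)$ forces $n$ to grow past $r$, contradicting the hypothesis. Without this cohomological input your outline does not close.
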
 

\begin{prop}\label{P:3cyl}
Let $\cN$ be a rank 2 affine invariant submanifold of $\odd$. Then $\cN$ contains a horizontally periodic surface with 3 horizontal cylinders. 
\end{prop}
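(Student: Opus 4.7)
The plan is to argue by contradiction: suppose that every horizontally periodic surface in $\cN$ has at most $2=\rank(\cN)$ horizontal cylinders. By $\G$-invariance of $\cN$, the same upper bound then holds in every direction, so no surface in $\cN$ has three or more parallel cylinders in any direction. Theorem \ref{T:manyC} produces a horizontally periodic $M\in \cN$ with at least $2$ horizontal cylinders, and by our standing assumption $M$ has exactly $2$. Proposition \ref{P:NWfree} now guarantees that every cylinder on every surface in $\cN$ is free, so in particular each of the two horizontal cylinders of $M$ can be independently sheared and stretched while remaining in $\cN$. Finally, Proposition \ref{P:8diagrams} tells us that, up to horizontal and/or vertical reflection, the cylinder diagram of $M$ is one of the four two-cylinder diagrams in Figure \ref{F:8diagrams}.

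The core of the argument is a case-by-case analysis of these four diagrams. In each case I would identify a non-horizontal direction $\theta$ (typically containing a short diagonal saddle connection clearly visible in the picture) and then use the independent shears and stretches of the two free horizontal cylinders to adjust circumferences, heights, and twists so that, on the resulting surface $M'\in\cN$, direction $\theta$ is completely periodic with three parallel cylinders. Applying $r_{-\theta}\in \G$ to $M'$ produces a horizontally periodic surface in $\cN$ with three horizontal cylinders, contradicting our assumption. The guiding principle in each diagram is that one can slice off a small cylinder along a diagonal once the two free horizontal moduli and twists are chosen generically; the four real parameters provided by independent shear and stretch of the two free cylinders give plenty of room to arrange the required cylinder decomposition and to force the remaining complement to split into the required two further cylinders.

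The main obstacle is the case analysis itself: one must, for each of the four diagrams, explicitly exhibit a direction of three parallel cylinders on a suitable deformation of $M$, and then verify that the deformation is achievable by the independent horizontal cylinder shears and stretches available to us. The possible need to apply a horizontal or vertical reflection introduces no real difficulty, since reflections preserve cylinder diagrams, cylinder numbers, and freeness, so the reduction to the four listed diagrams is genuine and loses no generality.
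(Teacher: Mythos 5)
Your setup is exactly the paper's: invoke Theorem \ref{T:manyC} to get a two-cylinder horizontally periodic surface, apply Proposition \ref{P:NWfree} (under the contradiction hypothesis) to make both cylinders free, and reduce via Proposition \ref{P:8diagrams} to the four two-cylinder diagrams. The gap is in your finishing move. You propose to use the free shears and stretches to make some direction $\theta$ \emph{completely periodic with three cylinders}, and you justify this by saying that generic choices of the four real parameters "give plenty of room." This is backwards: complete periodicity of a fixed direction is a highly non-generic condition, and generic deformations of the twists produce directions whose complement contains minimal components rather than cylinders. Exhibiting, by hand, a deformation after which an entire direction decomposes into three cylinders is a genuinely delicate combinatorial task in each diagram, and nothing in your proposal carries it out or explains why it must succeed.

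The paper's proof sidesteps this entirely with a weaker intermediate goal plus a compactness-type input. In each of the four diagrams one only needs to shear the two free horizontal cylinders so that there are \emph{two vertical cylinders whose union is not the whole surface} (this is easy: close up a vertical trajectory inside each horizontal cylinder, as in Figure \ref{F:2cyls}). Rotating by $\pi/2$ gives $M''\in\cN$ with two horizontal cylinders not filling the surface. Theorem \ref{T:SW} (Smillie--Weiss) then supplies a horizontally periodic surface $M'''$ in the $u_t$-orbit closure of $M''$; since $u_t$ preserves the two horizontal cylinders and their union is proper, $M'''$ must carry at least one additional horizontal cylinder, hence exactly three. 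The missing idea in your write-up is precisely this use of Theorem \ref{T:SW} to upgrade "two cylinders not filling the surface" to "a completely periodic direction with three cylinders"; without it, the case analysis you defer to is where the real (and unaddressed) difficulty lives.
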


\begin{proof}
By Theorem \ref{T:manyC}, there is a horizontally periodic surface $M$ in $\cN$ with at least 2 horizontal cylinders. If $M$ has 3 horizontal cylinders the result is proved, so we will assume that $M$ has only 2 horizontal cylinders. Thus by Proposition \ref{P:NWfree}, every cylinder on every surface in $\cN$ is free.

By Proposition \ref{P:8diagrams}, $M$ has one of the four 2-cylinder diagrams in Figure \ref{F:8diagrams} (up to symmetry). Figure \ref{F:2cyls} shows that, in each of these four cases, the 2 horizontal cylinders on $M$ can be sheared to give a surface $M'$ with 2 vertical cylinders whose union is not all of $M'$.  

Rotate $M'$ by $\frac\pi2$ to give a surface $M''$ with two horizontal cylinders whose union is not the whole surface. By Theorem \ref{T:SW}, there is a horizontally periodic translation surface $M'''$ in the $u_t$ orbit closure of $M''$. This $M'''$ must have  3 horizontal cylinders. 
\end{proof}

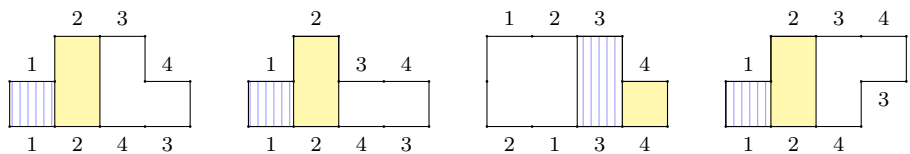
\begin{figure}[h!]
\begin{minipage}[t]{0.24\linewidth}
\centering
\begin{tikzpicture}[scale=0.30]
\draw [pattern=vertical lines, pattern color=blue!40] (0,0) rectangle (2,2);
\draw [fill=yellow!40] (2,0) rectangle (4,4);
\draw (4,0) -- (8,0) -- (8,2) -- (6,2) -- (6,4) -- (4,4);
\foreach \x in {(0,0), (2,0), (4,0), (6,0), (8,0), (8,2), (6,2), (6,4), (4,4), (2,4), (2,2), (0,2)} \draw \x circle (1pt);
\draw(1,2) node[above] {\tiny 1};
\draw(3,4) node[above] {\tiny 2};
\draw(5,4) node[above] {\tiny 3};
\draw(7,2) node[above] {\tiny 4};
\draw(1,0) node[below] {\tiny 1};
\draw(3,0) node[below] {\tiny 2};
\draw(5,0) node[below] {\tiny 4};
\draw(7,0) node[below] {\tiny 3};
\end{tikzpicture}
\end{minipage}
\begin{minipage}[t]{0.24\linewidth}
\centering
\begin{tikzpicture}[scale=0.30]
\draw [pattern=vertical lines, pattern color=blue!40] (0,0) rectangle (2,2);
\draw [fill=yellow!40] (2,0) rectangle (4,4);
\draw (0,0)--(0,2)--(2,2)--(2,4)--(4,4)--(4,2)--(6,2)--(8,2)--(8,0)--(6,0)--(4,0)--(2,0)--cycle;
\foreach \x in {(0,0),(0,2),(2,2),(2,4),(4,4),(4,2),(6,2),(8,2),(8,0),(6,0),(4,0),(2,0)} \draw \x circle (1pt);
\draw(1,2) node[above] {\tiny 1};
\draw(3,4) node[above] {\tiny 2};
\draw(5,2) node[above] {\tiny 3};
\draw(7,2) node[above] {\tiny 4};
\draw(1,0) node[below] {\tiny 1};
\draw(3,0) node[below] {\tiny 2};
\draw(5,0) node[below] {\tiny 4};
\draw(7,0) node[below] {\tiny 3};
\end{tikzpicture}
\end{minipage}
\begin{minipage}[b]{0.24\linewidth}
\centering
\begin{tikzpicture}[scale=0.30]
\draw [pattern=vertical lines, pattern color=blue!40] (4,0) rectangle (6,4);
\draw [fill=yellow!40] (6,0) rectangle (8,2);
\draw (0,0)--(0,2)--(0,4)--(2,4)--(4,4)--(6,4)--(6,2)--(8,2)--(8,0)--(6,0)--(4,0)--(2,0)--cycle;
\foreach \x in {(0,0),(0,2),(0,4),(2,4),(4,4),(6,4),(6,2),(8,2),(8,0),(6,0),(4,0),(2,0)} \draw \x circle (1pt);
\draw(1,4) node[above] {\tiny 1};
\draw(3,4) node[above] {\tiny 2};
\draw(5,4) node[above] {\tiny 3};
\draw(7,2) node[above] {\tiny 4};
\draw(1,0) node[below] {\tiny 2};
\draw(3,0) node[below] {\tiny 1};
\draw(5,0) node[below] {\tiny 3};
\draw(7,0) node[below] {\tiny 4};
\end{tikzpicture}
\end{minipage}
\begin{minipage}[b]{0.24\linewidth}
\centering
\begin{tikzpicture}[scale=0.30]
\draw [pattern=vertical lines, pattern color=blue!40] (0,0) rectangle (2,2);
\draw [fill=yellow!40] (2,0) rectangle (4,4);
\draw (0,0)--(0,2)--(2,2)--(2,4)--(4,4)--(6,4)--(8,4)--(8,2)--(6,2)--(6,0)--(4,0)--(2,0)--cycle;
\foreach \x in {(0,0),(0,2),(2,2),(2,4),(4,4),(6,4),(8,4),(8,2),(6,2),(6,0),(4,0),(2,0)} \draw \x circle (1pt);
\draw(1,2) node[above] {\tiny 1};
\draw(3,4) node[above] {\tiny 2};
\draw(5,4) node[above] {\tiny 3};
\draw(7,4) node[above] {\tiny 4};
\draw(1,0) node[below] {\tiny 1};
\draw(3,0) node[below] {\tiny 2};
\draw(5,0) node[below] {\tiny 4};
\draw(7,2) node[below] {\tiny 3};
\end{tikzpicture}
\end{minipage}
\caption{In any horizontally periodic translation surface in $\odd$ with 2 horizontal cylinders, it is possible to twist the horizontal cylinders so that there are 2 vertical cylinders whose union is not the whole surface.}
\label{F:2cyls}
\end{figure}

\section{Setting up a case by case analysis}\label{S:intro}

The section outlines the structure of our analysis, which follows that of \cite{NW} up to a point. The cases given in this section are the topic of the remainder of the paper. 

\begin{prop}[Nguyen-Wright \cite{NW}]\label{P:3free}
Let $\cN$ be an affine invariant submanifold of $\cH(4)$. If $\cN$ contains a horizontally periodic surface with $3$ free horizontal cylinders, then $\cN$ is equal to a connected component of $\cH(4)$.
\end{prop}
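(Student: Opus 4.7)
The plan is to show that three free horizontal cylinders produce enough tangent directions to force $\cN$ locally to coincide with $\cH(4)$.

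First, I would extract real tangent vectors from the three free cylinders. Let $C_1, C_2, C_3$ denote the cylinders on $M$ with core curves $\gamma_1, \gamma_2, \gamma_3$ and heights $h_1, h_2, h_3$. Because each $C_i$ is $\cN$-free, Theorem \ref{T:CDT} gives $u_t^{C_i}(M), a_s^{C_i}(M) \in \cN$ for all $s, t \in \bR$, so the infinitesimal cylinder shears and stretches lie in $T_M(\cN)$. A direct computation in period coordinates identifies these as $h_i \eta_i$ and $i h_i \eta_i$ respectively, where $\eta_i \in H^1(M,\Sigma;\bR)$ is the ``relative Poincar\'e dual'' of $\gamma_i$, i.e.\ the class whose evaluation on any relative cycle returns its algebraic intersection number with $\gamma_i$. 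Thus $\eta_i \in T^{\bR}_M(\cN)$ for $i = 1, 2, 3$.

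Next, I would analyze the absolute projections $p(\eta_i) \in H^1(M;\bR)$, which are the Poincar\'e duals of the absolute homology classes $[\gamma_i]$. Because the $\gamma_i$ are pairwise disjoint simple closed curves, their homology classes span an isotropic subspace of $H_1(M;\bR)$, and so $p(\eta_1), p(\eta_2), p(\eta_3)$ span an isotropic subspace of $H^1(M;\bR)$. A case-by-case check of the three-cylinder diagrams in $\cH(4)$ shows that $[\gamma_1], [\gamma_2], [\gamma_3]$ are linearly independent in $H_1(M;\bR)$; therefore $p(\eta_1), p(\eta_2), p(\eta_3)$ span a Lagrangian subspace $L \subset H^1(M;\bR)$ of dimension $g = 3$.

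Now I would apply the Avila--Eskin--M\"oller theorem to conclude $p(T^{\bR}_M(\cN))$ is symplectic. Since $L \subset p(T^{\bR}_M(\cN))$ and $L$ is isotropic inside this symplectic subspace, we must have $\dim L \leq \tfrac{1}{2} \dim p(T^{\bR}_M(\cN))$, forcing $\dim p(T^{\bR}_M(\cN)) \geq 2g = \dim H^1(M;\bR)$ and therefore $p(T^{\bR}_M(\cN)) = H^1(M;\bR)$.

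Finally, because $\cH(4)$ has a single zero, the long exact sequence of the pair $(M,\Sigma)$ makes $p: H^1(M,\Sigma;\bR) \to H^1(M;\bR)$ an isomorphism, so $T^{\bR}_M(\cN)$ equals all of $H^1(M,\Sigma;\bR)$ and $T_M(\cN)$ equals the full tangent space of $\cH(4)$ at $M$. A closed affine invariant submanifold that agrees with $\cH(4)$ in an open neighborhood of $M$ must contain the connected component of $M$. The main obstacle I anticipate is the linear-independence of $[\gamma_1], [\gamma_2], [\gamma_3]$ in $H_1(M;\bR)$: this seems to require inspecting each three-cylinder diagram individually (only a handful exist in $\cH(4)$), although one expects a uniform proof using that three is the maximal number of horizontal cylinders in a genus-three translation surface.
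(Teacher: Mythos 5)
This proposition is quoted from \cite{NW} and the paper supplies no proof of its own, but your argument is essentially the standard one used there: the three free cylinders contribute the classes $\eta_i$ to $T^{\bR}_M(\cN)$ via Theorem \ref{T:CDT}, their images $p(\eta_i)$ span an isotropic subspace, the Avila--Eskin--M\"oller theorem then forces $p(T^{\bR}_M(\cN))=H^1(M;\bR)$, and since the stratum is minimal $p$ is an isomorphism, so $\cN$ is open and closed in its component. The linear independence you flag needs no case-by-case check: because $\cH(4)$ has a single zero, the union of all horizontal saddle connections is connected, hence the complement of the three core curves is connected, and the only relation among the $[\gamma_1],[\gamma_2],[\gamma_3]$ is the trivial one.
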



Let $\cN$ be a rank 2 affine invariant submanifold of $\odd$. Proposition \ref{P:3cyl} gives that there is an $M\in \cN$ with three horizontal cylinders. Proposition \ref{P:3free} gives that these three horizontal cylinders cannot all be free. On the other hand, \cite[Section 8]{Wcyl} shows that not all three horizontal cylinders on $M$ can be $\cN$-equivalent to each other. (This is explained in more detail in \cite{NW}.) Hence, we get 

\begin{prop}\label{P:onefree}
Let $\cN$ be an affine invariant submanifold of $\cH(4)$, and suppose $M\in \cN$ has three horizontal cylinders. Then one of these three horizontal cylinders is free, and the other two are $\cN$-parallel to each other. 
\end{prop}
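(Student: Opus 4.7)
The plan is to observe that the $\cN$-parallel relation partitions the three horizontal cylinders of $M$ into equivalence classes, and then to rule out the two undesired partition types using results already established in the excerpt. With three cylinders there are exactly three possibilities up to relabeling: (i) a single equivalence class containing all three cylinders; (ii) three singleton classes; or (iii) one singleton class together with one pair.

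First I would handle possibility (ii). By the corollary to the Cylinder Deformation Theorem recorded in Section~2, a cylinder which is the unique member of its $\cN$-equivalence class is automatically $\cN$-free. Thus in possibility (ii), all three horizontal cylinders are free, and Proposition~\ref{P:3free} then forces $\cN$ to be a connected component of $\cH(4)$. Since the proposition is being invoked in the context of a proper rank~$2$ affine invariant submanifold of $\odd$, this contradicts the standing hypothesis. Possibility (i), that all three horizontal cylinders are mutually $\cN$-parallel, is precisely what is ruled out by \cite[Section~8]{Wcyl}, as already noted in the paragraph preceding the proposition. Possibility (iii) is therefore the only remaining case, and it yields exactly the desired conclusion: the singleton is free (again by the same corollary of the Cylinder Deformation Theorem), while the other two cylinders are $\cN$-parallel to each other.

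I do not expect a substantive obstacle here: the argument is a short three-way case analysis in which the Cylinder Deformation Theorem supplies the automatic freeness of isolated cylinders, while Proposition~\ref{P:3free} and \cite[Section~8]{Wcyl} rule out the two extreme partitions. The only point requiring care is bookkeeping about the implicit properness hypothesis, so that Proposition~\ref{P:3free} actually produces a contradiction rather than a consistent outcome, and verifying that the input from \cite[Section~8]{Wcyl} is applicable to the specific $M$ under consideration rather than only to some unspecified surface in $\cN$.
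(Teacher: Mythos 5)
Your proposal is correct and follows essentially the same route as the paper: the paper also argues that Proposition \ref{P:3free} rules out all three cylinders being free, that \cite[Section 8]{Wcyl} rules out all three being $\cN$-parallel, and that the only remaining partition type gives one free cylinder and one $\cN$-parallel pair. Your explicit remark about the implicit properness hypothesis matches the paper's standing assumption that $\cN$ is a proper rank 2 submanifold.
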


We now have a number of not necessarily disjoint cases. By Proposition \ref{P:3cyl}, $\cN$ contains a horizontally periodic surface whose cylinder diagram is equal to one of the four 3-cylinder diagrams in Figure \ref{F:8diagrams}, up to symmetry. On this surface, one of the three cylinders is free. Thus we have 12 cases: 3 choices of a free cylinder in each of the four 3-cylinder diagrams.  In Figure \ref{F:O1-4} these cases are enumerated.

\begin{figure}[htb]
\begin{minipage}[t]{0.24\linewidth}
\centering
\begin{tikzpicture}[scale=0.30]
\draw [fill=green!40] (0,4) rectangle (2,6);
\draw [fill=green!40] (4,0) rectangle (6,2);
\draw (0,2)--(0,4)--(0,6)--(2,6)--(2,4)--(4,4)--(6,4)--(6,2)--(6,0)--(4,0)--(4,2)--(2,2)--cycle;
\foreach \x in {(0,2),(0,4),(0,6),(2,6),(2,4),(4,4),(6,4),(6,2),(6,0),(4,0),(4,2),(2,2)} \draw \x circle (1pt);
\draw(1,6) node[above] {\tiny 1};
\draw(3,4) node[above] {\tiny 2};
\draw(5,4) node[above] {\tiny 3};
\draw(1,2) node[below] {\tiny 1};
\draw(3,2) node[below] {\tiny 2};
\draw(5,0) node[below] {\tiny 3};
\draw(1,5) node {\tiny A};
\draw(3,3) node {\tiny B};
\draw(5,1) node {\tiny C};
\draw(-1.5,3) node {\small (O1)};
\end{tikzpicture}
\end{minipage}
\begin{minipage}[b]{0.24\linewidth}
\centering
\begin{tikzpicture}[scale=0.30]
\draw [pattern=horizontal lines, pattern color=red!40] (2,0) rectangle (6,2);
\draw [pattern=horizontal lines, pattern color=red!40] (0,4) rectangle (4,6);
\draw (0,4)--(0,6)--(2,6)--(4,6)--(4,4)--(4,2)--(6,2)--(6,0)--(4,0)--(2,0)--(2,2)--(2,4)--cycle;
\foreach \x in {(0,4),(0,6),(2,6),(4,6),(4,4),(4,2),(6,2),(6,0),(4,0),(2,0),(2,2),(2,4)} \draw \x circle (1pt);
\draw(1,6) node[above] {\tiny 1};
\draw(3,6) node[above] {\tiny 2};
\draw(5,2) node[above] {\tiny 3};
\draw(1,4) node[below] {\tiny 1};
\draw(3,0) node[below] {\tiny 2};
\draw(5,0) node[below] {\tiny 3};
\draw(2,5) node {\tiny A};
\draw(3,3) node {\tiny B};
\draw(4,1) node {\tiny C};
\draw(-1.5,3) node {\small (O2)};
\end{tikzpicture}
\end{minipage}
\begin{minipage}[b]{0.24\linewidth}
\centering
\begin{tikzpicture}[scale=0.30]
\draw [fill=green!40] (2,0) rectangle (6,2);
\draw [fill=green!40] (0,4) rectangle (4,6);
\draw (0,2)--(0,4)--(0,6)--(2,6)--(4,6)--(4,4)--(6,4)--(6,2)--(6,0)--(4,0)--(2,0)--(2,2)--cycle;
\foreach \x in {(0,2),(0,4),(0,6),(2,6),(4,6),(4,4),(6,4),(6,0),(4,0),(2,0),(2,2)} \draw \x circle (1pt);
\draw(1,6) node[above] {\tiny 1};
\draw(3,6) node[above] {\tiny 2};
\draw(5,4) node[above] {\tiny 3};
\draw(1,2) node[below] {\tiny 1};
\draw(3,0) node[below] {\tiny 2};
\draw(5,0) node[below] {\tiny 3};
\draw(2,5) node {\tiny A};
\draw(3,3) node {\tiny B};
\draw(4,1) node {\tiny C};
\draw(-1.5,3) node {\small (O3)};
\end{tikzpicture}
\end{minipage}
\begin{minipage}[t]{0.24\linewidth}
\centering
\begin{tikzpicture}[scale=0.30]
\draw [pattern=horizontal lines, pattern color=red!40] (0,4) rectangle (2,6);
\draw [fill=green!40] (2,0) rectangle (6,2);
\draw [pattern = dots, pattern color=black!40] (0,2) rectangle (6,4);
\draw (0,2)--(0,4)--(0,6)--(2,6)--(2,4)--(4,4)--(6,4)--(6,2)--(6,0)--(4,0)--(2,0)--(2,2)--cycle;
\foreach \x in {(0,2),(0,4),(0,6),(2,6),(2,4),(4,4),(6,4),(6,0),(4,0),(2,0),(2,2)} \draw \x circle (1pt);
\draw(1,6) node[above] {\tiny 1};
\draw(3,4) node[above] {\tiny 2};
\draw(5,4) node[above] {\tiny 3};
\draw(1,2) node[below] {\tiny 3};
\draw(3,0) node[below] {\tiny 2};
\draw(5,0) node[below] {\tiny 1};
\draw(1,5) node {\tiny A};
\draw(3,3) node {\tiny B};
\draw(4,1) node {\tiny C};
\draw(-1.5,3) node {\small (O4)};
\end{tikzpicture}
\end{minipage}
\caption{In each of the four (up to symmetry) 3-cylinder diagrams  there are 3 choices of a free cylinder according to Proposition \ref{P:onefree}. We fix a label for each of the four 3-cylinder diagrams here, and a label for each cylinder in each such diagram.  The shading will be explained later.}
\label{F:O1-4}
\end{figure}
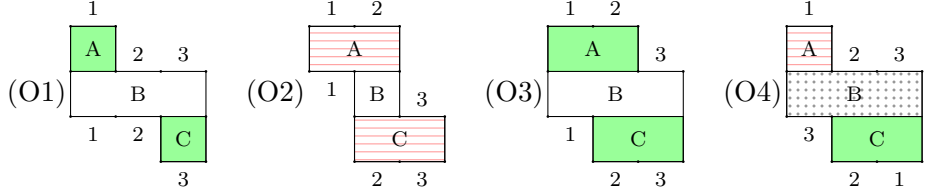
\begin{conv}
Let $N\in \{1,2,3,4\}$ and $L\in \{A,B,C\}$. We will say we are ``in case $ONL$" if $\cN$ contains a horizontally periodic translation surface whose cylinder diagram is of type $ON$ in Figure \ref{F:O1-4}, and cylinder $L$ is free. (As explained above, this necessarily means that the other two horizontal cylinders are $\cN$-parallel.) The ``$O$" in ``$ONL$" stands for ``odd"; we have chosen to include it to simplify any future attempts to unify our notation and analysis with that of \cite{NW} (an ``$H$" prefix could be used for the cases that occur for $\hyp$).
\end{conv}
We emphasize again that the cases are not mutually exclusive. Here $\cN$ is a fixed rank 2 affine invariant submanifold of $\odd$. We have already shown that for any such $\cN$, we must be in at least one of the cases above. Theorem \ref{T:main} will be established in the following sections by showing that some of the cases are impossible, and in the remaining cases $\cN=\prym$. 

\bold{Guide to the case analysis.} Lemma \ref{L:1} will rule out Cases O1A, O1C, O3A, O3C, O4C; Lemma \ref{L:2} will rule out Cases O2A, O2C, O4A; and Lemma \ref{L:O4B} will rule out Case O4B. Proposition \ref{P:cover} will give conditions under which, in Cases O1B, O2B and O3B, $\cN$ must be the Prym locus. Lemmas \ref{L:O1B} and \ref{L:O3B} verify these conditions for Cases O1B and O3B respectively, showing that $\cN$ is in fact the Prym locus. Lemma \ref{L:O2B} achieves the same conclusion by reduction to Case O2B.

Whenever an argument rules out several cases simultaneously, the reader may find it helpful to refer to pictures of one of the cases, and first trace through the argument in that particular case. We have chosen not to illustrate these arguments with specific cases because we wish to focus on the crucial common features in the various groups of cases.

\section{Impossible cases}\label{S:intro}

This section rules out all cases in Figure \ref{F:O1-4} which are shaded in any way (leaving only the ``white" cases O1B, O2B and O3B). 

\begin{lem}\label{L:1}
Let $\cN$ be an affine invariant submanifold, and let $M\in \cN$. Suppose that on $M$ there are horizontal cylinders $F$ and $L$, so that $F$ is free, and the bottom  boundary of $F$ is contained in the top  boundary of $L$. Suppose additionally that there is a horizontal saddle connection that is in the top  of $F$ and the bottom  of $L$. Then $L$ is free. 

The same statement holds with the roles of top and bottom reversed (so the top boundary of $F$ is contained in the bottom boundary of $L$, and there is a saddle connection in the bottom of $F$ and the top of $L$). 
\end{lem}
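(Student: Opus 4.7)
My plan is to argue by contradiction. Suppose $L$ is not $\cN$-free. Then by the corollary of the Cylinder Deformation Theorem noted in Section~2, $L$ must be $\cN$-parallel to some other horizontal cylinder $L'\neq L$, forming an equivalence class $\cE$ with $\{L,L'\}\subseteq\cE$. The key tools are two independent admissible tangent vectors to $\cN$ at $M$: the cylinder stretch $\dot a^{\{F\}}$ (available because $F$ is free) and $\dot a^{\cE}$ (available by Theorem~\ref{T:CDT}), each of which scales imaginary periods of the relevant cylinders.

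The adjacency hypothesis allows the construction of an absolute cycle $\gamma\in H_1(M;\bZ)$ that crosses $L$ and $F$ once each, but does not cross $L'$. Concretely, pick any saddle connection $\tau$ in the bottom of $F$, which by assumption also lies in the top of $L$. Starting at an endpoint of the distinguished $\sigma$ (which lies in the top of $F$ and the bottom of $L$), travel vertically upward through $L$ to an endpoint of $\tau$, then vertically upward through $F$ back to an endpoint of $\sigma$, closing the loop by concatenating with horizontal saddle connections along the top of $F$ if needed. This yields $\mathrm{Im}\,\omega(\gamma)=h_L+h_F$, with $\dot a^{\{F\}}$ contributing $i h_F$ to $\omega(\gamma)$ and $\dot a^{\cE}$ contributing $i h_L$ (since $\gamma$ is disjoint from $L'$).

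The contradiction arises from the discrepancy between the action of $\dot a^{\cE}$ on $\gamma$, which is indistinguishable from a stretch of $L$ alone, and the requirement that $L'\in\cE$ contribute nontrivially to the $\cE$-stretch elsewhere on the surface. I would make this precise by combining Proposition~\ref{P:P} (applied to $L$ and $L'$ against the equivalence class $\{F\}$ and against the classes associated with auxiliary cycles built from $\gamma$) with the cylinder shear $u_t^{\{F\}}$, which can be used to produce transverse cylinders on deformed surfaces that detect $L'$ but not $L$. Since the adjacency hypothesis is a property singling out $L$ among cylinders of $M$, the parallel partner $L'$ cannot match $L$'s interaction with $F$, and the resulting proportion equalities demanded by Proposition~\ref{P:P} force $L=L'$.

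The main obstacle is verifying that the vertical cycle $\gamma$ closes up as claimed: this requires tracking which zeros appear as endpoints of saddle connections in $\partial_{-}F\subseteq \partial_{+}L$ and of $\sigma$. If the two vertical paths do not align horizontally, a horizontal saddle connection in the top of $F$ suffices to close the loop, since the top of $F$ is itself a union of horizontal saddle connections. If the proportion-based contradiction proves technically awkward, I would fall back on a direct case analysis on the cylinder diagrams in Figure~\ref{F:8diagrams} in which the hypothesis can occur, but the intrinsic argument above should apply uniformly across cases. The top/bottom reversed version is proved by the identical argument after horizontal reflection of $M$.
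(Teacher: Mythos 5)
There is a genuine gap: your argument never actually produces a contradiction. The two instruments you set up do not do the work you ask of them. First, Proposition~\ref{P:P} compares \emph{cylinders} against an equivalence class of cylinders; applying it to $L$ and $L'$ against $\cE=\{F\}$ is vacuous, since distinct horizontal cylinders are disjoint and the statement reads $0=0$, and there is no ``class associated with'' your cycle $\gamma$ to feed into it, because $\gamma$ is a homology class, not a cylinder. Second, the claimed discrepancy --- that $\dot a^{\cE}$ acts on $\gamma$ like a stretch of $L$ alone while $L'$ ``must contribute elsewhere'' --- is never turned into an equation that fails: evaluating one tangent vector on one cycle and getting $ih_L$ is consistent with $L'$ existing anywhere else on the surface. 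Likewise, the assertion that ``the parallel partner $L'$ cannot match $L$'s interaction with $F$'' is exactly what needs to be proved; nothing in the hypotheses prevents $L'$ from meeting $F$ or from producing identical proportions against whatever transverse cylinders you construct, and the phrase ``transverse cylinders on deformed surfaces that detect $L'$ but not $L$'' is not backed by any construction.

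The fix is to upgrade your vertical cycle to a vertical \emph{cylinder} and then run Proposition~\ref{P:P} twice, which is what the paper does. Shear $F$ (possible since $F$ is free) and the equivalence class of $L$ (possible by Theorem~\ref{T:CDT}) so that the vertical trajectories leaving the distinguished saddle connection $s$ pass up through $L$, into $F$, and return to $s$, closing up into a vertical cylinder $V$ contained in $\overline{F\cup L}$. For any $V'$ that is $\cN$-parallel to $V$, each passage of $V'$ through $F$ forces a passage through the part $L_F$ of $L$ lying directly below $F$, giving $P(V',F)\le P(V,F)$ with equality only if $V'\subseteq\overline{L_F\cup F}$; Proposition~\ref{P:P} with $\cE=\{F\}$ forces equality, so the entire class $\cV$ of $V$ is trapped in $\overline{F\cup L}$. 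Now apply Proposition~\ref{P:P} with $\cE=\cV$ and $X=L$: since $P(L,\cV)>0$, any cylinder $\cN$-parallel to $L$ must meet $\cV$ and hence be $F$ or $L$; as $F$ is free, $L$ is free. Your cycle $\gamma$ is a reasonable first step toward $V$, but without the shearing that closes it into a cylinder, and without the two proportion arguments, the proof does not go through. (Your fallback of a case analysis over Figure~\ref{F:8diagrams} would also not prove the lemma as stated, which concerns an arbitrary affine invariant submanifold.)
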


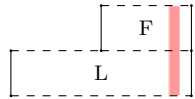
\begin{figure}[h!]
\centering
\begin{tikzpicture}[scale=0.30]
\draw [color=white, fill=red!40] (7,0) rectangle (7.5,4);
\draw (0,0)--(0,2);
\draw [style=dashed] (0,2) -- (4,2);
\draw (4,2)--(4,4);
\draw [style=dashed] (4,4) -- (8,4);
\draw (8,4)--(8,2)--(8,0);
\foreach \x in {(0,0),(0,2),(4,2),(4,4),(8,4),(8,2),(8,0)} \draw [fill=black] \x circle (1pt);
\draw [style=dashed] (0,0) -- (8,0);
\draw(4,1) node {\tiny L};
\draw(6,3) node {\tiny F};
\draw [style=dashed] (4,2)--(8,2);
\end{tikzpicture}
\caption{A special case of Lemma \ref{L:1}, see for instance $O1A$ or $O3A$ with $F=A, L=B$. After shearing $F$ and the equivalence class of $L$, there is a vertical cylinder $V$, shaded above, contained in the closure of $F$ union $L$. The dashed lines indicate that we do not have complete information on the boundaries of the horizontal cylinders. Lemma \ref{L:1} is somewhat more general than this picture, since the bottom of $F$ could be contained in the top of $L$ in a more complicated way.}
\label{F:L1}
\end{figure}

\begin{proof} 
After shearing $F$ and the cylinders in the equivalence class of $L$, there is a vertical cylinder $V$ that is contained in the closure of $F$ union $L$. (This uses the fact that there is a saddle connection $s$ in the top boundary of $F$ and the bottom boundary of $L$. First cylinders in the equivalence class of $L$ may be sheared so that some vertical trajectories which start at $s$ travel upwards through $L$ and into $F$. Then $F$ may be sheared so that these vertical trajectories hit $s$ and close up to form a cylinder.) 

Let $V'$ be a vertical cylinder that is $\cN$-parallel to $V$. Let $L_F$ denote the subset of $L$ of points whose upwards vertical trajectories enter $F$ before entering any other cylinder. (Thus $L_F$ may be thought of as the part of $L$ which lies directly below $F$.) Every time $V'$ goes through $F$, it must go down through $L_F$.  This gives the final equality in
\begin{eqnarray*}
P(V', F) &=& \frac{\Area(V'\cap F)}{\Area(V'\cap (M\setminus F))+\Area(V'\cap F)} \\
&\leq& \frac{\Area(V'\cap F)}{\Area(V'\cap L_F)+\Area(V'\cap F)}= P(V, F).
\end{eqnarray*}
(The final two quantities are equal  to the height of $F$ divided by the sum of the heights of $L$ and $F$.) Equality occurs if and only if $V'$ is contained in the closure of $L_F$ union $F$. By Proposition \ref{P:P} (with $\cE=\{F\}$, $X=V, Y=V'$), the proportion of $V'$ in $F$ is equal to the proportion of $V$ in $F$. Hence $V'$ is contained in the closure of $L_F$ union $F$. 

Thus the equivalence class $\cV$ of all cylinders $\cN$-parallel to $V$ is contained in the closure of $F$ union $L$. By Proposition \ref{P:P} (with $\cE=\cV$, $X=L$, and $Y$ any cylinder $\cN$-parallel to $L$), the proportion of $L$ in $\cV$ is equal to the proportion of any cylinder $\cN$-parallel to $L$ in $\cV$.  Hence, we see that $L$ is not $\cN$-parallel to any horizontal cylinder except itself and possibly $F$. Since $F$ is free, $L$ cannot be $\cN$-parallel to $F$. We conclude that $L$ is free. 
\end{proof}

\begin{cor}
In the analysis of rank 2 affine invariant submanifolds of $\odd$, the following cases are impossible: O1A, O1C, O3A, O3C, O4C. 
\end{cor}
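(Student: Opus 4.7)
The plan is to dispatch all five cases by pairing the designated free cylinder with cylinder $B$ and invoking Lemma \ref{L:1} (or its reversed version) to force $B$ to be free as well, which contradicts Proposition \ref{P:onefree}.

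For cases O1A and O3A, I would set $F = A$ and $L = B$. In both of these diagrams, cylinder $A$ sits directly on top of $B$: the bottom boundary of $A$ is an interior horizontal segment of the polygon (with endpoints at two zeros of the polygon) lying inside the top boundary of $B$. Moreover, the saddle connection labeled ``1'' is identified between the top of $A$ and a portion of the bottom of $B$, so it lies in both the top of $F$ and the bottom of $L$. Lemma \ref{L:1} then declares $B$ free.

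For cases O1C, O3C, and O4C, I would set $F = C$ and $L = B$ and apply the reversed version of Lemma \ref{L:1}. In each of these diagrams cylinder $C$ sits directly below $B$, so the top of $C$ is an interior horizontal segment of the polygon lying within the bottom boundary of $B$. A horizontal saddle connection shared between the bottom of $C$ and the top of $B$ is provided by the identified edge ``3'' in cases O1C and O3C, and by the identified edge ``2'' in case O4C. Again the reversed lemma concludes that $B$ is free.

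In every case we produce two free horizontal cylinders on the same surface, contradicting Proposition \ref{P:onefree} (which demands exactly one free cylinder among the three). The only substantive work is the combinatorial verification, in each of the five cases, that the top and bottom boundaries of $F$ and $B$ interact as the hypotheses of Lemma \ref{L:1} demand; this is a direct reading of the pictures in Figure \ref{F:O1-4} together with the edge identifications indicated by the labels. I expect no conceptual obstacle, only the need to keep track of which labeled edges glue the free cylinder's relevant boundary to the corresponding boundary of $B$.
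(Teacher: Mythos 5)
Your argument is correct and matches the paper's proof essentially verbatim: in each of the five cases one takes $L=B$ and $F$ the designated free cylinder, applies Lemma \ref{L:1} (or its top/bottom-reversed form) to conclude $B$ is free, and contradicts the case hypothesis that the other two cylinders are $\cN$-parallel. Your identification of the shared saddle connections (edge $1$ for O1A/O3A, edge $3$ for O1C/O3C, edge $2$ for O4C) checks out against Figure \ref{F:O1-4}.
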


These cases are shaded solid in Figure \ref{F:O1-4}. 

\begin{proof}
In all five cases, set $L=B$. In each case, let $F$ be the choice of free cylinder, for example $F=A$ in case O1A. Then in each case Lemma \ref{L:1} gives that $L$ is free. However in each of these cases $F$ is the only free horizontal cylinder. 
\end{proof}

\begin{lem}\label{L:2}
Let $\cN$ be an affine invariant submanifold, and suppose $M\in \cN$ has two horizontal cylinders $K$ and $L$ such that the closure of the union of $K$ and $L$ is not the whole surface. Suppose that there is a saddle connection $s$ which is both on the top and bottom of $L$. Suppose that the top boundary of $L$ is equal to $s$ union the bottom boundary of $K$. Then $\{K,L\}$ is not an equivalence class of $\cN$--parallel cylinders. 

The same statement holds with the roles of top and bottom reversed. 
\end{lem}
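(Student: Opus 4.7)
The approach is by contradiction: suppose $\{K,L\}$ is an $\cN$-equivalence class. Using Theorem \ref{T:CDT} to shear this class, I align the top occurrence of $s$ on $\partial_+ L$ directly above the bottom occurrence on $\partial_- L$, producing a vertical cylinder $V$ contained entirely in $L$, of width $|s|$ and height $h_L$; explicitly, $V$ fills the vertical strip $L_s \subset L$ lying below (and above) $s$.

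Let $\cV$ be the $\cN$-class of $V$. Since $V \subset L \subset K \cup L$, Proposition \ref{P:P} applied with $\cE = \{K,L\}$ forces $V' \subset \overline{K \cup L}$ for every $V' \in \cV$. A second application of Proposition \ref{P:P}, this time with $\cE = \cV$, $X=K$, $Y=L$, gives $w_K/c_K = w_L/c_L$, where $w_K,w_L$ denote the total widths of $\cV$ in $K$ and $L$ respectively and $c_K,c_L$ are the circumferences. The hypothesis $\partial_+ L = \{s\} \cup \partial_- K$ gives the combinatorial identity $c_L = |s| + c_K$. The key counting identity is $w_L = |s| + w_K$: each column of $\cV$ in $L$ exits upward either through $s$ or through $\partial_- K$; the total exit width through $s$ is exactly $|s|$, because $V$ already fills $L_s$ and columns of distinct cylinders in $\cV$ are pairwise disjoint, leaving no room for further columns through $s$; and the total exit width through $\partial_- K$ equals the total entry width into $K$ from below, which is $w_K$. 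Substituting into the proportion equation forces $w_K = c_K$, and hence $w_L = c_L$.

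The equality $w_K = c_K$ says $\cV$ covers $K$ in full measure. Since $\cV \subset \overline{K \cup L}$, every saddle connection in $\partial_+ K$ must have $L$ immediately above, so $\partial_+ K \subset \partial_- L$. Symmetrically, $w_L = c_L$ forces $\partial_- L \setminus \{s\} \subset \partial_+ K$, hence $\partial_- L = \{s\} \cup \partial_+ K$. Combined with the given $\partial_+ L = \{s\} \cup \partial_- K$ and $\partial_- K \subset \partial_+ L$, every horizontal saddle connection incident to $K$ or $L$ borders only cylinders of $\{K,L\}$. Therefore $\overline{K \cup L}$ is a closed subsurface of $M$, so connectedness forces $\overline{K \cup L} = M$, contradicting the hypothesis.

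The main obstacle I anticipate is verifying the identity $w_L = |s| + w_K$: the delicate point is that $V$, being of width exactly $|s|$, precisely exhausts $L_s$ and thereby prevents any other cylinder of $\cV$ from exiting $L$ through $s$. Once this identity is established, the arithmetic and the boundary-combinatorial argument conclude the proof. The statement with top and bottom reversed follows by reflecting the surface vertically and repeating the argument.
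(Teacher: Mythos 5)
Your proof is correct and follows essentially the same route as the paper's: shear to produce the vertical cylinder $V$ through $s$, use Proposition \ref{P:P} with $\cE=\{K,L\}$ to trap the whole class $\cV$ in $\overline{K\cup L}$, and then compare $P(K,\cV)$ with $P(L,\cV)$ via the counting identities $c_L=|s|+c_K$ and $w_L=|s|+w_K$. The only difference is organizational: the paper derives the strict inequality $P(K,\cV)<P(L,\cV)$ directly from the observation that $\cV$ cannot cover $K\cup L$, whereas you run the contrapositive, letting Proposition \ref{P:P} force $w_K=c_K$ and then showing full coverage would make $\overline{K\cup L}$ all of $M$.
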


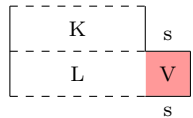
\begin{figure}[h!]
\centering
\begin{tikzpicture}[scale=0.30]
\draw [color=white, fill=red!40] (6,0) rectangle (8, 2);
\draw (0,0)--(0,4);
\draw (6,4)--(6,2);
\draw (8,2)--(8,0); 
\draw (6,2)--(8,2); 
\draw (6,0)--(8,0);
\draw [style=dashed] (0,4)--(6,4);
\draw [style=dashed] (0,2)--(6,2);
\draw [style=dashed] (0,0)--(6,0);
\draw(7,2) node[above] {\tiny s};
\draw(7,0) node[below] {\tiny s};
\draw(7,1) node {\tiny V};
\draw(3,3) node {\tiny K};
\draw(3,1) node {\tiny L};
\end{tikzpicture}
\caption{A special case of Lemma \ref{L:2}, see for instance case $O2$ with $K=B$ and $L=C$, or case $O4$ with $K=C$ and $L=B$.}
\end{figure}

\begin{proof}
Suppose by contradiction that $\{K,L\}$ is an equivalence class of $\cN$--parallel cylinders.

After shearing the whole surface, there is a vertical cylinder $V$ that contains $s$. Let $\cV$ be the equivalence class of cylinders $\cN$-parallel to $V$. By Proposition \ref{P:P} (with $\cE=\{K, L\}$, $X=V$, and $Y\in \cV$), the proportion of $V$ in $\cE$ must be equal to the proportion of any cylinder $Y$ that is $\cN$-parallel to $V$. Since we have $P(V,\cE)=1$, it follows $P(Y,\cE)=1$. Hence, each cylinder in $\cV$ is contained in the closure of $K$ union $L$. 

Since $K$ union $L$ is not the whole surface, we see that $\cV$ does not cover $K$ union $L$.  It follows by an argument similar to that in the proof of the previous lemma that $P(K,\cV)<P(L, \cV)$. By Proposition \ref{P:P} this contradicts the assumption that $K$ and $L$ are $\cN$-parallel. 
\end{proof}

\begin{cor}
In the analysis of rank 2 affine invariant submanifolds of $\odd$, the following cases are impossible: O2A, O2C, O4A. 
\end{cor}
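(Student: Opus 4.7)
The plan is to apply Lemma \ref{L:2} directly in each of the three cases O2A, O2C, and O4A, using the pair of $\cN$-parallel horizontal cylinders as $\{K, L\}$. In each case $K \cup L$ omits the free horizontal cylinder, so it cannot be the whole surface, and the only remaining task is to exhibit a saddle connection $s$ lying on both the top and the bottom of one of these cylinders, together with the boundary decomposition required by the lemma.

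The key observation is that each of the diagrams O2 and O4 contains a horizontal edge that is self-identified within a single cylinder. In diagram O2, the label ``$1$'' glues the top of cylinder $A$ to its own bottom, and the label ``$3$'' glues the top of cylinder $C$ to its own bottom. In diagram O4, the label ``$3$'' glues a segment on the top of cylinder $B$ to a segment on its bottom. Each such self-identification produces a single saddle connection that appears on both boundaries of one cylinder, which is precisely the structural ingredient needed by Lemma \ref{L:2}.

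The next step is to pick the right data in each case. For O2A (parallel pair $\{B, C\}$), I would take $L = C$, $K = B$, and $s$ equal to the ``$3$''-edge, and check that the top of $C$ decomposes as $s$ together with the internal horizontal segment that coincides with the bottom of $B$. For O2C (parallel pair $\{A, B\}$), I would take $L = A$, $K = B$, and $s$ equal to the ``$1$''-edge, then apply the reversed form of the lemma: the bottom of $A$ decomposes as $s$ together with the internal segment that is the top of $B$. For O4A (parallel pair $\{B, C\}$), I would take $L = B$, $K = C$, and $s$ equal to the ``$3$''-edge, again using the reversed form; the bottom of $B$ is $s$ (on $x \in [0,2]$) together with the internal segment at $x \in [2,6]$ which is the top of $C$. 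In each case Lemma \ref{L:2} then asserts that $\{K, L\}$ cannot be an equivalence class of $\cN$-parallel cylinders, contradicting the assumption of the case.

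The only step requiring any attention is the combinatorial bookkeeping: one must carefully read off the horizontal boundaries of each cylinder in each diagram and determine which labeled edges are self-identified within a cylinder versus cross-identified between distinct cylinders. This is straightforward from Figure \ref{F:O1-4}, but it is the main (non-substantive) obstacle, since the entire proof hinges on locating the right self-identified saddle connection in each case.
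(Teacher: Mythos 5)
Your proposal is correct and follows exactly the paper's argument: the paper's proof makes the identical assignments ($K=B,L=C$ for O2A; $K=B,L=A$ for O2C; $K=C,L=B$ for O4A) and invokes Lemma \ref{L:2}. Your identification of the self-identified saddle connections (edge $3$ in O2 and O4, edge $1$ in O2) and of when the reversed form of the lemma is needed is accurate and simply makes explicit the combinatorial verification the paper leaves to the reader.
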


These cases are shaded with horizontal lines in Figure \ref{F:O1-4}. 

\begin{proof}
In case O2A, set $K=B, L=C$. In case O2C, set $K=B, L=A$. In case O4A, set $K=C, L=B$. The assumption of these cases is that $\{K,L\}$ is an equivalence class of $\cN$-parallel cylinders, but Lemma \ref{L:2} shows that this is impossible. 
\end{proof}

\begin{lem}\label{L:O4B}
In the analysis of rank 2 affine invariant submanifolds of $\odd$, case O4B is impossible. 
\end{lem}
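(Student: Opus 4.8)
The plan is to work in diagram $O4$ of Figure \ref{F:O1-4} with $B$ free and $\{A,C\}$ an equivalence class of $\cN$-parallel cylinders, and to force a contradiction out of Proposition \ref{P:P} by producing a vertical cylinder that meets $C$ but not $A$. First I would fix notation: write $1,2,3$ for the three horizontal saddle connections, $x_i$ for the length of connection $i$, and $h_A,h_B,h_C$ for the heights of $A,B,C$. Reading the diagram, the top of $A$ is connection $1$, the bottom of $C$ is connection $2$ (on the left) together with connection $1$ (on the right), the top of $C$ is the middle-and-right part of the bottom of $B$, and connection $3$ occurs on \emph{both} the top and the bottom of $B$. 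In particular the circumferences are $w_A=x_1$ and $w_C=x_1+x_2$, so $w_A\neq w_C$; this asymmetry is the feature that I expect distinguishes $O4B$ from the surviving cases $O1B,O2B,O3B$.

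Next I would shear (using that $B$ is free and $\{A,C\}$ is an equivalence class) so that there is a vertical cylinder $V$ through connection $2$. A vertical trajectory leaving connection $2$ on the bottom of $C$ runs up through $C$, crosses the interface into $B$, and returns to connection $2$ on the top of $B$, which is glued to its copy on the bottom of $C$; hence $V$ lies in the closure of $B\cup C$ and is disjoint from $A$. Let $\cV$ be the equivalence class of $V$. Since $V$ covers the part of $C$ above connection $2$ we have $P(C,\cV)>0$, while Proposition \ref{P:P} applied to the $\cN$-parallel pair (with $X=A$, $Y=C$, $\cE=\cV$) gives $P(A,\cV)=P(C,\cV)>0$. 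If no cylinder of $\cV$ met $A$ we would get $P(A,\cV)=0$, a contradiction; so some vertical cylinder $V'\in\cV$ must pass through $A$. Thus $V$, which is trapped in $B\cup C$, is $\cN$-parallel to a vertical cylinder through $A$.

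From here I would extract metric information. Proposition \ref{P:P} applied to $V$ and $V'$ with $\cE=\{B\}$ forces $P(V',\{B\})=P(V,\{B\})=h_B/(h_B+h_C)$, and tracing $V'$ through $A$, the sc-$1$ part of $C$, and $B$ then pins the heights together, forcing $h_A=h_C$ and $x_1=x_3$; equivalently, writing the common proportion as $\Area(B)/\Area(M)$ once all vertical cylinders lie in the single class $\cV$, one gets the relation $x_1h_A=x_3h_C$ (using $w_B-w_C=x_3$ and $w_A=x_1$). The point of this computation is to show that the configuration in which $V$ is parallel to a cylinder through $A$ is extremely rigid.

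The main obstacle is precisely this last configuration, where all the vertical cylinders collapse into one $\cN$-parallel class: there every instance of Proposition \ref{P:P} is satisfied, so a soft comparison of proportions no longer yields a contradiction, and indeed the constraints above are mutually consistent. The route I would take to eliminate it is to use that $A\sim_\cN C$ must persist throughout $\cN$, so the ratio $w_A:w_C=x_1:(x_1+x_2)$ stays constant, whereas the free cylinder $B$ can be sheared independently; shearing $B$ breaks the vertical periodicity, and feeding the perturbed vertical direction into Theorem \ref{T:SW} produces a \emph{new} periodic direction whose newly created cylinders need not be $\cN$-parallel to the old ones. I expect that the resulting cylinder diagram, together with the persisting relation $A\sim_\cN C$ and the circumference asymmetry $w_A\neq w_C$, matches one of the configurations already excluded by Lemma \ref{L:1} or Lemma \ref{L:2}. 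Making this reduction precise — in particular identifying which cylinder becomes free in the new direction and checking it falls into an excluded case rather than looping back to $O4B$ — is the hard part of the argument.
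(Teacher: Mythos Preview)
Your setup is right and your vertical cylinder $V$ through saddle connection $(2)$, lying in the closure of $B\cup C$ and missing $A$, is exactly the object the paper uses. The gap is in the endgame: you correctly observe that the only obstruction to an immediate contradiction is the possibility that $V$ is \emph{not} free, i.e.\ that some $V'\in\cV$ crosses $A$; but your proposed way around this---shear $B$, invoke Smillie--Weiss, and hope the new periodic direction lands in a case already excluded by Lemma~\ref{L:1} or~\ref{L:2}---is not carried out, and there is no mechanism preventing the new direction from landing right back in $O4B$ (or in $O1B$, $O2B$, $O3B$), so as written this is not a proof.

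The idea you are missing is the second cylinder in Figure~\ref{F:O4B}. You yourself noted that saddle connection $(3)$ lies on \emph{both} the top and the bottom of $B$; this means there is a cylinder $K$, in some transverse direction, contained entirely in the closure of $B$ and containing $(3)$. Proposition~\ref{P:P} with $\cE=\{B\}$ shows $K$ is free (any parallel cylinder would have to meet $A$ or $C$). Now use that $K$ is free to \emph{deform} it---stretch it so that $\ell(3)=\ell(1)$. After this normalisation, a direct check shows that every vertical cylinder other than $V$ must cross $K$; hence Proposition~\ref{P:P} with $\cE=\{K\}$ forces $V$ itself to be free. Once $V$ is free, your own first step gives the contradiction immediately: $P(A,\{V\})=0\neq P(C,\{V\})$ against $A\sim_\cN C$. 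No Smillie--Weiss, no passage to another diagram.
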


\begin{figure}[h!]
\centering
\begin{tikzpicture}[scale=0.5]
\draw [color=white, pattern=vertical lines, pattern color=red!40] (2,0) rectangle (4,4);
\draw [color=white, fill=blue!40] (0,2)--(2,2)--(0,4)--cycle;
\draw [color=white, fill=blue!40] (6,4)--(6,2)--(4,4)--cycle;
\draw (0,2)--(0,4)--(-1,6)--(1,6)--(2,4)--(4,4)--(6,4)--(6,2)--(6,0)--(4,0)--(2,0)--(2,2)--cycle;
\foreach \x in {(0,2),(0,4),(-1,6),(1,6),(2,4),(4,4),(6,4),(6,2),(6,0),(4,0),(2,0),(2,2)} \filldraw[fill=black] \x circle (2pt);
\draw(0,6) node[above] {\tiny 1};
\draw(3,4) node[above] {\tiny 2};
\draw(5,4) node[above] {\tiny 3};
\draw(1,2) node[below] {\tiny 3};
\draw(3,0) node[below] {\tiny 2};
\draw(5,0) node[below] {\tiny 1};
\draw(-1.5,3) node {\small (O4)};
\draw(3,2) node {\tiny V};
\draw (0.5,2.5) node {\tiny K} (5.5,3.5) node {\tiny K};
\end{tikzpicture}
\caption{In the proof of Lemma \ref{L:O4B}, eventually there is a vertical cylinder $V$ shaded with vertical lines above, and a cylinder $K$ shaded solid  above.}
\label{F:O4B}
\end{figure}
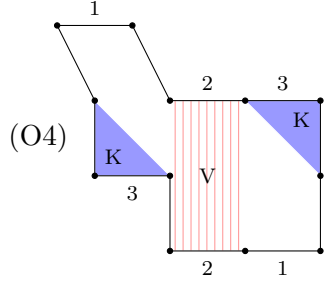

\begin{proof}
After shearing $B$ and $\{A,C\}$ there is a vertical cylinder $V$ which contains the saddle connection (2). There is also a cylinder $K$ which is contained in $B$, and contains the saddle connection (3).  See Figure \ref{F:O4B}. 

By Proposition \ref{P:P} (with $\cE=\{B\}, X=K$ and $Y$ any cylinder $\cN$-parallel to $K$), the proportion of $K$ in $\cE$, which is one, and the proportion of any cylinder $\cN$-parallel to $K$, which cannot be one, we see that $K$ is free. By deforming $K$ (stretching it horizontally using a product of cylinder shear and cylinder stretch), we may assume that the length of (3) is equal to the length of (1).

Since any vertical cylinder other than $V$ must intersect $K$, Proposition \ref{P:P} (with $\cE=\{K\}, X=V$ and $Y$ any cylinder $\cN$-parallel to $V$) implies that $V$ is free because the proportion of $V$ in $\{K\}$ is zero, while the proportion of any cylinder $\cN$-parallel to $V$ cannot be zero. Then Proposition \ref{P:P} (with $\cE=\{V\}, X=C, Y=A$), implies that the proportion of $A$ in $\{V\}$, which is zero, equals the proportion of $C$ in $\{V\}$, which is non-zero, and contradicts the assumption that $A$ and $C$ are $\cN$-parallel.
\end{proof}

\section{The Prym locus}\label{S:intro}

This section describes the conditions under which, in the remaining cases, we may assume that $\cN$ is the Prym locus $\prym$.  


\begin{prop}\label{P:cover}
In cases O1B, O2B and O3B, suppose that the cylinders $A$ and $C$ are affinely equivalent via an affine map with derivative -1, and assume this map sends the saddle connection (1) to the saddle connection (3). Then $\cN=\prym$. 
\end{prop}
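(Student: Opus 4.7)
The plan is to construct an explicit Prym involution on $M$, deduce $M \in \prym$, and then conclude $\cN = \prym$ by comparing tangent spaces at $M$.

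First, I build a flat involution $\tau : M \to M$ with derivative $-I$ piecewise: on $A$ and $C$ I take $\tau$ to be the given affine equivalence and its inverse; on $B$, which must be sent to itself since $\tau$ is an involution swapping $A$ and $C$, I take $\tau|_B$ to be the half-turn about the center of $B$. In each of cases O1B, O2B, O3B, a direct coordinate check shows that all three pieces agree and are in fact restrictions of a single affine map of the form $(x,y)\mapsto(c-x,d-y)$ for appropriate $(c,d)$, so the compatibility conditions on shared boundaries --- the internal saddle connections between cylinders, the labelled identifications $(1)$, $(2)$, $(3)$, and the vertical-edge gluings within each cylinder --- are automatically satisfied. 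Counting the fixed points of $\tau$ --- the unique zero, the center of $B$, and two further fixed points coming from the interaction of the half-turn on $B$ with the left-right identification of $B$ and with a self-identified horizontal saddle connection on the boundary of $B$ --- gives exactly four, so by Riemann--Hurwitz $M/\tau$ has genus $1$, $\tau$ is a Prym involution, and $M \in \prym$.

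For the conclusion $\cN = \prym$, note that both are rank-$2$ affine invariant submanifolds of $\odd$, so their tangent spaces at $M$ both have complex dimension $4$, and $T_M \prym$ is precisely the $(-1)$-eigenspace of $\tau^*$ on $H^1(M, \Sigma; \bC)$. I show $T_M \cN \subseteq T_M \prym$ by verifying that $\tau^*$ acts as $-1$ on a spanning set of $T_M \cN$: on the $\G$-tangent directions this is immediate from $\tau^* \omega_M = -\omega_M$; on the cylinder shears produced by Theorem~\ref{T:CDT} from the $\cN$-equivalence classes $\{A,C\}$ and $\{B\}$, both horizontally and (after conjugating by $r_{\pi/2}$) vertically, this follows because $\tau$ permutes each equivalence class to itself with derivative $-I$, while the rigidity of the given affine equivalence forces $A$ and $C$ to have equal heights. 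Dimension equality then gives $T_M \cN = T_M \prym$, and since both manifolds are connected and locally determined by their tangent spaces, $\cN = \prym$.

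The main obstacles I anticipate are (i) the case-by-case coordinate compatibility check for $\tau$ in each of the three cases, and (ii) ensuring the spanning set above is complete: the $\G$-directions together with the horizontal cylinder shears alone span only three of the four complex dimensions of $T_M \cN$, because $\Im(\omega_M)$ is already a combination of the two horizontal shears, making the vertical cylinder analysis essential rather than optional.
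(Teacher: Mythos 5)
Your construction of the involution and the fixed-point count matches the first half of the paper's proof, and your identification of $T_M\prym$ with the $(-1)$-eigenspace of $\tau^*$ on $H^1(M,\Sigma;\bC)$ is correct. The problem is the step where you conclude $T_M\cN\subseteq T_M\prym$. Since $T^{\bR}_M(\cN)$ is only $4$-dimensional, what you actually need is four independent anti-invariant vectors inside it, and you correctly observe that $\Re(\omega)$, $\Im(\omega)$ and the twist class of $\{B\}$ give only three (the twist class of $\{A,C\}$ being dependent on these). The fourth direction is where the argument breaks down as written: you invoke vertical cylinder shears, but you have not shown (a) that the surface $M$ supplied by the assumption is vertically periodic with a vertical equivalence-class structure producing a twist class independent of the first three, and (b) that $\tau$ preserves the relevant vertical equivalence class. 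Point (b) is not automatic and risks circularity: ``free'' and ``$\cN$-parallel'' are defined via $T_M\cN$, so you cannot argue that $\tau$ permutes an $\cN$-equivalence class to itself by appealing to $\tau^*$-invariance of $T_M\cN$ --- that is what you are trying to prove. You would instead need a geometric, case-by-case verification that $\tau$ fixes the specific vertical cylinders setwise, together with an explicit independence computation; none of this is carried out. Your proposal thus concentrates all the difficulty into exactly the step it leaves unproved.

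The paper avoids this entirely with a softer argument: since $B$ is free and $\tau$-invariant, one can stretch $B$ within $\cN\cap\prym$ until the moduli of $A$ and $B$ are incommensurable; the Veech dichotomy then shows the resulting surface is not a lattice surface, so by Theorem \ref{T:EMM} its orbit closure is an affine invariant submanifold of complex dimension $4$ (not $2$) contained in both $\cN$ and $\prym$, and a dimension count forces $\cN=\prym$. If you want to salvage your tangent-space route, the cleanest fix is to replace the hunt for a fourth explicit direction by exactly this kind of orbit-closure dimension argument.
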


In the assumption of the proposition, the involution is not required a priori to extend to the whole surface. The proposition will show this, and this will imply that the surface is in the Prym locus.

In terms of our pictures, the assumption says that the top rectangle can be rotated by $\pi$ and placed exactly on top of the bottom cylinder, so that saddle connection (1) ends up exactly on top of (3). 

\begin{proof}
Let $M\in \cN$ be the translation surface of the specified type, depending on the case. Under the given condition, we see that $M$ admits an involution  which fixes $B$ and exchanges $A$ and $C$. This involution has four fixed points as indicated in Figure~\ref{fig:prym-inv}, and $M$ can be  easily seen to lie in the Prym locus $\prym$. 

By stretching the middle cylinder $B$ (which is free), we may assume that the ratio of the modulus of $A$ to the modulus of $B$ is irrational. Hence the Veech Dichotomy \cite{V} gives that $M$ does not lie on a closed orbit. Thus the orbit closure of $M$ is a 4 (complex) dimensional affine invariant submanifold, which must be contained in both $\cN$ and $\prym$. Since both $\cN$ and $\prym$ are 4 dimensional, we get that $\cN=\prym$.
\begin{figure}[htb]
\begin{minipage}[t]{0.32\linewidth}
\centering
\begin{tikzpicture}[scale=0.33]
\draw (0,2)--(0,4)--(0,6)--(2,6)--(2,4)--(4,4)--(6,4)--(6,2)--(6,0)--(4,0)--(4,2)--(2,2)--cycle;
\foreach \x in {(0,2),(0,4),(0,6),(2,6),(2,4),(4,4),(6,4),(6,2),(6,0),(4,0),(4,2),(2,2)} \filldraw[fill=black] \x circle (3pt);
\draw(1,6) node[above] {\tiny 1};
\draw(3,4) node[above] {\tiny 2};
\draw(5,4) node[above] {\tiny 3};
\draw(1,2) node[below] {\tiny 1};
\draw(3,2) node[below] {\tiny 2};
\draw(5,0) node[below] {\tiny 3};
\draw(-1.5,3) node {\small (O1)};
\foreach \x in {(3,3)} \draw [fill=black] \x circle (4pt);
\foreach \x in {(3,2),(3,4))} \draw [fill=red!40] \x circle (4pt);
\foreach \x in {(0,3),(6,3)} \draw [fill=green!40] \x circle (4pt);
\draw [->] (3cm+10pt,3) arc (0:300:10pt);
\end{tikzpicture}
\end{minipage}
\begin{minipage}[b]{0.32\linewidth}
\centering
\begin{tikzpicture}[scale=0.33]
\draw (0,4)--(0,6)--(2,6)--(4,6)--(4,4)--(4,2)--(6,2)--(6,0)--(4,0)--(2,0)--(2,2)--(2,4)--cycle;
\foreach \x in {(0,4),(0,6),(2,6),(4,6),(4,4),(4,2),(6,2),(6,0),(4,0),(2,0),(2,2),(2,4)} \filldraw[fill=black] \x circle (3pt);
\draw(1,6) node[above] {\tiny 1};
\draw(3,6) node[above] {\tiny 2};
\draw(5,2) node[above] {\tiny 3};
\draw(1,4) node[below] {\tiny 1};
\draw(3,0) node[below] {\tiny 2};
\draw(5,0) node[below] {\tiny 3};
\draw(-1.5,3) node {\small (O2)};
\foreach \x in {(3,3)} \draw [fill=black] \x circle (4pt);
\foreach \x in {(3,0),(3,6))} \draw [fill=red!40] \x circle (4pt);
\foreach \x in {(2,3),(4,3)} \draw [fill=green!40] \x circle (4pt);
\draw [->] (3cm+10pt,3) arc (0:300:10pt);
\end{tikzpicture}
\end{minipage}
\begin{minipage}[b]{0.32\linewidth}
\centering
\begin{tikzpicture}[scale=0.33]
\draw (0,2)--(0,4)--(0,6)--(2,6)--(4,6)--(4,4)--(6,4)--(6,2)--(6,0)--(4,0)--(2,0)--(2,2)--cycle;
\foreach \x in {(0,2),(0,4),(0,6),(2,6),(4,6),(4,4),(6,4),(6,0),(4,0),(2,0),(2,2)} \filldraw[fill=black] \x circle (3pt);
\draw(1,6) node[above] {\tiny 1};
\draw(3,6) node[above] {\tiny 2};
\draw(5,4) node[above] {\tiny 3};
\draw(1,2) node[below] {\tiny 1};
\draw(3,0) node[below] {\tiny 2};
\draw(5,0) node[below] {\tiny 3};
\draw(-1.5,3) node {\small (O3)};
\foreach \x in {(3,3)} \draw [fill=black] \x circle (4pt);
\foreach \x in {(3,0),(3,6))} \draw [fill=red!40] \x circle (5pt);
\foreach \x in {(0,3),(6,3)} \draw [fill=green!40] \x circle (5pt);
\draw [->] (3cm+10pt,3) arc (0:300:10pt);
\end{tikzpicture}
\end{minipage}
\caption{Under appropriate assumptions on the top and bottom cylinders, the surfaces in the cases above admit involutions with four fixed points. These fixed points are the zero, together with the large dots drawn above. In these pictures, the involution is given by rotation by $\pi$ about the central point.}
\label{fig:prym-inv}
\end{figure}
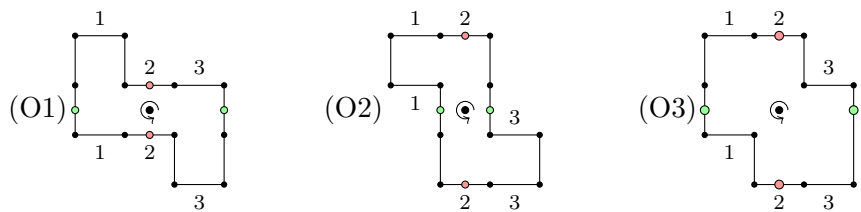

\end{proof}

\section{Cases O1B and O2B, and two slit tori}\label{S:intro}

In this section we show that cases O1B and O2B imply  $\cN=\prym$.  

\begin{lem}\label{L:P12}
Let $P_1$ and $P_2$ be two flat slit tori, as shown in Figure \ref{F:P12}: The corresponding vertical edges in both pictures are assumed to have the same length. Suppose that $\Area(P_1) \geq \Area(P_2)$, and for every cylinder $L_1$ on $P_1$ there is a cylinder $L_2$ on $P_2$ of the same slope, with 
$$\frac{\Area(L_1)}{\Area(P_1)}= \frac{\Area(L_2)}{\Area(P_2)}.$$
Then $P_2$ is isometric to $P_1$. 
\end{lem}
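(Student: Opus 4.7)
The plan is to show that $P_1$ and $P_2$ share the same underlying lattice torus; once this is established, sameness of the slit length will force them to be translates on that common torus, hence isometric as translation surfaces. Write $P_i = T_i\setminus s_i$, where $T_i = \bR^2/\Lambda_i$ is flat of area $A_i$ and $s_i$ is a vertical segment of length $h$, and set $r := A_1/A_2 \geq 1$.

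The first step is a cylinder-area computation. For any primitive vector $\alpha = (\alpha_x,\alpha_y) \in \Lambda_i$, the torus $T_i$ is filled by a single cylinder in direction $\alpha$ of circumference $|\alpha|$ and height $A_i/|\alpha|$. Viewed inside this cylinder, the vertical slit $s_i$ projects perpendicularly to $\alpha$ onto a segment of length $h|\alpha_x|/|\alpha|$, so the closed leaves of direction $\alpha$ that avoid the slit form a single cylinder on $P_i$ of area
$$A_i - h|\alpha_x|$$
(and no cylinder exists when this quantity is nonpositive). Substituting this into the proportionality hypothesis collapses it to $|\alpha^{(2)}_x| = |\alpha^{(1)}_x|/r$; since $\alpha^{(1)}$ and $\alpha^{(2)}$ are parallel, this forces $\alpha^{(2)} = \pm r^{-1}\alpha^{(1)}$, and in particular $r^{-1}\alpha^{(1)} \in \Lambda_2$ for every primitive $\alpha^{(1)} \in \Lambda_1$ with $h|\alpha^{(1)}_x| < A_1$.

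Next I would upgrade this to the lattice inclusion $r^{-1}\Lambda_1 \subseteq \Lambda_2$. The set $\{v_x : v\in\Lambda_1\}$ is a subgroup of $\bR$, hence is either cyclic (in which case $\Lambda_1$ contains a vertical vector) or dense, so primitives of $\Lambda_1$ with $|v_x|$ arbitrarily small exist. Picking such a $v$ with $|v_x| < A_1/h$, extending to a basis $\{v,w\}$, and replacing $w$ by $w-mv$ for the optimal integer $m$ yields $|(w-mv)_x|\leq |v_x|/2$ when $v_x \neq 0$, so both basis vectors lie in the allowed set (the residual case where the chosen $v$ is vertical is handled by selecting a second nearly-vertical primitive of different slope). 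Comparing covolumes then gives
$$[\Lambda_2 : r^{-1}\Lambda_1] = \frac{\det(r^{-1}\Lambda_1)}{\det(\Lambda_2)} = \frac{r^{-2}A_1}{A_2} = r^{-1}.$$
A lattice index is a positive integer, so $r^{-1}\geq 1$; combined with $r\geq 1$ this forces $r=1$ and $\Lambda_1=\Lambda_2$. Since the slits are vertical of common length $h$ on this shared torus, $P_2$ is a translate of $P_1$ and hence isometric to it.

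The main technical friction I anticipate is making precise the claim that the primitives with $|v_x|<A_1/h$ generate all of $\Lambda_1$ (rather than merely a proper sublattice), together with the vertical-slope edge case; once that is settled, everything reduces to the clean covolume identity above.
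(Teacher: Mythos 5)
Your proof is correct, and its core mechanism is the same as the paper's: establish the lattice inclusion $r^{-1}\Lambda_1 \subseteq \Lambda_2$ (the paper writes $r$ for your $r^{-1}$) and then force $r=1$ by comparing covolumes. Where you differ is in how the inclusion is obtained. The paper picks two explicit transversal directions that happen to span $\Lambda_1$: the horizontal one, where the scaling of the primitive vector by $r$ follows from the equality of the vertical edge lengths alone (the proportion hypothesis is vacuous in that direction), and one diagonal direction, where the proportion hypothesis plus a picture of the complementary parallelogram swept out by the slit gives the scaling. You instead prove the uniform formula $\Area(L)=A_i-h|\alpha_x|$ for the cylinder in an arbitrary primitive direction $\alpha$, which converts the hypothesis directly into $\alpha^{(2)}=\pm r^{-1}\alpha^{(1)}$ for every non-vertical usable direction; the price is the extra argument that usable directions generate $\Lambda_1$. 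Your version is more systematic and does not depend on reading holonomies off a figure, but the basis-finding step is where the remaining care lies, exactly as you flag: if $\Lambda_1$ contains a primitive vertical vector $v_0$, then every non-vertical vector has $|v_x|\geq |w_x|$ for a fixed generator $w_x$ of the $x$-projection, so you cannot make $|v_x|$ small; what saves you is that $A_1=|v_0|\,|w_x|$ and $h<|v_0|$ (the slit is a proper embedded subsegment of its vertical leaf), hence $h|w_x|<A_1$, so the directions $w$ and $w+v_0$ both carry cylinders and together generate $\Lambda_1$. That is what your parenthetical is gesturing at, and it is worth writing out in full.
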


\begin{figure}[htb]
\begin{minipage}[t]{0.45\linewidth}
\centering
\begin{tikzpicture}[scale=0.2]
\draw (0,0)--(0,2)--(0,8)--(12,8)--(12,2)--(12,0)--cycle;
\foreach \x in {(0,0),(0,2),(0,8),(12,8),(12,2),(12,0)} \draw [fill=black] \x circle (5pt);
\draw(0,5) node[left] {\tiny $a$};
\draw(12,5) node[right] {\tiny $a$};
\draw(6,8) node[above] {\tiny $b$};
\draw(6,0) node[below] {\tiny $b$};
\draw(-3.5,4) node {\small ($P_1$)};
\end{tikzpicture}
\end{minipage}
\begin{minipage}[t]{0.45\linewidth}
\centering
\begin{tikzpicture}[scale=0.2]
\draw (0,0)--(0,2)--(0,8)--(10,8)--(10,2)--(10,0)--cycle;
\foreach \x in {(0,0),(0,2),(6,8),(10,2),(10,0)} \draw [fill=black] \x circle (5pt);
\foreach \x in {(0,8),(10,8),(4,0)} \draw [fill=black] \x circle (2.5pt);
\draw(0,5) node[left] {\tiny $a$};
\draw(10,5) node[right] {\tiny $a$};
\draw(3,8) node[above] {\tiny $b_1$};
\draw(7,0) node[below] {\tiny $b_1$};
\draw(8,8) node[above] {\tiny $b_2$};
\draw(2,0) node[below] {\tiny $b_2$};
\draw(-3.5,4) node {\small ($P_2$)};
\end{tikzpicture}
\end{minipage}
\caption{Statement of Lemma \ref{L:P12}: two slit tori. The identifications within each are given by the letters. The bottom (unlabeled) vertical saddle connections are  not identified with anything; they are the slits. The slits are vertical, and of the same length on both $P_1$ and $P_2$. The saddle connection $a$ has the same length on both $P_1$ and $P_2$. On $P_2$, it may be that one of $b_1$ and $b_2$ has zero length. On $P_2$, the individual segments $b_1$ and $b_2$ are not saddle connections, although their union is. In $P_1$ the upper corners of the rectangle correspond to the endpoint of a slit. In $P_2$ the upper corners correspond to a non-singular point. This is indicated by making the dots slightly smaller.} 
\label{F:P12}
\end{figure}
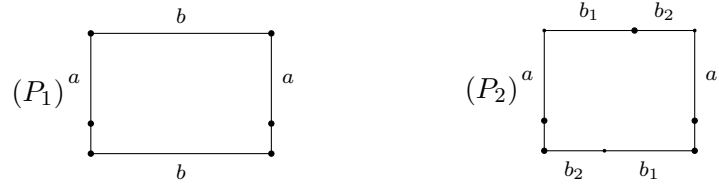

\begin{proof}
Let $T_i=\bC/\Lambda_i$ be the torus obtained by filling in the slit on $P_i$, so that $T_i$ is equal to $P_i$ with a vertical slit. Let $r=\frac{\Area(P_2)}{\Area(P_1)}$. 
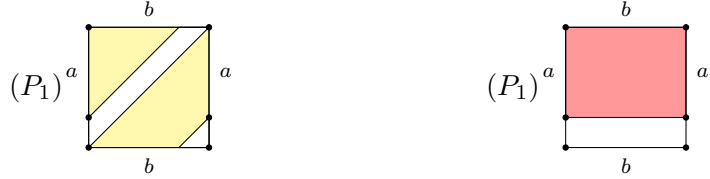
\begin{figure}
\begin{minipage}[t]{0.49\linewidth}
\centering
\begin{tikzpicture}[scale=0.2]
\draw [fill=yellow!40] (0,0) rectangle (8,8);
\draw [fill=white] (0,0) -- (8,8) -- (6,8) -- (0,2) --cycle;
\draw [fill=white] (8,0) -- (8,2) -- (6,0) --cycle;
\draw (0,0)--(0,2)--(0,8)--(8,8)--(8,2)--(8,0)--cycle;
\foreach \x in {(0,0),(0,2),(0,8),(8,8),(8,2),(8,0)} \draw [fill=black] \x circle (5pt);
\draw(0,5) node[left] {\tiny $a$};
\draw(8,5) node[right] {\tiny $a$};
\draw(4,8) node[above] {\tiny $b$};
\draw(4,0) node[below] {\tiny $b$};
\draw(-3.5,4) node {\small ($P_1$)};
\end{tikzpicture}
\end{minipage}
\begin{minipage}[t]{0.49\linewidth}
\centering
\begin{tikzpicture}[scale=0.2]
\draw [fill=red!40] (0,2) rectangle (8,8);
\draw (0,0)--(0,2)--(0,8)--(8,8)--(8,2)--(8,0)--cycle;
\foreach \x in {(0,0),(0,2),(0,8),(8,8),(8,2),(8,0)} \draw [fill=black] \x circle (5pt);
\draw(0,5) node[left] {\tiny $a$};
\draw(8,5) node[right] {\tiny $a$};
\draw(4,8) node[above] {\tiny $b$};
\draw(4,0) node[below] {\tiny $b$};
\draw(-3.5,4) node {\small ($P_1$)};
\end{tikzpicture}
\end{minipage}
\caption{Proof of Lemma \ref{L:P12}. On the left, a cylinder $L_1$ on $P_1$ in the direction of the line from the bottom left to the top right corner of the rectangle. On the right, a horizontal cylinder $H_1$ on $P_1$.} 
\label{F:LH}
\end{figure}
Let $L_1$ and $H_1$ be the cylinders shown in Figure \ref{F:LH}. Let $\alpha, \beta\in \bC$ be the holonomies of the core curves of $H_1$ and $L_1$, respectively. (Any   orientation for these core curves can be fixed.) Note that $\Lambda_1=\alpha\bZ\oplus\beta \bZ$. 

Since the lengths of the vertical edges are the same for both $P_1$ and $P_2$, we see that $r\alpha$ is the holonomy of the horizontal cylinder on $P_2$. 

By assumption, there is a cylinder $L_2$ on $P_2$ in the same direction as $L_1$. In this direction $P_2$ is the cylinder $L_2$ union a parallelogram from the slit to itself, as shown in Figure \ref{F:L2}.
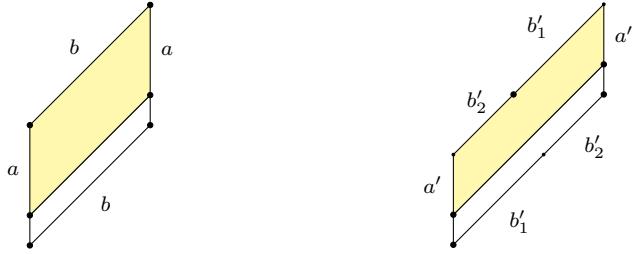
\begin{figure}
\begin{minipage}[c]{0.4\linewidth}
\centering
\begin{tikzpicture}[scale=0.2]
\draw (0,0) -- (8,8) -- (8, 10) -- (0,2) -- cycle; 
\draw [fill=yellow!40] (0,2) -- (8,10) -- (8, 16) -- (0,8) -- cycle; 
\foreach \x in {(0,0),(8,8),(8,10),(0,2), (8, 16), (0,8) } \draw [fill=black] \x circle (5pt);
\draw(0,5) node[left] {\tiny $a$};
\draw(8,13) node[right] {\tiny $a$};
\draw(4,4) node[below right] {\tiny $b$};
\draw(4,12) node[above left] {\tiny $b$};
\end{tikzpicture}
\end{minipage}
\begin{minipage}[c]{0.5\linewidth}
\centering
\begin{tikzpicture}[scale=0.2]
\draw (0,0) -- (10,10) -- (10, 12) -- (0,2) -- cycle; 
\draw [fill=yellow!40] (0,2) -- (10,12) -- (10, 16) -- (0,6) -- cycle; 
\foreach \x in {(0,2),(10,12), (4,10), (0,0), (10,10)} \draw [fill=black] \x circle (5pt);
\foreach \x in {(0,6),(10,16), (6,6)} \draw [fill=black] \x circle (2.5pt);
\draw(0,4) node[left] {\tiny $a'$};
\draw(10,14) node[right] {\tiny $a'$};
\draw(3,8) node[above left] {\tiny $b'_2$};
\draw(7,13) node[above left] {\tiny $b'_1$};
\draw(3, 3) node[below right] {\tiny $b'_1$};
\draw(8, 8) node[below right] {\tiny $b'_2$};
\end{tikzpicture}
\end{minipage}
\caption{Proof of Lemma \ref{L:P12}. On the left, $P_1$ in the direction of $L_1$, shaded solid. On the right, $P_2$ in the direction of $L_2$, shaded solid. $L_1$ and $L_2$ are in the same direction. The slits are the same length on both pictures, and it is assumed that the shaded solid region occupies an equal fraction of each picture.}
\label{F:L2}
\end{figure}
In Figure \ref{F:L2}, the height of the white strips is the same on $P_1$ and $P_2$, and by assumption the shaded solid regions (and hence their complements) occupy the same fraction of the area on both $P_1$ and $P_2$. Elementary geometry gives that the length of the top boundary of the parallelogram defining $P_2$ in Figure \ref{F:L2} is $r$ times the corresponding quantity for $P_1$.  Thus the core curve of $L_2$ is $r\beta$. 

Thus $\Lambda_2 \supseteq r(\alpha\bZ\oplus\beta \bZ) = r\Lambda_1$. In particular, we have ${\rm covol}(r\Lambda_1)= r^2{\rm covol}(\Lambda_1) \geq {\rm covol}(\Lambda_2)$. Since $r$ is the ratio of the covolumes of $\Lambda_2$ and $\Lambda_1$, and $r \leq 1$ by assumption, this gives $r=1$ and $\Lambda_2=\Lambda_1$. Therefore $P_1$ and $P_2$ are isometric as desired. 
\end{proof}


\begin{lem}\label{L:O1B}
In Case O1B, $\cN=\prym$. 
\end{lem}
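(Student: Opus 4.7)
The plan is to exhibit $M$ as a union of two slit tori $P_1, P_2$ satisfying the hypotheses of Lemma~\ref{L:P12}, deduce $P_1 \cong P_2$, and then apply Proposition~\ref{P:cover}.

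First I set up the vertical cylinder decomposition of $M$. Since $B$ is free and $\{A, C\}$ is an equivalence class, the Cylinder Deformation Theorem allows us to shear $B$ and $\{A, C\}$ within $\cN$ to normalize twist parameters so that the vertical direction is periodic with three cylinders: $V_1 = A \cup (\text{strip of } B \text{ directly beneath } A)$, $V_2 \subset B$ (occupying only the middle strip of $B$), and $V_3 = C \cup (\text{strip of } B \text{ directly above } C)$. Since $P(V_2,\{B\})=1$ while $P(V_1,\{B\}) = P(V_3,\{B\}) = h_B/(h_A+h_B)<1$, Proposition~\ref{P:P} shows $V_2$ is not $\cN$-parallel to $V_1$ or $V_3$. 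Three free cylinders would give $\cN = \odd$ by Proposition~\ref{P:3free}, contradicting properness of $\cN$, so $V_1 \sim V_3$ and $V_2$ is free. Proposition~\ref{P:P} with $\cE = \{A, C\}$ and $X = V_1, Y = V_3$ then yields $h_A/(h_A+h_B) = h_C/(h_C+h_B)$, hence $h_A = h_C$.

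Next I view $V_1, V_3$ as slit tori $P_1, P_2$: each has slit length $h_B$ (the vertical segment where it meets $V_2$ inside $M$) and non-slit side length $h_A = h_C$, while the widths are $s_1$ and $s_3$ (the lengths of the horizontal saddle connections labeled $1$ and $3$). Assume WLOG $\Area(P_1) \geq \Area(P_2)$. The main step is verifying the cylinder-correspondence hypothesis of Lemma~\ref{L:P12}. For any cylinder $L_1 \subset P_1$ with $\cN$-equivalence class $\cE_{L_1}$, applying Proposition~\ref{P:P} to $\cE_{L_1}$ and the parallel pair $V_1, V_3$ gives
$$\frac{\sum_{L \in \cE_{L_1},\, L \subset P_1} \Area(L)}{\Area(P_1)} = \frac{\sum_{L \in \cE_{L_1},\, L \subset P_2} \Area(L)}{\Area(P_2)}.$$
Taking $L_1 = A$ with $\cE_{L_1} = \{A, C\}$ immediately yields $\Area(A)/\Area(P_1) = \Area(C)/\Area(P_2)$, handling the horizontal slope. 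For other slopes, the summed equality above at least guarantees that any cylinder in $P_1$ is matched by cylinders in $P_2$ of the same slope; extracting a single representative matching the desired proportion is the technically hard part.

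With the hypothesis verified, Lemma~\ref{L:P12} yields $P_1 \cong P_2$, forcing $s_1 = s_3$. The isometry $P_1 \to P_2$ then provides the affine map with derivative $-1$ from $A$ to $C$ sending saddle connection $1$ to saddle connection $3$, so Proposition~\ref{P:cover} concludes $\cN = \prym$.

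The principal obstacle is the individual-matching piece of the cylinder correspondence: Proposition~\ref{P:P} delivers only the collective proportion of each $\cN$-equivalence class within each slit torus, whereas Lemma~\ref{L:P12} demands a cylinder-by-cylinder match. Resolving this likely requires either additional cylinder deformations inside $\cN$ to split equivalence classes, or a direct combinatorial analysis showing that in our setting each relevant $\cN$-class intersects each of $P_1, P_2$ in a single cylinder (analogous to the horizontal $A \leftrightarrow C$ match, and to the ``diagonal'' cylinder pair used in the proof of Lemma~\ref{L:P12}).
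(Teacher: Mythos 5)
Your proposal follows the same route as the paper's proof (cut along the vertical saddle connections homologous to the core of the middle vertical cylinder, verify the hypotheses of Lemma~\ref{L:P12} via Proposition~\ref{P:P}, finish with Proposition~\ref{P:cover}), and your intermediate computations (ruling out $V_2\sim V_1,V_3$ via $P(\cdot,\{B\})$, deducing $V_1\sim V_3$ from Proposition~\ref{P:onefree}, and extracting $h_A=h_C$) are correct. But there are two genuine gaps. The first is your opening normalization: shearing $B$ and the class $\{A,C\}$ gives only two twist parameters, while closing up all three vertical cylinders $V_1,V_2,V_3$ imposes three independent conditions; the conditions for $V_1$ and $V_3$ coincide only when $\ell(1)=\ell(3)$ and the twists of $A$ and $C$ already match, which is essentially the conclusion you are after. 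The paper shears to produce only $V_2$ and $V_1$, and then \emph{derives} the existence of a vertical cylinder filling $C$ by applying Proposition~\ref{P:P} to the equivalence class of $V_1$ with $X=A$, $Y=C$ (the proportion of $A$ in that class is $1$, hence so is that of $C$). You need this extra step; once it is in place, your route to ``$V_2$ free and $V_1\sim V_3$'' works.

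The second gap is the one you flag yourself, and you should close it rather than defer it, because it is not actually hard. Two observations suffice. (i) Every cylinder $L$ that is $\cN$-parallel to $L_1$ lies entirely in $P_1$ or in $P_2$: since $V_2$ is free, Proposition~\ref{P:P} with $\cE=\{V_2\}$ gives $P(L,\{V_2\})=P(L_1,\{V_2\})=0$, so $L$ is disjoint from $V_2$; hence its core curve has zero intersection number with the three vertical saddle connections homologous to the core of $V_2$, and since all geometric crossings of a closed geodesic with a straight segment have the same sign, $L$ misses those saddle connections and lies in one complementary piece. Note that your ``summed equality'' already presupposes (i); without it a cylinder straddling $V_2$ would contribute only part of its area to each side and the identity you wrote would not follow from Proposition~\ref{P:P}. (ii) A slit torus carries at most one maximal cylinder in any fixed direction: the closed leaves of the underlying torus meeting the slit form a closed sub-annulus, whose complement is a single open annulus. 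Hence the equivalence class of $L_1$ meets $P_1$ in exactly $\{L_1\}$ and meets $P_2$ in exactly one cylinder $L_2$ (nonempty by your equality), so the summed identity collapses to $\Area(L_1)/\Area(P_1)=\Area(L_2)/\Area(P_2)$, which is what Lemma~\ref{L:P12} needs. With these two points supplied, your argument coincides with the paper's.
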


\begin{proof}
\begin{figure}[htb]
\centering
\begin{tikzpicture}[scale=0.60]
\draw (0,2)--(0,4)--(0,6)--(2,6)--(2,4)--(4,4)--(6,4)--(6,2)--(7,0)--(5,0)--(4,2)--(2,2)--cycle;
\draw [fill=orange!40] (2,2) rectangle (4,4);
\draw [color=white] (0,2)--(0,4);
\draw [style=densely dotted] (0,2)--(0,4);
\draw [color=white] (2,2)--(2,4);
\draw [style=densely dotted] (2,2)--(2,4);
\draw [color=white] (4,2)--(4,4);
\draw [style=densely dotted] (4,2)--(4,4);
\draw [color=white] (6,2)--(6,4);
\draw [style=densely dotted] (6,2)--(6,4);
\foreach \x in {(0,2),(0,4),(0,6),(2,6),(2,4),(4,4),(6,4),(6,2),(7,0),(5,0),(4,2),(2,2)} \draw \x circle (1pt);
\draw(1,6) node[above] {\tiny 1};
\draw(3,4) node[above] {\tiny 2};
\draw(5,4) node[above] {\tiny 3};
\draw(1,2) node[below] {\tiny 1};
\draw(3,2) node[below] {\tiny 2};
\draw(6,0) node[below] {\tiny 3};
\draw(-1.5,3) node {\small (O1)};
\draw(3,3) node {\tiny V};
\draw(1,4) node {\tiny $P_1$};
\draw(5.1,2) node {\tiny $P_2$};
\end{tikzpicture}
\caption{Proof of Lemma \ref{L:O1B}.}
\label{F:V}
\end{figure}
By Proposition \ref{P:cover}, it suffices to show that cylinders $A$ and $C$ are isometric. Shearing $B$ we may assume that there is a vertical cylinder $V$ which is contained in the closure of $B$ as shown in Figure \ref{F:V}. Shearing the complement of $B$ we may assume that there is a vertical cylinder through (1). Let $M$ be the resulting surface, shown in Figure \ref{F:V}. 

There are three vertical saddle connections homologous to the core curve of $V$; these are drawn with dotted lines in Figure \ref{F:V}. Cutting along these three saddle connections gives one vertical cylinder $V$, and two slit tori, which we denote by $P_1$ and $P_2$. Without loss of generality we can assume that $\Area(P_1) \geq \Area(P_2)$. 

We now claim that $P_1$ and $P_2$ satisfy the assumptions of Lemma \ref{L:P12}. Indeed, suppose that there is a cylinder $L_1 \subset P_1$. We also consider $L_1$ as a cylinder on $M$. By Proposition \ref{P:P} (with $\cE$ the equivalence class of cylinders $\cN$-parallel to $L_1$, $X=A, Y=C$), the proportion of $A$ in the equivalence class of cylinders $\cN$-parallel to $L_1$, which is non-zero, must be equal to the proportion of $C$ in the equivalence class of cylinders $\cN$-parallel to $L_1$.  Since the proportion of $C$ in the cylinder $L_1$ is zero, there must be some cylinder $L_2$ which is $\cN$-parallel to $L_1$ which crosses $C$.

By Proposition \ref{P:P} (with $\cE=\{B\}$, $X=V$), the proportion of $V$ in $\cE$ is one, and since there are no cylinders parallel to $V$ which are entirely contained in $B$, we see that $V$ is free. Proposition \ref{P:P} (with $\cE=\{V\}, X=L_1, Y=L_2$) also implies that $L_2$ does not intersect $V$ because the proportion of $L_1$ in $V$ is zero and $L_2$ is $\cN$-parallel to $L_1$. Hence, $L_2$ does not intersect any of the saddle connections homologous to the core curve of $V$. Thus, $L_2$ is contained entirely in $P_2$.

Consider the vertical cylinder $V_A$ which contains $A$. By Proposition~\ref{P:P} (with $\cE$ equal to the equivalence class of $V_A$, $X=A, Y=C$),  the proportion of $A$ in the equivalence class of cylinders parallel to $V_A$, which is one, must be equal to the proportion of $C$ in that equivalence class.  Hence, there must be vertical cylinder $V_C$ which contains $C$. The closure of $V_C$ is equal to the closure of $P_2$. By Proposition \ref{P:P} (with $\cE=\{B\}, X=V_A, Y=V_C$), the equation that the proportion of $V_A$ in $B$ is equal to the proportion of $V_C$ in $B$ can be simplified with elementary algebra to prove that the height of the $C$ is equal to the height of $A$.

By Proposition \ref{P:P} (with $\cE=\{L_1,L_2\}, X=V_A, Y=V_C$), the proportion of $V_A$ in $L1\cup L_2$ is equal to the proportion of $V_C$ in $L_1\cup L_2$, or equivalently
$$\frac{\Area(L_1)}{\Area(P_1)}= \frac{\Area(L_2)}{\Area(P_2)}.$$

Thus the claim is proved, and Lemma \ref{L:P12} gives that $P_1$ and $P_2$ are isometric. Proposition~\ref{P:cover} now gives that $\cN=\prym$. 
\end{proof}

\begin{lem}\label{L:O2B}
In Case O2B, $\cN=\prym$. 
\end{lem}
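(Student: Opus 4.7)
The plan is to reduce Case O2B to Case O1B by passing to a $90$-degree rotation of a convenient horizontally periodic representative of $\cN$, then invoking Lemma \ref{L:O1B}.

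Fix a horizontally periodic $M\in\cN$ with the O2 cylinder diagram, $B$ free, and $\{A,C\}$ the $\cN$-parallel class of horizontal cylinders. By applying cylinder shears to $\{A,C\}$ and to $\{B\}$ (Theorem \ref{T:CDT}), I may assume $M$ is also vertically periodic. The vertical decomposition then consists of three cylinders: $V_A\subset A$ (closing up via saddle connection $(1)$, with circumference $h_A$ and height $\ell_1$), $V_C\subset C$ (closing up via saddle connection $(3)$, with circumference $h_C$ and height $\ell_3$), and $V_{mid}$ (wrapping once through the middle columns of $A$, $B$, $C$ and closing via saddle connection $(2)$, with circumference $h_A+h_B+h_C$ and height $\ell_2$).

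Next, I would apply Proposition \ref{P:P} with $X=A$, $Y=C$ to determine the vertical parallel classes. The class $\{V_A\}$ is impossible, since $P(A,\{V_A\})=\ell_1/(\ell_1+\ell_2)\ne 0=P(C,\{V_A\})$; symmetrically, $\{V_C\}$ is impossible. The class $\{V_A,V_{mid}\}$ (without $V_C$) is ruled out because $P(A,\cE)=1\ne\ell_2/(\ell_2+\ell_3)=P(C,\cE)$, and likewise $\{V_C,V_{mid}\}$ by symmetry. Finally, the class $\{V_A,V_C,V_{mid}\}$ is ruled out by applying Proposition \ref{P:P} with $X=V_A$, $Y=V_{mid}$, $\cE=\{A,C\}$: $P(V_A,\cE)=1\ne(h_A+h_C)/(h_A+h_B+h_C)=P(V_{mid},\cE)$. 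The only remaining possibility is that $\{V_A,V_C\}$ is a single equivalence class and $V_{mid}$ is free.

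Now let $M'=r_{\pi/2}(M)\in\cN$ (using $\G$-invariance of $\cN$). Its three horizontal cylinders are the images of $V_A$, $V_{mid}$, $V_C$, with the middle cylinder free and the top/bottom $\cN$-parallel. I claim the cylinder diagram of $M'$ is O1 up to symmetry. The structural feature that forces this is that the saddle connection of length $h_B$ which identifies the left and right sides of the original cylinder $B$ (contained entirely in $V_{mid}$) becomes, after rotation, a self-loop on the middle cylinder of $M'$ lying at the same new-$x$-range on both its top and bottom horizontal boundary. Among the four $3$-cylinder diagrams of Figure \ref{F:O1-4}, only O1 has a self-loop on the middle cylinder with this alignment: O2 and O3 have no self-loop on the middle cylinder, while in O4 the two occurrences of the self-loop on the middle cylinder lie at distinct $x$-ranges. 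Hence $M'$ realizes O1 with middle cylinder free, placing $\cN$ in Case O1B, and Lemma \ref{L:O1B} gives $\cN=\prym$.

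The hardest step will be the cylinder-diagram identification in the third paragraph: one must carefully trace through how the boundary identifications in the O2 polygon (those arising from the closure of cylinders $A$, $B$, and $C$) transform under the $\pi/2$ rotation, and verify that the unique self-loop on the middle cylinder of $M'$ lies at matching $x$-coordinates on both top and bottom, so that the diagram is O1 (rather than O4).
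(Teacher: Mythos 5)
Your overall strategy---realize a three-cylinder decomposition in the vertical direction and recognize it as Case O1B---is exactly the paper's, but your first step contains a genuine gap that the paper's proof is specifically built to avoid. You assert that by shearing the equivalence class $\{A,C\}$ and the free cylinder $B$ you may assume the vertical decomposition consists of $V_A\subset\overline{A}$ (over saddle connection $(1)$), $V_C\subset\overline{C}$ (over $(3)$), and $V_{mid}$. But the cylinder shear $u_t^{\{A,C\}}$ applies the \emph{same} matrix to $A$ and to $C$, so together with the global shear you have only one effective parameter controlling the twists of both slit tori, while your claimed decomposition requires two independent alignment conditions (saddle connection $(1)$ directly above itself inside $A$, and $(3)$ directly above itself inside $C$). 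There is no a priori reason these can be met simultaneously; indeed, the assertion that they can is essentially the symmetry between $A$ and $C$ that the lemma is trying to establish, so this step begs the question. The paper arranges only the single condition on $A$ (producing $V_A$) plus a vertical saddle connection in the free cylinder $B$, and then \emph{derives} from Proposition \ref{P:P} and the $\cN$-parallelism of $A$ and $C$ that there is a vertical cylinder $V_C$ in the equivalence class of $V_A$ which meets $C$ but not $B$, hence is contained in $\overline{C}$. That deduction is the heart of the argument and is missing from your proposal. (Your later use of Proposition \ref{P:P} to pin down the vertical equivalence classes is correct given the decomposition; in the paper this is unnecessary because $V_C$ is produced already $\cN$-parallel to $V_A$.)

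A secondary issue: your criterion for distinguishing O1 from O4 after rotation---that the two occurrences of the self-loop on the middle cylinder ``lie at the same $x$-range'' on top and bottom---is not a combinatorial invariant of a cylinder diagram, since twisting the middle cylinder slides its bottom boundary relative to its top. The correct distinction is adjacency: in O1 each simple cylinder borders the middle cylinder along \emph{both} of its boundary saddle connections, whereas in O4 the simple cylinder $A$ is glued across saddle connection $(1)$ to $C$ rather than to $B$. The vertical decomposition here does satisfy the O1 pattern (both boundary saddle connections of $V_A$, and of $V_C$, abut $V_{mid}$), so your conclusion is right, but the justification you sketch would not survive the careful verification you yourself flag as necessary.
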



\begin{proof}
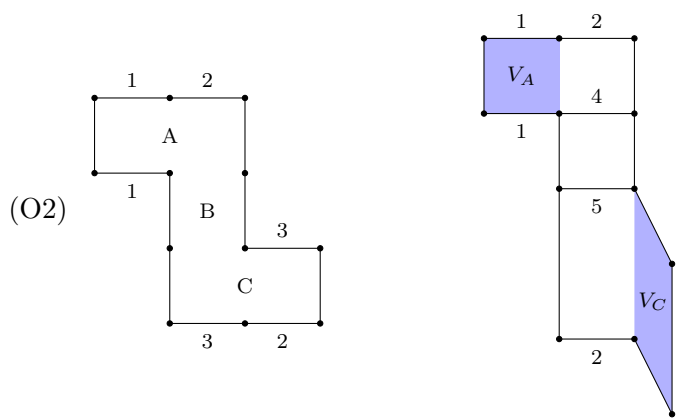
\begin{figure}[htb]
\centering
\begin{minipage}[c]{0.4\linewidth}
\begin{tikzpicture}[scale=0.5]
\draw (0,4)--(0,6)--(2,6)--(4,6)--(4,4)--(4,2)--(6,2)--(6,0)--(4,0)--(2,0)--(2,2)--(2,4)--cycle;
\foreach \x in {(0,4),(0,6),(2,6),(4,6),(4,4),(4,2),(6,2),(6,0),(4,0),(2,0),(2,2),(2,4)} \filldraw[fill=black] \x circle (2pt);
\draw(1,6) node[above] {\tiny 1};
\draw(3,6) node[above] {\tiny 2};
\draw(5,2) node[above] {\tiny 3};
\draw(1,4) node[below] {\tiny 1};
\draw(3,0) node[below] {\tiny 3};
\draw(5,0) node[below] {\tiny 2};
\draw(2,5) node {\tiny A};
\draw(3,3) node {\tiny B};
\draw(4,1) node {\tiny C};
\draw(-1.5,3) node {\small (O2)};
\end{tikzpicture}
\end{minipage}
\begin{minipage}[c]{0.4\linewidth}
\centering
\begin{tikzpicture}[scale=0.5]
\fill[blue!30] (0,8) -- (0,6) -- (2,6) -- (2,8) -- cycle;
\fill[blue!30] (4,4) -- (4,0) -- (5,-2) -- (5,2) -- cycle;


\draw (2,0) -- (4,0) -- (5,-2) -- (5,2) -- (4,4) -- (4,8) -- (0,8) -- (0,6) -- (2,6) -- cycle;
\draw (2,6) -- (4,6) (2,4) -- (4,4);
\foreach \x in {(0,8), (0,6), (2,8), (2,6), (2,4), (2,0), (4,8), (4,6), (4,4), (4,0), (5,2), (5,-2)} \filldraw[fill=black] \x circle (2pt);
\draw (1,8) node[above] {\tiny $1$};
\draw (1,6) node[below] {\tiny $1$};
\draw (3,8) node[above] {\tiny $2$};
\draw (3,0) node[below] {\tiny $2$};
\draw (3,6) node[above] {\tiny $4$};
\draw (3,4) node[below] {\tiny $5$};
\draw (1,7) node {\tiny $V_A$};
\draw (4.5,1) node {\tiny $V_C$};
\end{tikzpicture}
\end{minipage}
\caption{On the left, we have reproduced the cylinder diagram for Case O2. On the right, we have illustrated the switching of cylinder decompositions into Case O1B in the vertical direction.}
\label{F:O2B}
\end{figure}

We start by shearing $A$ (and $C$) so that there is a vertical cylinder $V_A$ that contains the saddle connection $(1)$ and is contained in the closure of $A$.  Since $B$ is free, we can shear it so that it contains a vertical saddle connection  (see Figure~\ref{F:O2B}). Since $A$ and $C$ are $\cN$-parallel, by Proposition~\ref{P:P} (with $\cE$ the equivalence class of $V_A$, and $X=A, Y=C$), the proportion of $A$ in the equivalence class of cylinders $\cN$-parallel to $V_A$, which is non-zero, is equal to the proportion of $C$ in $\cE$, which implies that there exists a vertical cylinder $V_C$ in the equivalence class of $V_A$ that intersects $C$. Also by Proposition~\ref{P:P} (with $\cE=\{B\}, X=V_A, Y=V_C$), the proportion of $V_A$ in $\{B\}$, which is zero, must be equal to the proportion of $V_C$ in $\{B\}$.  Hence, $V_C$ cannot intersect $B$, since $V_A$ does not. Therefore, $V_C$ does not cross any of the saddle connections $(2),(4),(5)$. We can then conclude that 
$V_C$ is entirely contained in the closure of $C$. But $\overline{C}$ is a slit torus, hence it is the union of $V_C$ and a rectangle bounded by $(2),(5)$ and the boundaries of $V_C$. Thus the surface admits a cylinder decomposition in the vertical direction in model $O1$. Moreover, since $V_A$ and $V_C$ are $\cN$-parallel, we are in Case $O1B$. We can now use Lemma~\ref{L:O1B} to conclude that $\cN=\prym$.
\end{proof}

\section{Case O3B}\label{S:intro}

In this section we show   

\begin{lem}\label{L:O3B}
Case O3B implies that $\cN=\prym$.
\end{lem}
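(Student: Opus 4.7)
My plan is to adapt the strategy of Lemma \ref{L:O1B}, ultimately invoking Proposition \ref{P:cover} to conclude $\cN = \prym$. Recall that in Case O3B the cylinder $B$ is free and $A, C$ are $\cN$-parallel; the task is to show that $A$ and $C$ are affinely equivalent via a map of derivative $-1$ sending saddle connection $(1)$ to $(3)$. I will produce this symmetry by decomposing the surface into a vertical cylinder together with two slit tori and applying Lemma \ref{L:P12}.

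First, using the freedom to shear $B$ independently (since $B$ is free) and to shear the pair $\{A, C\}$ together (since they form an $\cN$-class), I place the surface $M \in \cN$ into a configuration featuring a distinguished vertical cylinder $V$. An essential difference with Lemma \ref{L:O1B} is that in Case O3 no vertical cylinder can be contained entirely in $\overline{B}$: saddle connection $(2)$ in O3 joins $A$ to $C$ directly rather than being self-glued on $B$ as in O1, so there is no ``internal'' vertical passage through $B$ alone. Instead, I take $V$ to be a vertical cylinder crossing all three horizontal cylinders (analogous to $V_2$ in the canonical O3 picture), playing the role of the central cylinder. I then identify vertical saddle connections homologous to the core curve of $V$; cutting $M$ along them yields $V$ together with two slit tori $P_1$ and $P_2$, where $P_1$ contains $A$ and the portion of $B$ glued to $A$ by saddle connection $(1)$, while $P_2$ contains $C$ and the portion of $B$ glued to $C$ by saddle connection $(3)$. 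The slits on $P_1$ and $P_2$ have equal length, equal to the circumference of $V$.

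Next, I verify the hypotheses of Lemma \ref{L:P12}. Given any cylinder $L_1 \subset P_1$, Proposition \ref{P:P} applied with $\cE$ equal to the $\cN$-class of $L_1$ and $X = A$, $Y = C$ forces the existence of a cylinder $L_2 \in \cE$ meeting $C$. A further application of Proposition \ref{P:P} --- modeled on the final step in the proof of Lemma \ref{L:O1B} involving the vertical cylinders $V_A, V_C$ containing $A$ and $C$ --- confines $L_2$ to $P_2$ and yields the matching area proportions $\Area(L_1)/\Area(P_1) = \Area(L_2)/\Area(P_2)$. Lemma \ref{L:P12} then produces an isometry $P_1 \cong P_2$, which combines with $V$ to yield a global involution of $M$ with derivative $-1$ exchanging $A$ and $C$ and sending saddle connection $(1)$ to $(3)$; Proposition \ref{P:cover} then gives $\cN = \prym$.

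The principal obstacle I anticipate lies in the first step: choosing the shears of $B$ and $\{A, C\}$ so that the desired vertical saddle connections exist (homologous to the core of $V$) and so that the resulting pieces $P_1, P_2$ genuinely have the structure of slit tori rather than more complicated surfaces. The vertical saddle connection combinatorics in O3 are more involved than in O1, precisely because of saddle connection $(2)$ crossing from $A$ to $C$, so careful tracking of these combinatorics will be needed to ensure the decomposition fits the template of Lemma \ref{L:P12}.
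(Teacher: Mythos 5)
There is a genuine gap, and it is exactly the one you flag at the end of your proposal without resolving: the slit-torus decomposition of Lemma \ref{L:P12} does not exist in Case O3B, so the strategy of Lemma \ref{L:O1B} cannot be transplanted. You correctly observe that no vertical cylinder can be contained in $\overline{B}$ in diagram O3 (no horizontal saddle connection appears on both the top and the bottom of $B$), and you propose instead a vertical cylinder $V$ crossing all three horizontal cylinders. But then the core curve of $V$ has intersection number $1$ with the core curve of each of $A$, $B$, $C$, and intersection numbers are homological invariants: every vertical saddle connection homologous to the core curve of $V$ must therefore also cross all three horizontal cylinders. This kills the claimed decomposition. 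If the only cuts are the two saddle connections bounding $V$, the complement is connected (the strips $A\setminus V$ and $C\setminus V$ are each glued to $B\setminus V$ along $(1)$, $(3)$ and the $A$--$B$, $B$--$C$ interfaces), so you get one genus-two piece rather than two slit tori. If there is a third homologous cut, it necessarily passes through both $A\setminus V$ and $C\setminus V$, so neither complementary piece can contain all of $A$ (respectively all of $C$) together with only a portion of $B$, contrary to what your $P_1$, $P_2$ are required to look like. (Incidentally, the cylinder you call ``analogous to $V_2$'' is shown in the paper to satisfy $n=0$, i.e.\ to avoid $A$ entirely, so it is not of the type you need either.)

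The paper's proof proceeds along entirely different lines, precisely because the two-slit-tori template fails here. It shears $A$ and $B$ so that $(1)$ lies over itself, producing a vertical cylinder $V_1\subset\overline{A\cup B}$; Proposition \ref{P:P} forces an $\cN$-parallel vertical cylinder $V_2$ meeting $C$, and counting how many times $V_2$ crosses each horizontal cylinder turns the identity $P(V_1,\{A,C\})=P(V_2,\{A,C\})$ into the inequality $h(C)\le h(A)$. The mirror configuration (shearing $B$ and $C$ so that $(3)$ lies over itself) gives the reverse inequality, hence $h(A)=h(C)$ and $V_2\subset\overline{B\cup C}$. Running the same two-sided argument in the vertical direction (using Smillie--Weiss to produce a third vertical cylinder and landing back in Case O3B with $V_1,V_2$ in the roles of $A,C$) yields $\ell(1)=\ell(3)$, and a final observation shows $A$ and $C$ are untwisted, hence isometric, so Proposition \ref{P:cover} applies. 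To repair your write-up you would need to abandon Lemma \ref{L:P12} for this case and supply an argument of this comparative, two-configuration type.
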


\begin{proof}
We will again show that $A$ and $C$ are isometric. 

We begin by shearing cylinders $A$ and $B$ so that saddle connection $(1)$ lies directly above itself. Then there is a vertical cylinder $V_1$ passing through $A$ and $B$, but not $C$ (see Figure \ref{F:O3BDavid}).  Call this Model I. By Proposition \ref{P:P} (with $\cE$ the equivalence class of $V_1$ and $X=A, Y=C$), the proportion of $A$ in the equivalence class of cylinders $\cN$-parallel to $V_1$, which is nonzero, is equal to the proportion of $C$ in $\cE$.  Since $C$ does not intersect the cylinder $V_1$, there is a vertical cylinder $V_2$ that is $\cN$-parallel to $V_1$ that intersects $C$.

Every trajectory ascending vertically from $C$ must pass through $B$, and so $V_2$ passes through each of $B$ and $C$ an equal number of times, say $m > 0$ times. Say that $V_2$ passes through $A$ $n \geq 0$ times.  Note that $m \geq n$. 

Let $\cE = \{A, C\}$.  Let $h(\cdot)$ denote the height of a cylinder, and let $\ell(\cdot)$ denote the length of a saddle connection. By Proposition \ref{P:P}, $P(V_1, \{A,C\}) = P(V_2, \{A,C\})$.  This yields
$$ \frac{h(A)}{h(A)+h(B)}  =  \frac{n h(A) + m h(C)}{n h(A) + m (h(B) + h(C))},$$
which holds if and only if 
$$\frac{h(C)}{h(A)}   =  \frac{m-n}{m}.$$

\noindent Hence, $h(C) \leq h(A)$.

\begin{figure}
\centering
\begin{tikzpicture}[scale=0.5]
\draw (-7,0) -- (-9,0) -- (-9,4) -- (-4,4) -- (-4,2) -- (-1,2) -- (-1,0) -- (1,-2) -- (-5,-2) -- cycle;
\draw (-7,0) -- (-7,4);
\foreach \x in {(-7,0), (-9,0), (-9,2), (-9,4), (-7,4), (-4,4), (-4,2), (-1,2), (-1,0), (1,-2), (-2,-2), (-5,-2)} \filldraw[fill=black] \x circle (2pt);
\draw (-8,4) node[above] {\tiny $1$}; 
\draw (-8,0) node[below] {\tiny $1$};
\draw (-5.5,4) node[above] {\tiny $2$};
\draw (-3.5,-2) node[below] {\tiny $2$};
\draw (-2.5,2) node[above] {\tiny $3$};
\draw (-.5,-2) node[below] {\tiny $3$};
\draw (-8,2) node {\tiny $V_1$};

\draw (5,0) -- (3,0) -- (3,2) -- (1,4) -- (6,4) -- (8,2) -- (11,2) -- (11,-2) -- (5,-2) -- cycle;
\draw (8,2) -- (8,-2);
\foreach \x in {(5,0), (1,4), (6,4), (8,2), (11,-2), (5,-2), (11,0), (3,0), (3,2), (3,4), (8,2), (11,2), (8,-2)} \filldraw[fill=black] \x circle (2pt);
\draw (2,4) node[above] {\tiny $1$}; 
\draw (4,0) node[below] {\tiny $1$};
\draw (4.5,4) node[above] {\tiny $2$};
\draw (6.5,-2) node[below] {\tiny $2$};
\draw (9.5,2) node[above] {\tiny $3$};
\draw (9.5,-2) node[below] {\tiny $3$};
\end{tikzpicture}
\caption{Proof of Lemma \ref{L:O3B}: Model I is pictured on the left and Model II on the right.}
\label{F:O3BDavid}
\end{figure}
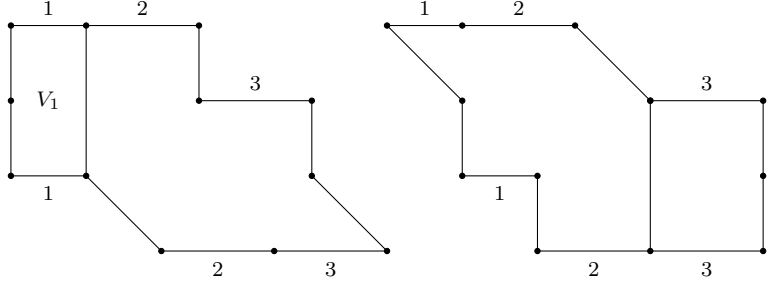

On the other hand, if we shear $B$ and $C$ so that $(3)$ lies directly above itself, and if we repeat the same argument above, then we get $h(A) \leq h(C)$. Call this Model II (depicted in Figure \ref{F:O3BDavid}). Hence, $h(A) = h(C)$. Moreover, $n = 0$, which means that $V_2$ is contained in the closure of $B \cup C$.

Next consider the arrangement of Model I.  Recall that we proved there is another cylinder $V_2$ that is $\cN$-parallel to $V_1$.  If we use Smillie-Weiss \cite{SW2} to get a third cylinder in the vertical direction, then we are done unless the resulting surface satisfies Case O3B.  In this case $V_1$ and $V_2$ play the roles of $A$ and $C$.  Hence, by the argument above, we have $h(V_1) = h(V_2)$. Recall that $\ell(1) = h(V_1)$.  It is easy to see that $\ell(3)=mh(V_2)$. This implies that $\ell(1) \leq \ell(3)$. 

On the other hand, if we applied this exact argument to Model II, then we get $\ell(3) \leq \ell(1)$, which implies $\ell(1) = \ell(3)$.

To show that $A$ and $C$ are isometric, it only remains to  show that they are sheared by equal amounts. Recalling again that $V_2 \subset B \cup C$ and $h(V_2) = \ell(3)$, we see that every vertical trajectory descending from $(3)$ on the top of $B$ hits $(3)$ on the bottom of $C$.  Hence, neither $A$ nor $C$ are twisted, which implies that they are isometric.  The lemma follows from Proposition \ref{P:cover}.
\end{proof}


\bibliography{mybib}{}

\providecommand{\bysame}{\leavevmode\hbox to3em{\hrulefill}\thinspace}
\providecommand{\MR}{\relax\ifhmode\unskip\space\fi MR }
\providecommand{\MRhref}[2]{%
  \href{http://www.ams.org/mathscinet-getitem?mr=#1}{#2}
}
\providecommand{\href}[2]{#2}
\begin{thebibliography}{McM06b}

\bibitem[AEM]{AEM}
Artur Avila, Alex Eskin, and Martin M\"oller, \emph{Symplectic and {I}sometric
  {$SL(2, \bR)$}-invariant subbundles of the {H}odge bundle}, preprint, arXiv
  1209.2854 (2012).

\bibitem[BM12]{BaM}
Matt Bainbridge and Martin M{\"o}ller, \emph{The {D}eligne-{M}umford
  compactification of the real multiplication locus and {T}eichm\"uller curves
  in genus 3}, Acta Math. \textbf{208} (2012), no.~1, 1--92.

\bibitem[Cal04]{Ca}
Kariane Calta, \emph{Veech surfaces and complete periodicity in genus two}, J.
  Amer. Math. Soc. \textbf{17} (2004), no.~4, 871--908.

\bibitem[EM]{EM}
Alex Eskin and Maryam Mirzakhani, \emph{Invariant and stationary measures for
  the {$SL(2,\bR)$} action on moduli space}, preprint, arXiv 1302.3320 (2013).

\bibitem[EMM]{EMM}
Alex Eskin, Maryam Mirzakhani, and Amir Mohammadi, \emph{Isolation theorems for
  {$SL(2, \bR)$}-invariant submanifolds in moduli space}, preprint,
  arXiv:1305.3015 (2013).

\bibitem[Fila]{Fi2}
Simion Filip, \emph{Semisimplicity and rigidity of the {K}ontsevich-{Z}orich
  cocycle}, preprint, arXiv:1307.7314 (2013).

\bibitem[Filb]{Fi1}
\bysame, \emph{Splitting mixed {H}odge structures over affine invariant
  manifolds}, preprint, arXiv:1311.2350 (2013).

\bibitem[HLM09]{HLM-AY}
Pascal Hubert, Erwan Lanneau, and Martin M{\"o}ller, \emph{The
  {A}rnoux-{Y}occoz {T}eichm\"uller disc}, Geom. Funct. Anal. \textbf{18}
  (2009), no.~6, 1988--2016.

\bibitem[HLM12]{HLM-Q}
\bysame, \emph{Completely periodic directions and orbit closures of many
  pseudo-{A}nosov {T}eichmueller discs in {Q}(1,1,1,1)}, Math. Ann.
  \textbf{353} (2012), no.~1, 1--35.

\bibitem[Lan08]{Lconn}
Erwan Lanneau, \emph{Connected components of the strata of the moduli spaces of
  quadratic differentials}, Ann. Sci. \'Ec. Norm. Sup\'er. (4) \textbf{41}
  (2008), no.~1, 1--56.

\bibitem[LN]{LNprym}
Erwan Lanneau and Duc-Manh Nguyen, \emph{Teichm\"uller curves generated by
  {W}eierstrass {P}rym eigenforms in genus three and genus four}, preprint,
  arXiv 1111.2299 (2011), to appear in Journal of Topology.

\bibitem[Mas82]{Ma2}
Howard Masur, \emph{Interval exchange transformations and measured foliations},
  Ann. of Math. (2) \textbf{115} (1982), no.~1, 169--200.

\bibitem[McM03]{Mc}
Curtis~T. McMullen, \emph{Billiards and {T}eichm\"uller curves on {H}ilbert
  modular surfaces}, J. Amer. Math. Soc. \textbf{16} (2003), no.~4, 857--885
  (electronic).

\bibitem[McM05]{McM:spin}
\bysame, \emph{Teichm\"uller curves in genus two: discriminant and spin}, Math.
  Ann. \textbf{333} (2005), no.~1, 87--130.

\bibitem[McM06a]{Mc2}
\bysame, \emph{Prym varieties and {T}eichm\"uller curves}, Duke Math. J.
  \textbf{133} (2006), no.~3, 569--590.

\bibitem[McM06b]{Mc4}
\bysame, \emph{Teichm\"uller curves in genus two: torsion divisors and ratios
  of sines}, Invent. Math. \textbf{165} (2006), no.~3, 651--672.

\bibitem[McM07]{Mc5}
\bysame, \emph{Dynamics of {${\rm SL}_2(\Bbb R)$} over moduli space in genus
  two}, Ann. of Math. (2) \textbf{165} (2007), no.~2, 397--456.

\bibitem[MMY]{MMY}
Carlos Matheus, Martin M\"oller, and Jean-Christophe Yoccoz, \emph{A criterion
  for the simplicity of the {L}yapunov spectrum of square-tiled surfaces},
  preprint, arXiv 1305.2033 (2013).

\bibitem[M{\"o}l]{Mprym}
Martin M{\"o}ller, \emph{Prym covers, theta functions and {K}obayashi curves in
  {H}ilbert modular surfaces}, preprint, arXiv 1111.2624 (2011), to appear in
  Amer. Journal. of Math.

\bibitem[M{\"o}l06a]{M2}
\bysame, \emph{Periodic points on {V}eech surfaces and the {M}ordell-{W}eil
  group over a {T}eichm\"uller curve}, Invent. Math. \textbf{165} (2006),
  no.~3, 633--649.

\bibitem[M{\"o}l06b]{M}
\bysame, \emph{Variations of {H}odge structures of a {T}eichm\"uller curve}, J.
  Amer. Math. Soc. \textbf{19} (2006), no.~2, 327--344 (electronic).

\bibitem[M{\"o}l08]{M3}
\bysame, \emph{Finiteness results for {T}eichm\"uller curves}, Ann. Inst.
  Fourier (Grenoble) \textbf{58} (2008), no.~1, 63--83.

\bibitem[MT02]{MT}
Howard Masur and Serge Tabachnikov, \emph{Rational billiards and flat
  structures}, Handbook of dynamical systems, {V}ol.\ 1{A}, North-Holland,
  Amsterdam, 2002, pp.~1015--1089.

\bibitem[MW]{MW}
Carlos Matheus and Alex Wright, \emph{Hodge-{T}eichm{\"u}ller planes and
  finiteness results for {T}eichm{\"u}ller curves}, preprint, arXiv 1308.0832
  (2013).

\bibitem[Ngu11]{N}
Duc-Manh Nguyen, \emph{Parallelogram decompositions and generic surfaces in {$
  H^{\rm hyp}(4)$}}, Geom. Topol. \textbf{15} (2011), no.~3, 1707--1747.

\bibitem[NW]{NW}
Duc-Manh Nguyen and Alex Wright, \emph{Non-{V}eech surfaces in
  $\mathcal{H}^{\rm hyp}(4)$ are generic}, preprint, arXiv:1306.4922 (2013).

\bibitem[SW04]{SW2}
John Smillie and Barak Weiss, \emph{Minimal sets for flows on moduli space},
  Israel J. Math. \textbf{142} (2004), 249--260.

\bibitem[Vee82]{V2}
William~A. Veech, \emph{Gauss measures for transformations on the space of
  interval exchange maps}, Ann. of Math. (2) \textbf{115} (1982), no.~1,
  201--242.

\bibitem[Vee89]{V}
W.~A. Veech, \emph{Teichm\"uller curves in moduli space, {E}isenstein series
  and an application to triangular billiards}, Invent. Math. \textbf{97}
  (1989), no.~3, 553--583.

\bibitem[War98]{W}
Clayton~C. Ward, \emph{Calculation of {F}uchsian groups associated to billiards
  in a rational triangle}, Ergodic Theory Dynam. Systems \textbf{18} (1998),
  no.~4, 1019--1042.

\bibitem[Wria]{Wcyl}
Alex Wright, \emph{Cylinder deformations in orbit closures of translation
  surfaces}, preprint, arXiv 1302.4108 (2013).

\bibitem[Wrib]{Wfield}
\bysame, \emph{The field of definition of affine invariant submanifolds of the
  moduli space of abelian differentials}, preprint, arXiv 1210.4806 (2012), to
  appear in Geom. Top.

\bibitem[Wri13]{W2}
\bysame, \emph{Schwarz triangle mappings and {T}eichm\"uller curves: the
  {V}eech-{W}ard-{B}ouw-{M}\"oller curves}, Geom. Funct. Anal. \textbf{23}
  (2013), no.~2, 776--809.

\bibitem[Zor06]{Z}
Anton Zorich, \emph{Flat surfaces}, Frontiers in number theory, physics, and
  geometry. {I}, Springer, Berlin, 2006, pp.~437--583.

\end{thebibliography}
\bibliographystyle{amsalpha}
\end{document}